\newtheorem{theorem}{Theorem}[section]
\newtheorem{corollary}[theorem]{Corollary}
\newtheorem{lemma}[theorem]{Lemma}
\newtheorem{proposition}[theorem]{Proposition}
\theoremstyle{definition}
\newtheorem{definition}[theorem]{Definition}
\newtheorem{remark}[theorem]{Remark}
\newtheorem{example}[theorem]{Example}
\begin{document}
	
	\baselineskip=17pt

	\title{ On Diagonal bimodules of \'{e}tale groupoid $C^*$-algebras}

	\author[R. Liu]{Rui Liu}
	\address{School of Mathematical Science, Yangzhou University\\
		Yangzhou 225002, China}
	\email{liurui20240048@163.com}
	
	\author[X. Q. Qiang]{Xiangqi Qiang}
	\address{School of Science, Jiangsu University of Science and Technology\\ Zhenjiang 212100, China}
	\email{xq.qiang@just.edu.cn}
	
	\author[C. J. Hou]{Chengjun Hou}
	\address{School of Mathematical Science, Yangzhou University\\
		Yangzhou 225002, China}
	\email{cjhou@yzu.edu.cn}
	
	% corresponding author
	%\corraddr{Email address of corresponding author: cjhou@yzu.edu.cn. }

	\begin{abstract}
		
		We study diagonal bimodules of \'{e}tale groupoid $C^*$-algebras over their canonical diagonal subalgebras, and establish necessary and sufficient conditions for such a bimodule to be spectral-that is, determined by its spectrum. For a class of $\Gamma$-graded \'{e}tale groupoids, we prove  that the spectrality of diagonal bimodules is  equivalent to  their invariance under the action of the dual group $\widehat{\Gamma}$ in the abelian case, or under the coaction of $\Gamma$ in the nonabelian case, on the groupoid $C^*$-algebras,  both of which are induced by the underlying cocycle. This framework covers transformation groupoids arising from homeomorphism actions of countable groups, as well as from local homeomorphism actions of Ore semigroups. As applications, we characterize the spectrality of closed two-sided ideals and subalgebras that contain the diagonal subalgebra of \'{e}tale groupoid $C^*$-algebras.
	\end{abstract}
	
	\keywords{ groupoids, $C^*$-algebras, bimodules
		\newline
		\textbf{Mathematics Subject Classification:} 47L40}
	
	\maketitle
	\section{Introduction}

	Many $C^*$-algebras, including the reduced crossed products of commutative $C^*$-algebras by discrete groups, graph $C^*$-algebras and higher-rank graph $C^*$-algebras, can be realized as the reduced $C^*$-algebras of \'{e}tale groupoids. Such a realization is often referred to as a coordinate description of the $C^*$-algebra and serves as an important tool for studying $C^*$-algebras, their subalgebras, ideals, and more general bimodules over canonical subalgebras. The aim of this paper is to study the bimodule structure of the reduced $C^*$-algebras of \'{e}tale groupoids over their diagonal subalgebras.
	
	The algebra $C_0(\mathcal{G}^{(0)})$ of all continuous functions vanishing at infinity on the unit space $\mathcal{G}^{(0)}$ of an \'{e}tale groupoid $\mathcal{G}$ embeds naturally into the reduced groupoid $C^*$-algebra $C_r^*(\mathcal{G})$, and is referred  as the diagonal subalgebra. A closed subspace $M$ of $C_r^*(\mathcal{G})$ is called a diagonal bimodule if $aMb\subseteq C_r^*(\mathcal{G})$ for all $a,b\in C_0(\mathcal{G}^{(0)})$. Clearly, each closed subalgebra of $C_r^*(\mathcal{G})$ containing the diagonal subalgebra, as well as each closed two-sided ideal of $C_r^*(\mathcal{G})$, is a diagonal bimodule. As every element in $C_r^*(\mathcal{G})$ can be viewed as a continuous function on $\mathcal{G}$ (\cite{Ren}), the spectrum $\sigma(M)$ of a diagonal bimodule $M$ is defined as the union of the open supports of all its elements. For an open subset $P\subseteq \mathcal{G}$, one can construct a diagonal bimodule $A(P)$. We refer to a bimodule of this form as a spectral bimodule. This naturally leads to the question of whether every diagonal bimodule is spectral.
	
	In \cite{MS}, Muhly and Solel proved that every diagonal bimodule of $C_r^*(\mathcal{G})$ is spectral when $\mathcal{G}$ is amenable, principal and ample. Later, Yan, Chen and Xu in (\cite{YCX}) extended this result by removing the ampleness assumption. This result, known as the spectral theorem for diagonal bimodules, serves as a fundamental tool in the study of (non-self-adjoint) subalgebras of groupoid $C^*$-algebras.

	However, when $\mathcal{G}$ is not principal, it is no longer true that every diagonal bimodule is spectral. For instance, if $\mathcal{G}$ is a nontrivial countable group, a diagonal bimodule is simply a closed subspace of the group algebra $C_r^*(\mathcal{G})$,
	and one can construct examples that are not spectral. In fact, the spectral theorem for diagonal bimodules fully characterizes the principality of groupoids: if every diagonal bimodule of $C_r^*(\mathcal{G})$ is determined by its spectrum, then $\mathcal{G}$
	must be principal.
	Related work includes \cite{HP}, which characterizes diagonal bimodules of the  $C^*$-algebra arising from the Cuntz groupoid, showing that they coincide with subspaces invariant under the gauge automorphisms and are generated by the Cuntz partial isometries they contain. In  \cite{HPP}, it is shown that for the groupoid $C^*$-algebra of a range-finite directed graph with no sources, a diagonal bimodule is spectral if and only if it is gauge-invariant. Later, this result is generalized to the higher rank graphes in \cite{Hop}.  Moreover, \cite[Theorem 5.7]{BEFPR} establishes the spectral theorem for a class of non-amenable principal groupoids defined by free actions of countable groups with certain approximation properties on compact spaces.

	The study of the spectral theorem for diagonal bimodules of principal \'{e}tale groupoid $C^*$-algebras revolves around two fundamental problems. The first is whether every element  $a\in C_r^*(\mathcal{G})$ can be approximated in the reduced norm by continuous functions compactly supported on its open support. The second concerns the characterization of elements in a diagonal bimodule whose open supports are bisections. We say that an \'{e}tale groupoid satisfies the Fourier coefficients approximation property if it fulfills the above approximation condition. This class of groupoids includes amenable groupoids, as well as transformation groupoids arising from homeomorphism actions of countable groups with certain approximation properties (\cite{BC,CN,FK}).
	
	In this paper, we characterize diagonal bimodules of groupoid $C^*$-algebras and establish necessary and sufficient conditions for such a bimodule to be spectral. In particular, we prove that for an \'{e}tale groupoid with the Fourier coefficients approximation property, a diagonal bimodule is spectral if and only if it is a closed linear span of all elements in it whose open supports are bisections. As a corollary, we  obtain that for a principal \'{e}tale groupoid satisfying the approximation property, each diagonal bimodule is spectral. The extends the applicability of the spectral theorem for bimodules of amenable principal groupoid $C^*$-algebras. We also prove that a two-sided ideal of $C_r^*(\mathcal{G})$ is spectral as a diagonal bimodule if and only if it is a dynamical ideal in the sense of  \cite{BCS}. Moreover, for topologically principal amenable \'{e}tale groupoids, we establish a one-to-one correspondence between open wide subgroupoids of $\mathcal{G}$ and $C^*$-subalgebras of $C_r^*(\mathcal{G})$  that contain $C_0^*(\mathcal{G}^{(0)})$ as a Cartan sualgebra, via the mapping $H\rightarrow A(H)$.
	
	When an \'{e}tale groupoid $\mathcal{G}$ admits a continuous cocycle $c$ into a countable group $\Gamma$, it is known that $c$ induces either an automorphism action of the dual group $\widehat{\Gamma}$ on  $C_r^*(\mathcal{G})$ when $\Gamma$ is abelian, or a coaction of $\Gamma$ on $C_r^*(\mathcal{G})$ when
	$\Gamma$  is nonabelian. In this setting, we show that a diagonal bimodule is spectral precisely when it is invariant under the canonical action of $\widehat{\Gamma}$  or the coaction of $\Gamma$, accordingly. Examples of such graded groupoids are also provided, including transformation groupoids arising from homeomorphism actions of countable groups, as well as from local homeomorphism actions of Ore semigroups. In particular, for transformation groupoid $C^*$-algebras associated to homeomorphism actions $\beta$ of  countable groups $\Gamma$, which  are either weakly amenable or satisfy the Haagerup-Kraus approximation property,  on locally compact Hausdorff spaces $X$, we show that closed two-sided ideals of the corresponding $C^*$-algebras are invariant under the actions of  dual group $\hat{\Gamma}$ or coactions of $\Gamma$ induced by the canonical $1$-cocycles  if and only if they are  generated by $C_0(U)$  for some $\beta$-invariant open subsets $U$ of $X$.
	
	The paper is organized as follows. In Section 2, we review preliminaries on \'{e}tale groupoids and groupoid $C^*$-algebras. Section 3 provides sufficient conditions under which diagonal bimodules of the reduced groupoid C*-algebras are determined by their spectra. In Section 4, we study the spectral theorem for diagonal bimodules in the setting of $\Gamma$-graded groupoids.

	\section{Preliminaries}

	In this paper, by an \'{e}tale groupoid we mean a second countable, locally compact, Hausdorff and topological groupoid. For the relevant notions and theories on \'{e}tale groupoids and their $C^*$-algebras, we refer to \cite{Ren, Sim}.
	
	For a groupoid $\mathcal{G}$, let $\mathcal{G}^{(0)}$ and $\mathcal{G}^{(2)}$ be its unit space and the set of composable pairs, respectively. The range and source maps from $\mathcal{G}$ to $\mathcal{G}^{(0)}$ will be denoted by $r$ and $s$, respectively. The inverse map from $\mathcal{G}$ onto itself is written by $\gamma\rightarrow \gamma^{-1}$ and the product map from $\mathcal{G}^{(2)}$ into $\mathcal{G}$ is written by $(\gamma,\gamma')\rightarrow \gamma\gamma'$. In the topological setting, we assume that $\mathcal{G}$ is a second countable, locally compact and Hausdorff topology space and that the structure maps are continuous, where $\mathcal{G}^{(2)}$ and $\mathcal{G}^{(0)}$ have the relative topology of $\mathcal{G}\times \mathcal{G}$ and $\mathcal{G}$, respectively. A topological groupoid is called to be \'{e}tale if its range and source maps are local homeomorphisms from $\mathcal{G}$ onto $\mathcal{G}^{(0)}$. A bisection is a subset $B$ of $\mathcal{G}$ such that both $r$ and $s$ restrict to injective maps on $B$. For $U\subset \mathcal{G}^{(0)}$, we let $\mathcal{G}_U=s^{-1}(U)$, $\mathcal{G}^U=r^{-1}(U)$ and $\mathcal{G}|_U=\mathcal{G}_U\cap \mathcal{G}^U$. If $\mathcal{G}$ is \'{e}tale, then $\mathcal{G}^{(0)}$ is a closed and open subset of $\mathcal{G}$, and $\mathcal{G}$ has a basis consisting of open bisections, and the range fibre $\mathcal{G}^u$ and the source fibre $\mathcal{G}_u$ over a unit $u\in \mathcal{G}^{(0)}$ are discrete in the relative topology. In particular, the isotropy group over  the unit $u$ given as $\mathcal{G}^u_u:=\mathcal{G}^u\cap \mathcal{G}_u$ is a discrete group. A subset $U$ of $\mathcal{G}^{(0)}$ is said to be invariant if $U=r(\mathcal{G}_U)$.
	
	Denote by $\mathcal{G}'=\cup_{u\in \mathcal{G}^{(0)}}\mathcal{G}_u^u$ the isotropy bundle of $\mathcal{G}$. An \'{e}tale groupoid  $\mathcal{G}$ is called \emph{principal} if $\mathcal{G}'=\mathcal{G}^{(0)}$, and is called \emph{topologically principal} if $\{u\in \mathcal{G}^{(0)}:\,\, \mathcal{G}^u_u=\{u\}\}$ is dense in $\mathcal{G}^{(0)}$. From \cite{Ren08}(or \cite[Lemma 3.3]{BCFC}), $\mathcal{G}$ is topologically principal if and only if the interior, $int(\mathcal{G}')$, of $\mathcal{G}'$ is equal to $\mathcal{G}^{(0)}$. Countable discrete groups, \'{e}tale equivalence relations on topological spaces, and transformation groupoids from dynamical systems all constitute important examples of \'{e}tale groupoids. A transformation groupoid arising from a homeomorphism action of a countable group on a  second countable, locally compact and Hausdorff space is principal (respectively, topologically principal) if and only if the action is free (respectively, topologically free).

	Given an \'{e}tale groupoid $\mathcal{G}$, the linear space, $C_c(\mathcal{G})$, of all continuous complex functions on $\mathcal{G}$ with compact supports is a $\ast$-algebra under the operations: $f^{\ast}(\gamma)=\overline{f(\gamma^{-1})}$ and $(f
	g)(\gamma)=\sum_{\gamma'\in \mathcal{G}_{s(\gamma)}}f(\gamma\gamma'^{-1})g(\gamma')$ for $f,g\in C_c(\mathcal{G})$ and $\gamma\in \mathcal{G}$.
	For each $u\in \mathcal{G}^{(0)}$, there is a $\ast$-representation $Ind_{u}$ of $C_c(\mathcal{G})$ on the Hilbert space $l^2(\mathcal{G}_u)$ of square summable functions on $\mathcal{G}_u$  by
	$Ind_{u}(f)(\xi)(\gamma)=\sum_{\gamma'\in \mathcal{G}_u}f(\gamma\gamma'^{-1})\xi(\gamma')$  for $f\in C_c(\mathcal{G})$, $\xi\in l^2(\mathcal{G}_u)$ and $\gamma\in \mathcal{G}_u$. The reduced $C^*$-algebra $C_r^*(\mathcal{G})$ of $\mathcal{G}$ is the completion of $C_c(\mathcal{G})$ with respect to the norm $\|f\|_{\operatorname{red}}=\sup_{u\in \mathcal{G}^{(0)}}\|Ind_{u}(f)\|$ for $f\in C_c(\mathcal{G})$.  Since $\mathcal{G}^{(0)}$ is clopen in $\mathcal{G}$, $C_c(\mathcal{G}^{(0)})$ is contained in $C_c(\mathcal{G})$ in the canonical way, and this extends to an injection $C_0(\mathcal{G}^{(0)})\hookrightarrow C_r^*(\mathcal{G})$. Thus  $C_0(\mathcal{G}^{(0)})$ is regarded as a $C^*$-subalgebra of  $C_r^*(\mathcal{G})$ and is referred as the diagonal subalgebra of $C_r^*(\mathcal{G})$.

	\section{diagonal bimodules of groupoid $C^*$-algebras}
	
	For an \'{e}tale groupoid $\mathcal{G}$, it follows from \cite{Ren,Sim} that there exists a norm-decreasing,  injective linear map $j$ from $C_r^*(\mathcal{G})$ into the abelian $C^*$-algebra $C_0(\mathcal{G})$ of all complex continuous functions on $\mathcal{G}$ vanishing at infinity,  such that
	$$j(ab)(\gamma)=\sum\limits_{\sigma\in \mathcal{G}_{s(\gamma)}}j(a)(\gamma\sigma^{-1})j(b)(\sigma) \mbox{ and } j(a^*)(\gamma)=\overline{j(a)(\gamma^{-1})}$$
	for all $a,b\in C_r^*(\mathcal{G})$ and $\gamma\in \mathcal{G}$.
	Moreover, $j(f)=f$ for $f\in C_c(\mathcal{G})$ or $f\in C_0(\mathcal{G}^{(0)})$.  For any complex function $f$ on $\mathcal{G}$, let $\overline{f}$ denote  its  complex conjugate, defined by $\overline{f}(\gamma)=\overline{f(\gamma)}$ for $\gamma\in \mathcal{G}$. Then the map $f \mapsto \overline{f}$ on $C_c(\mathcal{G})$ extends to   an isometric isomorphism  $a \mapsto  \overline{a}$ on $ C_r^*(\mathcal{G})$ such that $j(\overline{a})(\gamma)=\overline{j(a)(\gamma)}$ for all $a\in C_r^*(\mathcal{G})$ and $\gamma\in \mathcal{G}$ (\cite{YCX}). We call $\overline{a}$ the complex conjugate of $a$ in $C_r^*(\mathcal{G})$.
	By identifying each $a\in C_r^*(\mathcal{G})$ with $j(a)\in C_0(\mathcal{G})$, $C_r^*(\mathcal{G})$ is regarded as a subspace of $C_0(\mathcal{G})$.  For each $a\in C_r^*(\mathcal{G})$, define its open support as $\text{supp}'(a)=\{\gamma\in \mathcal{G}: a(\gamma)\neq 0\}$ and its support, $\text{supp}(a)$, as the closure of $\text{supp}'(a)$ in $\mathcal{G}$.
	
	As before, we denote by $ab$  the (convolution) multiplication in $C_c(\mathcal{G})$ and $C_r^*(\mathcal{G})$, and denote by $a\cdot b$  the point-wise multiplication in $C_0(\mathcal{G})$ , i.e., $(a\cdot b)(\gamma)=a(\gamma)b(\gamma)$. For a nonempty open subset $U$ of a locally compact, Hausdorff space $X$, we always assume that $C_c(U)$ is contained in $C_c(X)$ in the canonical way.

	\begin{remark} When $\mathcal{G}$ is a countably infinite  discrete group with   identity $e$, let $L(\mathcal{G})$ be the group von Neumann algebra generated by the regular representation $\lambda$ of $\mathcal{G}$ on $\ell^2(\mathcal{G})$,  and let $C_r^*(\mathcal{G})$ the reduced group $C^*$-algebra generated by $\lambda$. For $a\in L(\mathcal{G})$, define $\widehat{a}=a(\delta_e)\in \ell^2(\mathcal{G})$. Then $a\in L(\mathcal{G})\mapsto \widehat{a}\in \ell^2(\mathcal{G})$ is injective. In this case, we have $j(a)=\widehat{a}$ and $j(a)(g)=\langle a(\delta_e),\delta_g\rangle$ for all $a\in L(\mathcal{G})$ and $g\in G$, where $\{\delta_g:g\in \mathcal{G}\}$ is the canonical orthogonal basis of $\ell^2(\mathcal{G})$. \end{remark}

	\begin{definition} A norm closed subspace $M$ of $C_r^*(\mathcal{G})$ is called a bimodule over $C_0(\mathcal{G}^{(0)})$, if $ab,ba\in M$ for every $a\in M$ and $b\in C_0(\mathcal{G}^{(0)})$. For convenience, $M$ is referred to as a diagonal bimodule (of $C_r^*(\mathcal{G})$).
	\end{definition}

	\begin{definition} For a nonzero diagonal bimodule  $M$ of $C_r^*(\mathcal{G})$ and a nontrivial open subset $P$ of $\mathcal{G}$, we define
		$$\sigma(M):=\{\gamma\in \mathcal{G}:\,  a(\gamma)\neq 0\, \mbox{ for some $a\in M$}\},\quad  \hbox{ and }$$
		$$A(P):=\{a\in C_r^*(\mathcal{G}):\, a(\gamma)=0 \mbox{ for all $\gamma\notin P$}\}.$$
		For convenience, we let $\sigma(\{0\})=\emptyset$,  $A(\emptyset)=\{0\}$ and $A(\mathcal{G})=C_r^*(\mathcal{G})$.
	\end{definition}
	Clearly, each $A(P)$ is a diagonal bimodule of $C_r^*(\mathcal{G})$, and $\sigma(M)$ is an open subset of $\mathcal{G}$, called the \emph{spectrum}  for $M$. Also, $(M\cup C_c(\sigma(M))\subseteq A(\sigma(M))$ and $A(P)$ is closed under the complex conjugate. The following observation is a direct result of the definition.
	
	\begin{proposition}\label{p3} For each open subset $P$ of $\mathcal{G}$, we have
		$$P=\sigma(A(P)).$$
	\end{proposition}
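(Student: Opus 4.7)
The plan is to prove both inclusions of the set equality $P = \sigma(A(P))$ directly from the definitions, exploiting the fact that $C_c(\mathcal{G})$ sits inside $C_r^*(\mathcal{G})$ with $j(f) = f$.

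For the inclusion $\sigma(A(P)) \subseteq P$, I would unfold the definitions. If $\gamma \in \sigma(A(P))$, then there exists some $a \in A(P)$ with $a(\gamma) \neq 0$. By the definition of $A(P)$, every element $a \in A(P)$ vanishes off $P$, so $\gamma \in P$. This direction requires no analytic input.

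For the reverse inclusion $P \subseteq \sigma(A(P))$, I would fix $\gamma \in P$ and produce a single element of $A(P)$ that does not vanish at $\gamma$. Since $\mathcal{G}$ is locally compact Hausdorff and $P$ is open, I can choose an open neighborhood $V$ of $\gamma$ with compact closure $\overline{V} \subseteq P$, and then apply the Urysohn-type lemma for locally compact Hausdorff spaces to produce $f \in C_c(\mathcal{G})$ with $f(\gamma) = 1$ and $\mathrm{supp}(f) \subseteq \overline{V} \subseteq P$. Then $f \in C_c(\mathcal{G}) \subseteq C_r^*(\mathcal{G})$, and since $j(f) = f$ by the property recalled at the start of Section~3, we have $f(\gamma') = 0$ for all $\gamma' \notin P$. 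Hence $f \in A(P)$ and $f(\gamma) \neq 0$, giving $\gamma \in \sigma(A(P))$.

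There is no real obstacle here: the only point worth flagging is the (automatic) identification of the compactly supported continuous function $f$ with its image $j(f) \in C_r^*(\mathcal{G})$, since $\sigma$ and $A(P)$ are defined via the representation of elements of $C_r^*(\mathcal{G})$ as continuous functions on $\mathcal{G}$. Once this identification is made, the argument is a two-line application of Urysohn's lemma together with the definitions.
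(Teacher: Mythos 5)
Your proof is correct and follows essentially the same route as the paper: the inclusion $\sigma(A(P))\subseteq P$ is definitional, and $P\subseteq\sigma(A(P))$ is obtained by producing $f\in C_c(P)\subseteq A(P)$ with $f(\gamma)=1$ (the paper simply writes ``choose $f\in C_c(P)$,'' leaving the Urysohn construction you spell out implicit).
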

	
	\begin{proof} Assume that $P$ is a nontrivial open subset of $\mathcal{G}$. For $\gamma\in P$, choose $f\in C_c(P)$ such that $f(\gamma)=1$. Thus $f\in A(P)$, so $\gamma\in \sigma(A(P))$. On the other hand, for $\gamma\in \sigma(A(P))$, choose $a\in A(P)$ such that $a(\gamma)\neq 0$. Then $\gamma\in P$, for otherwise, $a(\gamma)=0$, which is a  contradiction. Hence $P=\sigma(A(P))$.\end{proof}

	This proposition indicates that every open subset $P$ of $\mathcal{G}$ is uniquely characterized by the diagonal bimodule $A(P)$ associated with $P$, and the map, $P\mapsto A(P)$, from the family of open subsets of $\mathcal{G}$ into the diagonal bimodules of $C_r^*(\mathcal{G})$, is injective. This leads us to consider an analogous question for a diagonal bimodule $M$ of $C_r^*(\mathcal{G})$,  namely, whether $M$ can be similarly determined by its spectrum $\sigma(M)$, that is, whether the equality $M=A(\sigma(M))$ holds. In the case of amenable principal groupoids, the answer to this question is affirmative. The following theorem is drawn from \cite{MS,YCX}.
	
	\begin{theorem} {\rm (The spectral theorem for bimodules)} Let $\mathcal{G}$ be an amenable principal groupoid. Then the map $P\mapsto A(P)$ defines a one-to-one correspondence between open subsets of $\mathcal{G}$ and diagonal bimodules of $C_r^*(\mathcal{G})$.
	\end{theorem}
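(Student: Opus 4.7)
The plan is to establish the bijection by showing that for every diagonal bimodule $M$ one has $M = A(\sigma(M))$; together with Proposition \ref{p3} this gives both injectivity and surjectivity of $P \mapsto A(P)$. The inclusion $M \subseteq A(\sigma(M))$ is immediate from the definition of $\sigma(M)$, so the entire content lies in the reverse inclusion $A(\sigma(M)) \subseteq M$. Throughout, I will use the basic pointwise identities $(ba)(\gamma) = b(r(\gamma))a(\gamma)$ and $(ac)(\gamma) = a(\gamma)c(s(\gamma))$ for $a \in C_r^*(\mathcal{G})$ and $b,c \in C_0(\mathcal{G}^{(0)})$, which are direct consequences of the convolution formula since $b,c$ are supported on the unit space.

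The first main step uses amenability. I would show that every $a \in A(\sigma(M))$ can be approximated in reduced norm by elements $f \in C_c(\mathcal{G})$ whose (compact) supports lie inside $\sigma(M)$; this is the Fourier coefficients approximation property alluded to later in the paper, which is known to hold for amenable étale groupoids. Since both $M$ and $A(\sigma(M))$ are norm-closed, it thus suffices to show that every such $f$ lies in $M$. Using that $\mathcal{G}$ is étale, I cover the compact set $\operatorname{supp}(f) \subseteq \sigma(M)$ by finitely many open bisections $B_1,\dots,B_n$ with each $B_i \subseteq \sigma(M)$, and use a partition of unity to write $f = \sum_i g_i$ with $g_i \in C_c(B_i)$. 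The problem is thus reduced to showing $g \in M$ whenever $g \in C_c(B)$ for some open bisection $B \subseteq \sigma(M)$.

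The second main step, which I expect to be the hardest, is this localization claim, and it is where principality is essential. For each $\gamma \in B$, by definition of $\sigma(M)$ there is some $a_\gamma \in M$ with $a_\gamma(\gamma) \neq 0$. I would now use the principality hypothesis $\mathcal{G}' = \mathcal{G}^{(0)}$: this implies that the map $(r,s)\colon \mathcal{G} \to \mathcal{G}^{(0)} \times \mathcal{G}^{(0)}$ is injective, so distinct groupoid elements are separated by sufficiently small neighborhoods of their range/source units. Combining this with the étale structure, I can choose bump functions $b_\gamma, c_\gamma \in C_0(\mathcal{G}^{(0)})$ supported near $r(\gamma)$ and $s(\gamma)$ respectively such that $b_\gamma a_\gamma c_\gamma$ lies in $M$, is supported in an arbitrarily small bisection neighborhood of $\gamma$ inside $B$, and is nonzero at $\gamma$. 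The decisive point is that without principality the product $b_\gamma a_\gamma c_\gamma$ may still have support meeting nontrivial isotropy, so one could not confine it inside $B$; the principal hypothesis is exactly what rules this out.

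Finally, I would assemble these localized elements into $g$. Covering the compact set $\operatorname{supp}(g) \subseteq B$ by finitely many of the bisection neighborhoods produced above and taking a partition of unity $\{\phi_j\}$ on $\operatorname{supp}(g)$ subordinate to this cover, I would express each $\phi_j \cdot g$, in view of the pointwise formulas, as a suitable $C_0(\mathcal{G}^{(0)})$-bimodule combination of one of the localized elements in $M$. Summing over $j$ yields $g \in M$, completing the argument. The main obstacle is clearly the localization step: producing elements of $M$ with support confined to an arbitrarily small piece of a bisection requires both the principality hypothesis to prevent spreading across isotropy and a careful use of the étale local structure to ensure the required neighborhoods exist.
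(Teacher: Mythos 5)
Your step one is fine and matches the paper's own machinery: amenability gives the density of $C_c(P)$ in $A(P)$ in the reduced norm (this is Theorem \ref{th0}, proved via the multiplier maps $a\mapsto(\xi_n^*\xi_n)\cdot a$ from \cite{BO}), and the reduction by partitions of unity to a single $g\in C_c(B)$, $B$ an open bisection inside $\sigma(M)$, is also how the paper proceeds. The genuine gap is your localization step. For $b_\gamma,c_\gamma\in C_0(\mathcal{G}^{(0)})$ one has $(b_\gamma a_\gamma c_\gamma)(\tau)=b_\gamma(r(\tau))\,a_\gamma(\tau)\,c_\gamma(s(\tau))$, so the support of $b_\gamma a_\gamma c_\gamma$ is $\mathrm{supp}'(a_\gamma)\cap r^{-1}(\mathrm{supp}\,b_\gamma)\cap s^{-1}(\mathrm{supp}\,c_\gamma)$, and principality does \emph{not} confine this to a small neighborhood of $\gamma$: injectivity of $(r,s)$ is not properness, and $(r,s)$ need not be a homeomorphism onto its image. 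Concretely, in the transformation groupoid of a free minimal action (say an irrational rotation $\mathbb{Z}\curvearrowright\mathbb{T}$, which is principal), for \emph{any} neighborhoods $V$ of $r(\gamma)$ and $W$ of $s(\gamma)$ there are infinitely many group elements $n$ with $T^{n}(W)\cap V\neq\emptyset$, so $b_\gamma a_\gamma c_\gamma$ can be nonzero on infinitely many sheets far from $\gamma$. Your remark that principality ``rules out spreading across isotropy'' misdiagnoses the obstruction: principality only excludes $\tau\neq\gamma$ with $(r(\tau),s(\tau))$ \emph{equal} to $(r(\gamma),s(\gamma))$, whereas compression by diagonal elements must also kill the elements with merely \emph{nearby} range--source pairs, and those it cannot see.

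The natural repair---truncate $a_\gamma$ to the compact set $K_\varepsilon=\{|a_\gamma|\geq\varepsilon\}$, on which $(r,s)$ \emph{is} a homeomorphism onto its image, so that compression does localize up to an error of sup-norm at most $\varepsilon$---also fails, because the error term is not supported in a bisection and the reduced norm dominates the sup norm rather than being controlled by it; there is no reduced-norm bound on the discarded part. This is exactly the analytic crux of the theorem, and it is why the original proofs of Muhly--Solel \cite{MS} and Yan--Chen--Xu \cite{YCX} resort to the bipolar theorem and groupoid representation theory. Note also that the paper does not reprove this statement: it is imported from \cite{MS,YCX}, and within the paper's own framework it is recovered from Theorem \ref{th0} together with Theorem \ref{p1}, where the role of your localization step is played by the pointwise multiplier property ($f\cdot a\in M$ for $f\in C_c(B)$ and $a\in M$, which is \cite[Proposition 2.1 (ii)]{YCX} and is precisely what principality buys) followed by a Stone--Weierstrass argument showing $C_0(U)\cap M=C_0(U)$ for bisections $U\subseteq\sigma(M)$. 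That multiplier property is the nontrivial input your proposal would need, and it cannot be obtained by naive diagonal compression.
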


	However, the above theorem fails to hold when $\mathcal{G}$ is non-principal, as demonstrated by a counterexample in \cite{HP}. In the context of countable discrete groups, where a diagonal bimodule is simply a closed subspace of $C_r^*(\mathcal{G})$, one can construct a counterexample involving a proper diagonal bimodule $M$ whose spectrum is the entire group $\mathcal{G}$.
	
	\begin{example}\label{ex1}Let $G$ be a countably infinite discrete group.   For $g,h\in G$ with $g\neq h$, let $M=\mathbb{C}(\lambda_g+\lambda_h)$ be the one-dimensional subspace of $C_r^*(G)$. Then $\sigma(M)=\{g,h\}$, and $A(\sigma(M))=\mathbb{C}\lambda_g+\mathbb{C}\lambda_h$ is a two-dimensional subspace containing $M$ properly. %( Remark: $C(\{g\})\cap M=\{0\}$ and $C(\sigma(M))=A(\sigma(M))$.)
	\end{example}
	
	\begin{example} Let $M:=\{a\in C_r^*(\mathbb{Z}): a(n)=a(-n) \mbox{ for all $n\in \mathbb{Z}$}\}$. Then it is the fixed point algebra of the action of $\mathbb{Z}$ on $C_r^*(\mathbb{Z})$  given by the automorphism,  $n\mapsto -n$ of $\mathbb{Z}$. Thus $M$ is a $C^*$-subalgebra of $C_r^*(\mathbb{Z})$ and $\sigma(M)=\mathbb{Z}$. Clearly, $A(\sigma(M))=C_r^*(\mathbb{Z})$ contains properly $M$.
	\end{example}

	The following proposition demonstrates that for an amenable \'{e}tale groupoid, principality is equivalent to the condition that every diagonal bimodule is determined by its spectrum. In \cite{HP}, a bimodule of the Cuntz groupoid $C^*$-algebra over $A(D_0)$ that is determined by its spectrum is referred to as reflexive. For convenience, we introduce the following definition.
	
	\begin{definition}
		A diagonal bimodule $M$ of $C_r^*(\mathcal{G})$  is said to be spectral if $M=A(\sigma(M))$.
	\end{definition}
	
	\begin{proposition} Let $\mathcal{G}$ be a non-principal  \'etale groupoid. Then there is a non-spectral diagonal bimodule of $C_r^*(\mathcal{G})$.
	\end{proposition}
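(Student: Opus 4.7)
The plan is to mimic Example~\ref{ex1} by constructing, from a single generator $f+g$, a diagonal bimodule $M$ whose elements are forced to coincide at two distinct points of $\mathcal{G}$, while $A(\sigma(M))$ is free to separate those points. The nontrivial isotropy furnished by non-principality is what permits this collision.

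First, since $\mathcal{G}$ is not principal, I pick $u\in\mathcal{G}^{(0)}$ and $\gamma\in\mathcal{G}_u^u$ with $\gamma\neq u$. Because $\mathcal{G}^{(0)}$ is clopen and $\mathcal{G}$ admits a basis of open bisections, I choose an open bisection $B\ni\gamma$ with $B\cap\mathcal{G}^{(0)}=\emptyset$; using continuity of $r,s$ at $\gamma$ together with $r(\gamma)=s(\gamma)=u$, I shrink $B$ and select an open neighborhood $U\subseteq\mathcal{G}^{(0)}$ of $u$ with $r(B)\cup s(B)\subseteq U$. I then fix $f\in C_c(B)$ with $f(\gamma)=1$ and $g\in C_c(U)$ with $g(u)=1$, and let $M$ denote the closed $C_0(\mathcal{G}^{(0)})$-sub-bimodule of $C_r^*(\mathcal{G})$ generated by $f+g$.

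The pivotal computation is that for $\phi,\psi\in C_0(\mathcal{G}^{(0)})$, the convolution collapses (because $\mathcal{G}^{(0)}$ is clopen and only the unit $r(\eta)$, resp.\ $s(\eta)$, contributes to the sum) to
\[
\bigl(\phi\,(f+g)\,\psi\bigr)(\eta)=\phi\bigl(r(\eta)\bigr)\,(f+g)(\eta)\,\psi\bigl(s(\eta)\bigr).
\]
Evaluating at $\eta=\gamma$ and $\eta=u$, the identities $r(\gamma)=s(\gamma)=u$, $f(\gamma)=g(u)=1$, and $f(u)=g(\gamma)=0$ both yield the common value $\phi(u)\psi(u)$. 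Since pointwise evaluation $a\mapsto a(\eta)$ is continuous in the reduced norm (via the norm-decreasing inclusion $j\colon C_r^*(\mathcal{G})\hookrightarrow C_0(\mathcal{G})$), passing to closed linear combinations shows that every $a\in M$ satisfies $a(\gamma)=a(u)$.

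To conclude, I observe that $\sigma(M)\supseteq\text{supp}'(f+g)\supseteq\text{supp}'(g)$, an open neighborhood of $u$ in $\mathcal{G}^{(0)}$. Choosing any $\phi\in C_c(\text{supp}'(g))$ with $\phi(u)=1$, I have $\phi\in A(\sigma(M))$, whereas $\phi(\gamma)=0\neq 1=\phi(u)$ forces $\phi\notin M$. Hence $M\subsetneq A(\sigma(M))$, so $M$ is non-spectral. The main technical point is the opening separation argument producing the bisection $B$ with $r(B),s(B)\subseteq U$; once that is arranged, the remaining steps reduce to the convolution identity above and a direct pointwise check.
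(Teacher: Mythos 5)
Your proof is correct, and it rests on exactly the same pivot as the paper's: a non-unit isotropy element $\gamma$ with $r(\gamma)=s(\gamma)=u$ forces left and right multiplication by $\phi,\psi\in C_0(\mathcal{G}^{(0)})$ to scale the values at $\gamma$ and at $u$ by the \emph{same} factor $\phi(u)\psi(u)$, so the relation $a(\gamma)=a(u)$ survives the bimodule operations and norm limits (via the norm-decreasing map $j$), while a compactly supported function on $\sigma(M)$ can separate the two points. The constructions, however, are dual: the paper takes the \emph{maximal} bimodule with this property, $M=\{a\in C_r^*(\mathcal{G}): a(\gamma_0)=a(r(\gamma_0))\}$, which is automatically closed, makes the bimodule verification a one-line check, and makes membership of the separating function immediate; you take a \emph{singly generated} bimodule $\overline{\langle f+g\rangle}$ contained in the paper's $M$, which costs you the verification that the relation propagates from the generator through $\phi x\psi$, linear spans, and closure, but buys a concretely small witness in the spirit of Example \ref{ex1} and shows the failure of spectrality is detected already at the level of singly generated diagonal bimodules. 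Two small remarks: your condition $r(B)\cup s(B)\subseteq U$ is never used (only $f(\gamma)=g(u)=1$, $f(u)=g(\gamma)=0$ matter, which follow from $B\cap\mathcal{G}^{(0)}=\emptyset$ and $U\subseteq\mathcal{G}^{(0)}$); and strictly speaking the generated bimodule is the closed span of $x,\ \phi x,\ x\psi,\ \phi x\psi$, whose stability under further diagonal multiplications follows since $C_0(\mathcal{G}^{(0)})$ is an algebra and multiplication is norm-continuous --- your argument covers all these generator types, so the gap-free conclusion $M\subsetneq A(\sigma(M))$ stands.
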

	
	\begin{proof} By assumption, choose $\gamma_0\in \mathcal{G}'\setminus \mathcal{G}^{(0)}$ and define  $M : =\{a\in C_r^*(\mathcal{G}): a(\gamma_0)=a(r(\gamma_0))\}$. One can verify that $M$ is a diagonal bimodule of $C_r^*(\mathcal{G})$. Since $\gamma_0\neq r(\gamma_0)$, there exist $f_1, f_2\in C_c(\mathcal{G})$ such that $f_1(\gamma_0)=f_2(r(\gamma_0))=1$ and $f_1(r(\gamma_0))=f_2(\gamma_0)=0$. Let $f=f_1+f_2$. Then $f(\gamma_0)=f(r(\gamma_0))=1$, so $f\in M$, which implies that $\gamma_0, r(\gamma_0)\in \sigma(M)$. Choose $\psi\in C_c(\sigma(M))$ such that $\psi(\gamma_0)\neq \psi(r(\gamma_0))$. Then $\psi\in A(\sigma(M))$, but $\psi\notin M$.\end{proof}

	For a nonempty open subset $P$ of $\mathcal{G}$, define $A_c(P)=C_c(\mathcal{G})\cap A(P)$. Then $$C_c(P)\subseteq A_c(P)\subseteq A(P)\subseteq C_0(P).$$ By \cite[Lemma in Page 101]{Ren}, $C_c(P)$ is dense in $A_c(P)$ under the inductive limit topology on $C_c(\mathcal{G})$, and hence also  under the reduced norm on $C_c(\mathcal{G})$. In \cite{MS,YCX}, the authors used the bipolar theorem and the groupoid representation theory to show that $C_c(P)$ is dense in $A(P)$ with respect to the reduced norm for amenable  principal \'{e}tale groupoids. When $P$ is an open subgroupoid of an amenable \'{e}tale groupoid, this result remains valid by (\cite[Theorem 4.2]{BEFPR}). The following theorem demonstrates that the same conclusion holds for arbitrary open subsets of amenable \'{e}tale groupoids. Although this may be known, we include a proof for completeness, following the approach of \cite[Corollary 6.17, Chapter 5]{BO}.
	
	\begin{theorem}\label{th0} Let $\mathcal{G}$ be an amenable \'{e}tale groupoid. Then $C_c(P)$ is dense in $A(P)$ in the reduced norm on $C_r^*(\mathcal{G})$ for each nonempty  open subset $P$ of $\mathcal{G}$.
	\end{theorem}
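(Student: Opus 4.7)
The plan is to exploit amenability of $\mathcal{G}$ to produce a net of compactly supported positive-type continuous functions $h_i$ on $\mathcal{G}$ whose associated completely positive multipliers on $C_r^*(\mathcal{G})$ approximate the identity in the strong-* / point-norm sense, and then to observe that applying these multipliers to $a \in A(P)$ automatically produces elements of $A_c(P) := C_c(\mathcal{G})\cap A(P)$.

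First, I would invoke the standard topological amenability characterization (Anantharaman-Delaroche--Renault): since $\mathcal{G}$ is amenable, there exists a net $(h_i)\subseteq C_c(\mathcal{G})$ of continuous positive-type functions such that $h_i \to 1$ uniformly on compact subsets of $\mathcal{G}$. Each $h_i$ induces a completely positive, contractive map $\Phi_i\colon C_r^*(\mathcal{G})\to C_r^*(\mathcal{G})$ whose action on $C_c(\mathcal{G})$, and in fact on all of $C_r^*(\mathcal{G})$ via the embedding $j$, is given by the pointwise product $j(\Phi_i(a))=h_i\cdot j(a)$ on $\mathcal{G}$. The uniform bound $\|\Phi_i\|\le 1$ combined with uniform convergence of $h_i$ to $1$ on compacta is then enough to show, by a standard $\varepsilon/3$ argument on the dense subspace $C_c(\mathcal{G})\subseteq C_r^*(\mathcal{G})$, that $\Phi_i(a)\to a$ in the reduced norm for every $a\in C_r^*(\mathcal{G})$.

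Now fix a nonempty open $P\subseteq \mathcal{G}$ and let $a\in A(P)$. The function $h_i\cdot j(a)$ is continuous on $\mathcal{G}$ (product of two continuous functions), has compact support contained in $\operatorname{supp}(h_i)$, and vanishes off $P$ since $j(a)$ does. Thus $\Phi_i(a)\in C_c(\mathcal{G})\cap A(P)=A_c(P)$, and $\Phi_i(a)\to a$ in reduced norm, establishing that $A_c(P)$ is norm-dense in $A(P)$. Since the paragraph preceding the theorem already records that $C_c(P)$ is dense in $A_c(P)$ in the inductive limit topology (hence in the reduced norm) by \cite[Lemma, p.~101]{Ren}, the conclusion $\overline{C_c(P)}=A(P)$ follows by chaining the two approximations.

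The main technical point I expect to have to pin down carefully is the existence and formula for $\Phi_i$: that a continuous compactly supported positive-type function $h$ on $\mathcal{G}$ determines a completely positive contraction on $C_r^*(\mathcal{G})$ whose ``symbol'' is pointwise multiplication by $h$ through the embedding $j$. This is standard for amenable groupoids but needs the appropriate GNS-type construction together with the characterization of reduced norm via the representations $\operatorname{Ind}_u$; once this is in place, the density argument is routine, and the remaining work to pass from $A_c(P)$ to $C_c(P)$ is already supplied by the Renault lemma cited above.
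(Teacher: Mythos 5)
Your proposal is correct and follows essentially the same route as the paper: the paper also uses amenability to produce positive-type functions $\xi_n^*\xi_n$ (with $\|\xi_n\|_{L^2(\mathcal{G})}\le 1$, which supplies the contractivity you need to normalize for) whose associated completely positive multipliers $a\mapsto(\xi_n^*\xi_n)\cdot a$ converge to the identity in point-norm by \cite[Corollary 6.17, Chapter 5]{BO}, and observes exactly as you do that these maps send $A(P)$ into $A_c(P)$. The only cosmetic difference is that for the remaining density of $C_c(P)$ in $A_c(P)$ the paper gives a self-contained partition-of-unity argument reducing to open bisections (where reduced and supremum norms coincide), whereas you invoke the Renault lemma that the paper itself records just before the theorem.
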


	\begin{proof}
		Given a nonempty  open subset $P$ of $\mathcal{G}$, we first claim that $C_c(P)$ is dense in $A_c(P)$ in the reduced norm. 
		
		In fact, for $a\in A_c(P)$, we let $K:=\overline{\mathrm{supp}'(a)}\subseteq \cup_{i=1}^nU_i$ for open bisections $U_i's$. 
		By the partition of unity,   there exist  functions $h_i\in C_c(U_i)$ with $0\leq h_i\leq 1$ such that $a=\sum\limits_{i=1}^nh_i\cdot a$. Let $a_i:=h_i\cdot a\in A_c(P)$ with $\mathrm{supp}'(a_i)\subseteq P\cap U_i$. Thus, we may assume that the open support $\mathrm{supp}'(a)(:=V)$  of $a\in A_c(P)$ is an open bisection contained in $P$.

		For $n \geq 1$, define  $V_n:= \{\gamma\in \mathcal{G}: |a(\gamma)|>\frac 1n\}$ and $K_n:=\{\gamma\in \mathcal{G}: |a(\gamma)|\geq\frac 1n\}$. Then $K_n$ is compact, $V_n$ is open and $V_n\subseteq \overline{V_n}\subseteq K_n\subseteq V_{n+1}$ for each $n$, with   $\bigcup_{n=1}^{\infty}V_n=V$. 
		For each $n$, choose $\psi_n\in C_c(V_{n+1})$ such that $0\leq \psi_n(\gamma)\leq 1$ for $\gamma\in \mathcal{G}$ and $\psi_n|_{K_n}=1$. Let $f_n=\psi_n\cdot a$ for each $n$. Then $f_n\in C_c(V_{n+1}) \subseteq C_c(V)$, and $\{f_n\}$ converges to $a$ in the supremum norm on $C_c(\mathcal{G})$. Since   the reduced norm and the supremum norm coincide on the open bisection $V$ (\cite{Sim}), it follows that $\{f_n\}$ converges to $a$ in the reduced norm.  This proves the claim.

		Since $\mathcal{G}$ is amenable, there exists a sequence $\{\xi_n\}$ in $C_c(\mathcal{G})$ such that $\|\xi_n\|_{L^2(\mathcal{G})}\leq 1$ for each $n$ and $(\xi_n^* \xi_n)(\gamma)\rightarrow 1$  for $\gamma\in \mathcal{G}$, uniformly on compact subsets of $\mathcal{G}$.  
		Then, for each $n$, the multiplier map  $M_n:\, f\in C_c(\mathcal{G})\mapsto (\xi_n^*\xi_n)\cdot f\in C_c(\mathcal{G})$ extends to a contractive completely positive map, still denoted by $M_n$, on $C_r^*(\mathcal{G})$. Moreover, $M_n(a)\to a$ in $C_r^*(\mathcal{G})$ for every $a\in C_r^*(\mathcal{G})$ (See \cite[Corollary 6.17, Chapter 5]{BO}). Note that if $a\in A(P)$, then $M_n(a)=(\xi_n^*\xi_n)\cdot a\in A_c(P)$ for each $n$. Hence, $A_c(P)$ is dense in $A(P)$. Combining this with the previous claim, we conclude that $C_c(P)$ is dense in $A(P)$ in the reduced norm.
	\end{proof}

	As an application, the following corollary shows that for a group $G$, a closed subspace of $C_r^*(\mathcal{G})$ is spectral as a diagonal bimodule only if it is the closed linear span of some generators $\lambda_g's$.
	
	\begin{corollary}\label{l11} Let $G$ be an amenable countable infinite group. Then a nonzero closed subspace $M$ of $C_r^*(G)$ is spectral as a diagonal bimodule if and only if there is a unique subset $P\subseteq G$ such that $M=\overline{\operatorname{span}}\{\lambda_g:\, g\in P\}$.
	\end{corollary}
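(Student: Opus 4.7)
The plan is to translate both implications into applications of Theorem \ref{th0}, after unpacking what the groupoid-theoretic vocabulary reduces to when $\mathcal{G}=G$ is a countable discrete group. In this case the unit space is the singleton $\{e\}$, so $C_0(\mathcal{G}^{(0)})=\mathbb{C}$ and the condition $aMb\subseteq M$ for $a,b\in\mathbb{C}$ is automatic; thus a diagonal bimodule is exactly a norm-closed subspace of $C_r^*(G)$. Also, $G$ is discrete, so every subset $P\subseteq G$ is open and $C_c(P)=\operatorname{span}\{\lambda_g:g\in P\}$, since compactly supported continuous functions on $P$ are precisely the finitely supported ones.

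For the forward direction, assume $M$ is spectral and set $P:=\sigma(M)\subseteq G$. By the spectrality assumption $M=A(P)$, and by Theorem \ref{th0} (invoking the amenability of $G$) the space $C_c(P)=\operatorname{span}\{\lambda_g:g\in P\}$ is reduced-norm dense in $A(P)$. Hence $M=\overline{\operatorname{span}}\{\lambda_g:g\in P\}$, as required.

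For the converse, suppose $M=\overline{\operatorname{span}}\{\lambda_g:g\in P\}$ for some $P\subseteq G$. The key identification is $\sigma(M)=P$. One inclusion is immediate: each $\lambda_g\in M$ satisfies $\lambda_g(g)=1\neq 0$. For the other inclusion, recall from the remark that the map $j\colon C_r^*(G)\to c_0(G)$, $j(a)(g)=\langle a\delta_e,\delta_g\rangle$, is norm-decreasing, so each point-evaluation $a\mapsto a(g)$ is continuous. For any finite combination $\sum_{h\in F}c_h\lambda_h$ with $F\subseteq P$ and any $g\notin P$, the evaluation at $g$ is zero; passing to norm limits gives $a(g)=0$ for every $a\in M$ and every $g\notin P$, so $\sigma(M)\subseteq P$. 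Now Theorem \ref{th0} applied to the open set $P$ yields $A(\sigma(M))=A(P)=\overline{C_c(P)}=\overline{\operatorname{span}}\{\lambda_g:g\in P\}=M$, so $M$ is spectral. Uniqueness of $P$ is then forced, since $P=\sigma(M)$ is intrinsically determined by $M$.

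I do not expect any real obstacle: the whole content sits in Theorem \ref{th0}, which supplies the density that is missing in the general non-amenable case. The only point requiring a small amount of care is the inclusion $\sigma(M)\subseteq P$ in the backward direction, where one must use the continuity of the point-evaluations on $C_r^*(G)$ (equivalently, the injectivity and norm-decreasing property of $j$ recorded in the Remark) rather than merely working with finite sums.
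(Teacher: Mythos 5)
Your proposal is correct and follows essentially the same route as the paper: both directions rest on Theorem \ref{th0}, with $C_c(P)=\operatorname{span}\{\lambda_g: g\in P\}$ for the discrete group $G$. The only difference is that you explicitly verify $\sigma(M)=P$ via the norm-continuity of the point-evaluations $a\mapsto j(a)(g)$, a detail the paper dismisses with ``one can check,'' so your write-up is if anything slightly more complete.
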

	\begin{proof} Assume that $M$ is spectral. Since $G$ is amenable, it follows from the above theorem that  $M=A(\sigma(M))=\overline{\operatorname{span}}\{\lambda_g:\, g\in \sigma(M)\}$.
		
		On the other direction, assume that $M=\overline{\operatorname{span}}\{\lambda_g:\, g\in P\}$ for a nontrivial subset $P$ of $G$. One can check that $\sigma(M)=P$. It follows from Theorem \ref{th0} that $A(\sigma(M))=A(P)=\overline{C_c(P)}=M$, thus $M$ is spectral.
	\end{proof}

	\begin{remark} When $\mathcal{G}$ is a countable discrete group, the sequence $\{j(a)(g)\}_{g\in \mathcal{G}}$ and the formal series $\sum\limits_{g\in \mathcal{G}}j(a)(g)\lambda_g$ are referred to as the Fourier coefficients and the Fourier series of $a\in C_r^*(\mathcal{G})$, respectively. The preceding theorem equivalently asserts  that for every amenable countable group, the Fourier series of each $a\in C_r^*(\mathcal{G})$ converges to $a$ in the reduced norm.  It is well known that this property also holds for certain non-amenable groups.
		
		In the more general setting of an \'{e}tale groupoid $C^*$-algebra $C_r^*(\mathcal{G})$, one may regard $j(a)(\gamma)$ as the $\gamma$-th Fourier coefficient of $a$.  In \cite{FK}, the authors studied the problem of approximating an element $a\in C_r^*(\mathcal{G})$ by continuous functions with compact support on $\mathrm{supp}'(a)$, with respect to the reduced norm on $C_r^*(\mathcal{G})$.
		For convenience, we introduce the following notion and its corollary.\end{remark}
	
	\begin{definition} An \'etale groupoid $\mathcal{G}$  is said to have the {\sl Fourier coefficients approximation property} if $a\in \overline{C_c(\mathrm{supp}'(a))}$ holds for all $a\in C_r^*(\mathcal{G})$, with the closure taken in the reduced norm.
	\end{definition}
	
	\begin{corollary}\label{c1}
		Assume an \'{e}tale groupoid $\mathcal{G}$ satisfies  the  Fourier coefficients approximation property. Then $C_c(P)$ is dense in $A(P)$ in the reduced norm on $C_r^*(\mathcal{G})$ for each nonempty  open subset $P$ of $\mathcal{G}$.
	\end{corollary}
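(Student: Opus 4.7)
The plan is to observe that this corollary is essentially a direct translation of the Fourier coefficients approximation property, obtained by sandwiching $A(P)$ between $\overline{C_c(P)}$ and itself, with no serious analytic content beyond what the hypothesis already provides.

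First I would verify the easy inclusion $\overline{C_c(P)}\subseteq A(P)$. Any $f\in C_c(P)$ vanishes pointwise outside $P$ and hence lies in $A(P)$. Moreover, $A(P)$ is closed in the reduced norm: since the injection $j:C_r^*(\mathcal{G})\hookrightarrow C_0(\mathcal{G})$ is norm-decreasing, each evaluation $a\mapsto a(\gamma)=j(a)(\gamma)$ is a continuous linear functional on $C_r^*(\mathcal{G})$, so that $A(P)=\bigcap_{\gamma\notin P}\ker(a\mapsto a(\gamma))$ is an intersection of closed subspaces.

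For the reverse inclusion, I would fix $a\in A(P)$. By the very definition of $A(P)$, $a(\gamma)=0$ for every $\gamma\notin P$, which is equivalent to $\mathrm{supp}'(a)\subseteq P$, and hence to $C_c(\mathrm{supp}'(a))\subseteq C_c(P)$. Applying the Fourier coefficients approximation property to $a$ then yields
\[
a\in \overline{C_c(\mathrm{supp}'(a))}\subseteq \overline{C_c(P)},
\]
which gives $A(P)\subseteq \overline{C_c(P)}$ and completes the proof.

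There is no real obstacle here; the hypothesis is tailored precisely to deliver the conclusion once one notes the elementary fact that membership in $A(P)$ forces $\mathrm{supp}'(a)\subseteq P$. The only thing worth underlining, beyond that observation, is the straightforward closedness of $A(P)$, which ensures that the two inclusions combine into the desired density statement.
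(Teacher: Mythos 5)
Your proof is correct and is exactly the argument the paper intends: the corollary is stated without proof because it follows immediately from the definition of the Fourier coefficients approximation property via the observation that $a\in A(P)$ forces $\mathrm{supp}'(a)\subseteq P$, hence $a\in\overline{C_c(\mathrm{supp}'(a))}\subseteq\overline{C_c(P)}$. Your additional verification that $A(P)$ is reduced-norm closed (via the norm-decreasing map $j$ and the evaluation functionals) is a sound, if routine, supplement.
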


	According to Theorem \ref{th0}, every amenable groupoid satisfies the Fourier coefficients approximation property. By results in \cite{BC,CN}, if $G$ is a countable discrete group that is weakly amenable or satisfies the Haagerup-Kraus approximation property, then every $C^*$-dynamical system defined by
	$G$  enjoys the Fej\'{e}r property. Consequently, the transformation groupoids arising from homeomorphism actions of $G$ on second countable, locally compact Hausdorff spaces also possess the Fourier coefficients approximation property. Furthermore, as shown in \cite{FK}, this approximation property holds for \'{e}tale groupoids that satisfy the rapid decay property with respect to conditionally negative definite lengths.
	
	We have the following observation.
	
	\begin{corollary}\label{c3} Let $\mathcal{G}$ be an \'{e}tale groupoid with the Fourier coefficients approximation property.
		\begin{enumerate}
			\item[\rm (i)] Let $\{P_n\}$ and $\{M_n\}$ be sequences of nonempty open subsets of $\mathcal{G}$ and of spectral diagonal bimodule of $C_r^*(\mathcal{G})$, respectively. Then $A(\cup_nP_n)=\overline{\operatorname{span}}\{A(P_n): n\geq 1\}$ and $\sigma(\overline{\operatorname{span}}\{M_n: n\geq 1\}))=\cup_n\sigma(M_n)$.
			
			\item[\rm (ii)] For nonempty open subsets, $\{P_i: 1\leq i\leq n\}$, of $\mathcal{G}$ and spectral diagonal bimodule $\{M_i: 1\leq i\leq n\}$, of $C_r^*(\mathcal{G})$, we have  $A(\cap_{i=1}^nP_i)=\cap_{i=1}^n A(P_i)$ and $\sigma(\cap_{i=1}^n M_i)=\cap_{i=1}^n\sigma(M_i)$.
		\end{enumerate}
	\end{corollary}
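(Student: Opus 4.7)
My plan is to treat the four identities one at a time, leaning on Corollary~\ref{c1} whenever I need to produce or approximate an element of some $A(P)$, on the norm-decreasing embedding $j\colon C_r^*(\mathcal{G})\hookrightarrow C_0(\mathcal{G})$ whenever I need continuity of a pointwise condition, and on Proposition~\ref{p3} together with the spectrality hypothesis to translate between $M_i$ and $A(\sigma(M_i))$.

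For part (i), the inclusion $\overline{\operatorname{span}}\{A(P_n):n\geq 1\}\subseteq A(\cup_n P_n)$ is immediate from the monotonicity of $A(\cdot)$ and the fact that $A(\cup_n P_n)$ is norm-closed. For the reverse inclusion I would take $a\in A(\cup_n P_n)$ and, invoking Corollary~\ref{c1}, approximate it in the reduced norm by some $f\in C_c(\cup_n P_n)$; the compact set $\operatorname{supp}(f)$ is then covered by finitely many of the $P_n$, and a subordinate partition of unity decomposes $f=\sum_{k=1}^N f_k$ with $f_k\in C_c(P_{n_k})\subseteq A(P_{n_k})$, placing $f$ in $\operatorname{span}\{A(P_n)\}$. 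For the second identity of (i), the containment $\cup_n\sigma(M_n)\subseteq\sigma(\overline{\operatorname{span}}\{M_n\})$ is trivial since $M_n\subseteq\overline{\operatorname{span}}\{M_n\}$. For the opposite inclusion, note that for each $\gamma\in\mathcal{G}$ the map $a\mapsto j(a)(\gamma)$ is a continuous linear functional on $C_r^*(\mathcal{G})$ (since $j$ is norm-decreasing and evaluation is sup-norm-continuous on $C_0(\mathcal{G})$); so if $\gamma\notin\cup_n\sigma(M_n)$, every element of $\bigcup_n M_n$ vanishes at $\gamma$, hence every finite linear combination does, and by continuity so does every element of $\overline{\operatorname{span}}\{M_n\}$.

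Part (ii) is essentially corollary to part (i) together with the spectrality assumption. The identity $A(\cap_{i=1}^n P_i)=\cap_{i=1}^n A(P_i)$ is purely set-theoretic: $a$ vanishes off $\cap_i P_i$ exactly when it vanishes off each $P_i$. For $\sigma(\cap_i M_i)=\cap_i\sigma(M_i)$ the inclusion $\subseteq$ follows from $\cap_i M_i\subseteq M_j$; for $\supseteq$, the spectrality of each $M_i$ gives
\[
\bigcap_{i=1}^n M_i \;=\; \bigcap_{i=1}^n A(\sigma(M_i)) \;=\; A\Bigl(\bigcap_{i=1}^n \sigma(M_i)\Bigr),
\]
where the second equality uses the first identity of (ii). Since $\cap_i\sigma(M_i)$ is open, Proposition~\ref{p3} then yields $\sigma(\cap_i M_i)=\cap_i\sigma(M_i)$.

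The only mildly nontrivial step is the partition-of-unity decomposition used in the first identity of (i); every other step is either a direct set-theoretic check, an immediate application of continuity of $j$, or a bookkeeping combination of Corollary~\ref{c1} and Proposition~\ref{p3}. I do not anticipate any genuine obstacle, since the Fourier coefficients approximation property has already done the heavy lifting in establishing Corollary~\ref{c1}.
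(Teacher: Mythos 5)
Your proof is correct and follows precisely the route the paper intends: the paper states this corollary without proof as an observation, and the ingredients you use --- density of $C_c(P)$ in $A(P)$ from Corollary \ref{c1}, a partition of unity subordinate to a finite subcover of the compact support, norm-continuity of the evaluation functionals $a\mapsto j(a)(\gamma)$ (since $j$ is norm-decreasing), and Proposition \ref{p3} combined with $M_i=A(\sigma(M_i))$ --- are exactly the ones the paper has at hand. As a minor aside, your argument shows that spectrality of the $M_n$ is not actually needed for the identity $\sigma(\overline{\operatorname{span}}\{M_n\})=\cup_n\sigma(M_n)$, nor is the approximation property needed for the purely set-theoretic identity $A(\cap_{i=1}^nP_i)=\cap_{i=1}^nA(P_i)$, so the hypotheses are only used where you invoke them.
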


	For each open bisection $U$ of $G$, the two homeomorphisms $r|_U: U\rightarrow r(U)$ and $s|_U: U\rightarrow s(U)$ induce two $C^*$-isomorphisms, $r_{\ast}: C_0(U)\rightarrow C_0(r(U))$ and  $s_{\ast}: C_0(U)\rightarrow C_0(s(U))$ by $r_{\ast}(f)(u)=f((r|_U)^{-1}(u))$ and $s_{\ast}(f)(v)=f((s|_U)^{-1}(v))$ for  $f\in C_0(U)$, $u\in r(U)$ and $v\in s(U)$.

	\begin{lemma}\label{l1} Let $M$ be a diagonal bimodule and $U$ be an open bisection of $\mathcal{G}$.
		\begin{enumerate}
			\item[\rm (i)] If $f\in C_c(U)$ then $\|f\|_r=\|f\|_{\infty}$, so $C_0(U)\subset C_r^*(\mathcal{G})$ and $\|a\|_{r}=\|a\|_{\infty}$ for $a\in C_0(U)$.

			\item[\rm (ii)] If $f,g\in C_0(U)$ then $f\cdot g=r_{\ast}(f)g=fs_{\ast}(g)$. Moreover, if $f$ or $g$ is also in $M$, then $f\cdot g\in C_0(U)\cap M$.
			\item[\rm (iii)] 
			For $a\in M$ with $\mathrm{supp}'(a)\subseteq U$,   there exists a sequence $\{f_n\}$ in $M$ such that $f_n\in C_c(U)$ for each $n$ and $\{f_n\}$ converges to $a$ in the reduced norm.
			
		\end{enumerate}
	\end{lemma}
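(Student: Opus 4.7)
The plan is to handle the three parts in order, using (i) and (ii) as stepping stones toward (iii).

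For (i), I would invoke the standard calculation from \cite{Sim}: for $f\in C_c(U)$ supported in an open bisection $U$, the induced representation $\mathrm{Ind}_u(f)$ on $\ell^2(\mathcal{G}_u)$ acts as $\delta_{\gamma'}\mapsto f(\gamma\gamma'^{-1})\delta_{\gamma}$ with at most one compatible $\gamma\in U$ for each $\gamma'\in \mathcal{G}_u$ (because $U$ is a bisection), so that its operator norm equals $\sup\{|f(\gamma)|:s(\gamma)=u\}$. Taking the supremum over $u\in \mathcal{G}^{(0)}$ yields $\|f\|_r=\|f\|_{\infty}$. Since $C_c(U)$ is sup-dense in $C_0(U)$, the canonical inclusion $C_c(U)\hookrightarrow C_r^*(\mathcal{G})$ extends by continuity to an isometric embedding $C_0(U)\hookrightarrow C_r^*(\mathcal{G})$ preserving both norms.

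For (ii), I would apply the convolution formula from the first paragraph of Section 3 directly. Because $r_{\ast}(f)$ is supported in $r(U)\subseteq \mathcal{G}^{(0)}$, in $(r_{\ast}(f)g)(\gamma)=\sum_{\sigma\in \mathcal{G}_{s(\gamma)}} r_{\ast}(f)(\gamma\sigma^{-1})g(\sigma)$ the only possibly nonzero term forces $\gamma\sigma^{-1}\in \mathcal{G}^{(0)}$, hence $\sigma=\gamma$; combined with $g(\sigma)\neq 0$ this requires $\gamma\in U$ and gives $f(\gamma)g(\gamma)$. A symmetric computation handles $fs_{\ast}(g)$, so $r_{\ast}(f)g=f\cdot g=fs_{\ast}(g)$. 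The bimodule statement is then immediate: writing $f\cdot g$ as either $r_{\ast}(f)g$ or $fs_{\ast}(g)$ according as $g\in M$ or $f\in M$ exhibits it as the product of an element of $M$ with an element of $C_0(\mathcal{G}^{(0)})$, and membership in $C_0(U)$ is visible from the pointwise description.

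For (iii), I would adapt the truncation argument from the proof of Theorem \ref{th0}. Since $j(a)\in C_0(\mathcal{G})$, the sets $K_n:=\{\gamma: |a(\gamma)|\geq 1/n\}$ are compact and $V_n:=\{\gamma: |a(\gamma)|>1/n\}$ are open with $V_n\subseteq \overline{V_n}\subseteq K_n\subseteq V_{n+1}$ and $\bigcup_n V_n=\mathrm{supp}'(a)\subseteq U$. Pick $\psi_n\in C_c(V_{n+1})$ with $0\leq \psi_n\leq 1$ and $\psi_n|_{K_n}=1$, and set $f_n:=a\,s_{\ast}(\psi_n)$. Then $f_n\in M$ because $s_{\ast}(\psi_n)\in C_0(\mathcal{G}^{(0)})$, and by (ii) $f_n$ coincides with the pointwise product $\psi_n\cdot a\in C_c(V_{n+1})\subseteq C_c(U)$. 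A case split on whether $\gamma\in K_n$, $\gamma\in V_{n+1}\setminus K_n$, or $\gamma\notin V_{n+1}$ gives $\|a-f_n\|_{\infty}\leq 1/n$.

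The main obstacle is the transfer from sup-norm convergence to reduced-norm convergence in (iii): one must know that $a$ itself (not only the $f_n$) sits inside the isometric copy of $C_0(U)\subset C_r^*(\mathcal{G})$ produced by (i), so that $a-f_n\in C_0(U)$ and (i) applies. This follows from the injectivity of $j$: let $\tilde a\in C_0(U)$ denote the restriction $j(a)|_U$ and view $\tilde a\in C_r^*(\mathcal{G})$ via (i). Approximating $\tilde a$ in sup norm by functions in $C_c(U)$ and using that $j(h)=h$ for $h\in C_c(\mathcal{G})$, one sees $j(\tilde a)$ is $\tilde a$ extended by zero, which equals $j(a)$ since $\mathrm{supp}'(a)\subseteq U$; hence $\tilde a=a$ in $C_r^*(\mathcal{G})$. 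With this identification in hand, the sup-norm estimate above and part (i) yield $\|a-f_n\|_r=\|a-f_n\|_{\infty}\leq 1/n\to 0$, completing (iii).
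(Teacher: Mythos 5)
Your proposal is correct and takes essentially the same route as the paper: part (i) is the standard bisection norm computation that the paper simply cites from \cite{Sim}, part (ii) is the same convolution calculation, and part (iii) reuses the paper's own truncations $\psi_n\cdot a$ from the proof of Theorem \ref{th0} together with (i) and (ii), with your final paragraph (identifying $a$ with $j(a)|_U\in C_0(U)$ via injectivity of $j$) carefully supplying a justification that the paper leaves implicit in its bare assertion that ``$a\in C_0(U)$''. One cosmetic slip in (i): the norm of $\mathrm{Ind}_u(f)$ is the supremum of $|f|$ over the points of $U$ whose source lies in the orbit $r(\mathcal{G}_u)$, not only over those with source $u$, but this does not affect the identity $\|f\|_r=\|f\|_{\infty}$ once the supremum over $u\in\mathcal{G}^{(0)}$ is taken.
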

	
	\begin{proof} For (i), we refer to \cite{Sim}.
		
		For (ii), by calculation, we have $f(\gamma)g(\gamma)=(r_{\ast}(f)g)(\gamma)=(gs_{\ast}(f))(\gamma)$ for all $f,g\in C_c(U)$ and $\gamma\in \mathcal{G}$.
		
		For (iii),  let $V=\mathrm{supp}'(a)$. Then $V\subseteq U$ and $a\in C_0(U)$. Let $V_n, K_n, \psi_n$ and $f_n$ be as in the third paragraph of the proof for Theorem \ref{th0}. Then $f_n\in C_c(V_{n+1})\subseteq C_c(U)$. By (ii), we have $f_n\in M$ for each $n$ and  $\{f_n\}$ converges to $a$ in the reduced norm.
	\end{proof}
	
	\begin{proposition}\label{p2} Given an \'{e}tale groupoid $\mathcal{G}$  and a diagonal bimodule $M$ of $C_r^*(\mathcal{G})$, if the linear span of all elements in $M$ whose open supports are bisections is dense in $M$ in the reduced norm, then $M=\overline{C_c(\sigma(M))}$, where the closure is taken in the reduced norm.
	\end{proposition}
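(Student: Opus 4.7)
I will prove $M=\overline{C_c(\sigma(M))}$ via two inclusions. For the easy direction $M\subseteq \overline{C_c(\sigma(M))}$, by the density hypothesis and linearity it suffices to handle an element $a\in M$ with $\mathrm{supp}'(a)$ contained in a single open bisection $U$. Lemma \ref{l1}(iii) then furnishes a sequence $\{f_n\}\subseteq M\cap C_c(U)$ with $f_n\to a$ in the reduced norm. Since each $f_n$ belongs to $M$ and is continuous with compact support, $\mathrm{supp}'(f_n)\subseteq \sigma(M)$ by definition of the spectrum, and hence $f_n\in C_c(\sigma(M))$, giving $a\in \overline{C_c(\sigma(M))}$.

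The harder inclusion $\overline{C_c(\sigma(M))}\subseteq M$ begins with the auxiliary identity $\sigma(M)=\sigma_b(M)$, where $\sigma_b(M)$ denotes the union of the bisection-valued open supports of elements of $M$. The inclusion $\sigma_b(M)\subseteq \sigma(M)$ is immediate. Conversely, given $\gamma\in \sigma(M)$ with $a(\gamma)\neq 0$ for some $a\in M$, I would approximate $a$ by finite linear combinations $a_n=\sum_k c_k^{(n)}b_k^{(n)}$ of bisection-supported elements of $M$ and exploit that $j\colon C_r^*(\mathcal{G})\to C_0(\mathcal{G})$ is norm-decreasing, so $a_n(\gamma)\to a(\gamma)\neq 0$; for large $n$ some summand $b_k^{(n)}$ is nonzero at $\gamma$, whence $\gamma\in \sigma_b(M)$.

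Now fix $f\in C_c(\sigma(M))$. For each $\gamma\in \mathrm{supp}(f)$ choose $b_\gamma\in M$ with $W_\gamma:=\mathrm{supp}'(b_\gamma)$ an open bisection containing $\gamma$, and extract a finite subcover $W_1,\dots,W_n$ of $\mathrm{supp}(f)$ with corresponding $b_i\in M$. A partition of unity subordinate to $\{W_i\}$ produces functions $h_i\in C_c(W_i)$ with $f=\sum_{i=1}^n h_i f$. It suffices to show that each $g_i:=h_i f$ lies in $M$. Since $b_i$ is pointwise nonvanishing throughout $W_i$ and $g_i$ has compact support inside $W_i$, the quotient $\phi_i:=g_i/b_i$ extends by zero to an element of $C_c(W_i)\subseteq C_0(W_i)$. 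Lemma \ref{l1}(ii) then identifies the pointwise product $\phi_i\cdot b_i=g_i$ with the convolution product $r_\ast(\phi_i)\,b_i$, where $r_\ast(\phi_i)\in C_0(\mathcal{G}^{(0)})$. The bimodule property of $M$ thus forces $g_i\in M$, so $f=\sum_i g_i\in M$; closedness of $M$ completes the inclusion.

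The principal obstacle is this final division step: it simultaneously demands that $W_i$ be a bisection (so $r|_{W_i}$ is a homeomorphism and $r_\ast$ transports $\phi_i$ to a legitimate element of $C_0(\mathcal{G}^{(0)})$, converting the pointwise factorisation into a convolution action on $b_i$) and that $b_i$ be nowhere zero on $W_i$ (so dividing by $b_i$ is well defined). Both features are delivered by the density hypothesis through the identity $\sigma(M)=\sigma_b(M)$, which is therefore the conceptual heart of the argument.
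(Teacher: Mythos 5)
Your proof is correct and follows essentially the same route as the paper: the easy inclusion via Lemma \ref{l1}(iii), and the reverse inclusion by covering $\mathrm{supp}(f)$ with bisection open supports of elements of $M$, applying a partition of unity, and dividing pointwise so that Lemma \ref{l1}(ii) together with the bimodule property yields $g_i=r_\ast(\phi_i)b_i\in M$. Your explicit verification that $\sigma(M)$ equals the union of bisection-valued open supports (via norm-decreasingness of $j$ and continuity of evaluation) is exactly the justification the paper leaves implicit in the phrase ``we can let $Q:=\mathrm{supp}'(a)$ be a bisection,'' so it is a welcome clarification rather than a departure.
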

	
	\begin{proof} For $a\in M$,  we have $U:=\mathrm{supp}'(a)$ is contained in $\sigma(M)$. If $U$ is a bisection, then it follows from Lemma \ref{l1} (iii) that $a\in \overline{C_c(\sigma(M))}$. By the hypothesis, $M\subseteq \overline{C_c(\sigma(M))}$.
		
		On the other direction, let $K$ be a compact subset in $\mathcal{G}$ and $P$ be an open bisection such that $K\subset P\subset \sigma(M)$. For each $\gamma\in K$, there is $a\in M$ such that $a(\gamma)\neq 0$. From the assumption for $M$, we can let $Q:=\mathrm{supp}'(a)$ be a bisection. By the continuity of $a$, choose an open bisection neighbourhood $U_{\gamma}$ of $\gamma$  such that $U_{\gamma}\subset P\cap Q$ and $a(\tau)\neq 0$ for all $\tau\in U_{\gamma}$. For each $f\in C_c(U_{\gamma})$, let $\psi(\tau)=\frac{f(\tau)}{a(\tau)}$ for $\tau\in U_{\gamma}$. Then $\psi\in C_c(U_{\gamma})$, and $\psi\cdot a=f$ in $C_r^*(\mathcal{G})$. It follows from Lemma \ref{l1} (ii) that $f\in M$. This shows that $C_c(U_{\gamma})\subseteq M$. Since $K$ is compact, there exist open bisections $U_1,U_2,\cdots,U_n$ such that $K\subset \cup_{k=1}^nU_k\subset P$ and $C_c(U_k)$ is contained in $M$ for each $k$. Hence, by the partition of unity, if $b\in C_c(\sigma(M))$ has support contained in $K$, then $b\in M$. From the arbitrariness of $K$ and $P$, it follows from the partition of unity again that $C_c(\sigma(M))$ is contained in $M$.  \end{proof}

	\begin{theorem}\label{p1}  Let $\mathcal{G}$ be an \'{e}tale groupoid with the Fourier coefficients approximation property and  $M$ be a nonzero diagonal bimodule of $C_r^*(\mathcal{G})$. Then the following statements are equivalent.
		\begin{enumerate}
			\item[\rm (i)] $M$ is spectral;
			\item[\rm (ii)]  $M$ is generated linearly by all the elements whose open supports are bisections in $\mathcal{G}$;
			\item[\rm (iii)] for each $f\in C_c(\mathcal{G})$ and $a\in M$,  $f\cdot a\in M$.
		\end{enumerate}
	\end{theorem}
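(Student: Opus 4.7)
The plan is to prove the cycle (ii) $\Rightarrow$ (i) $\Leftrightarrow$ (iii) together with (i) $\Rightarrow$ (ii), leaning throughout on Corollary \ref{c1} (which yields $A(P) = \overline{C_c(P)}$ for every nonempty open $P \subseteq \mathcal{G}$) and on partition-of-unity decompositions subordinate to the open bisection base of $\mathcal{G}$. Only the implication (iii) $\Rightarrow$ (i) requires real work; the other directions are routine consequences of the approximation property and Proposition \ref{p2}.

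For (ii) $\Rightarrow$ (i), the hypothesis of (ii) is precisely that of Proposition \ref{p2}, so $M = \overline{C_c(\sigma(M))} = A(\sigma(M))$, the last equality by Corollary \ref{c1}. Conversely, for (i) $\Rightarrow$ (ii), fix $g \in C_c(\sigma(M)) \subseteq M$ and cover its compact support by finitely many open bisections $U_1, \dots, U_n$ inside $\sigma(M)$; a subordinate partition of unity decomposes $g = \sum_i g_i$ with $g_i \in C_c(U_i) \subseteq A(\sigma(M)) = M$ and $\mathrm{supp}'(g_i) \subseteq U_i$ a bisection, so the $g_i$'s span a dense subspace of $C_c(\sigma(M))$, hence of $M$. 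The implication (i) $\Rightarrow$ (iii) is similarly short: given $f \in C_c(\mathcal{G})$ and $a \in M$, the pointwise product $f \cdot a$ (via the identification $a \leftrightarrow j(a) \in C_0(\mathcal{G})$) is continuous with compact support inside $\mathrm{supp}(f)$ and open support inside $\mathrm{supp}'(a) \subseteq \sigma(M)$; hence $f \cdot a \in C_c(\sigma(M)) \subseteq A(\sigma(M)) = M$.

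The substantive step is (iii) $\Rightarrow$ (i). Since $M \subseteq A(\sigma(M)) = \overline{C_c(\sigma(M))}$ by Corollary \ref{c1}, it suffices to show $C_c(\sigma(M)) \subseteq M$. Fix $f \in C_c(\sigma(M))$ with compact support $K$. For each $\gamma \in K$, pick $a_\gamma \in M$ with $a_\gamma(\gamma) \neq 0$ and, by continuity of $j(a_\gamma)$, an open bisection neighbourhood $W_\gamma \subseteq \sigma(M)$ on which $a_\gamma$ is nowhere zero. Extract a finite subcover $W_1, \dots, W_n$ of $K$ with associated $a_1, \dots, a_n \in M$, and a subordinate partition of unity $\{\eta_i\}$ with $\eta_i \in C_c(W_i)$ and $\sum_i \eta_i \equiv 1$ on $K$. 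Define $\phi_i := \eta_i f / a_i$ on $W_i$ and zero elsewhere; since $|a_i|$ is bounded below on the compact set $\mathrm{supp}(\eta_i) \subseteq W_i$, the function $\phi_i$ is continuous with compact support inside $W_i$, so $\phi_i \in C_c(\mathcal{G})$. By hypothesis (iii), $\phi_i \cdot a_i \in M$, and a pointwise check gives $\phi_i \cdot a_i = \eta_i f$ on all of $\mathcal{G}$. Summing and using $\sum_i \eta_i = 1$ on $K = \mathrm{supp}(f)$ yields $f = \sum_i \phi_i \cdot a_i \in M$.

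The principal obstacle is precisely this (iii) $\Rightarrow$ (i) argument: the key maneuver is the local inversion $\phi_i = \eta_i f / a_i$, which relies on the compactness of $\mathrm{supp}(\eta_i)$ inside the bisection $W_i$ and the non-vanishing of $a_i$ on $W_i$, and which combines with hypothesis (iii) to realize an arbitrary bump $\eta_i f$ as a pointwise product of a $C_c$-function with an element of $M$. Everything else is bookkeeping with the Fourier coefficients approximation property.
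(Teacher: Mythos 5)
Your proposal is correct, but on the substantive implication (iii) $\Rightarrow$ (i) it takes a genuinely different and more elementary route than the paper. The paper fixes an open bisection $U\subseteq\sigma(M)$, shows that $C_0(U)\cap M$ is a norm-closed subalgebra of $C_0(U)$ that is closed under complex conjugation (via the $g_n=\varphi_n/a$ construction with $\varphi_n=\psi_n\cdot\overline{a}$), separates points, and vanishes nowhere, then invokes Stone--Weierstrass to obtain $C_0(U)\subseteq M$, finishing with a partition of unity. You instead run a local-division argument directly on an arbitrary $f\in C_c(\sigma(M))$: near each point of $\mathrm{supp}(f)$ you choose $a_i\in M$ nonvanishing on a neighbourhood $W_i$, set $\phi_i=\eta_i f/a_i\in C_c(W_i)$, and apply hypothesis (iii) to get $\eta_i f=\phi_i\cdot a_i\in M$, so $f=\sum_i\phi_i\cdot a_i\in M$ exactly, with no limiting process; Corollary \ref{c1} then gives $A(\sigma(M))=\overline{C_c(\sigma(M))}\subseteq M$. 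This is in effect the second half of the paper's proof of Proposition \ref{p2}, with hypothesis (iii) playing the role that Lemma \ref{l1}(ii) plays there; consequently you need neither Stone--Weierstrass nor conjugation-closedness, nor even the bisection property of the $W_i$ (any neighbourhood on which $a_i$ is nonvanishing suffices), which makes your argument shorter, whereas the paper's route yields the extra structural conclusion $C_0(U)\subseteq M$ for every open bisection $U\subseteq\sigma(M)$. Your (i) $\Rightarrow$ (iii) is also more direct: instead of approximating $a$ by functions in $C_c(\sigma(M))$ and using the coincidence of the reduced and supremum norms on bisections, you observe that $f\cdot a$ is itself a compactly supported continuous function vanishing off $\sigma(M)$. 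One small imprecision there: $f\cdot a$ need not lie in $C_c(\sigma(M))$, since its closed support may meet the boundary of $\sigma(M)$; it lies in $A_c(\sigma(M))=C_c(\mathcal{G})\cap A(\sigma(M))$, which is all you need, as $M=A(\sigma(M))$ then gives $f\cdot a\in M$ immediately. The remaining equivalence (i) $\Leftrightarrow$ (ii) coincides with the paper's appeal to Proposition \ref{p2} and Corollary \ref{c1}.
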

	\begin{proof}
		From Corollary \ref{c1}, $A(\sigma(M))=\overline{C_c(\sigma(M))}$. The equivalence of (i) and (ii) is derived from Proposition \ref{p2}.
		
		For (i) $\Rightarrow$ (iii), assume that $M$ is a spectral diagonal bimodule. Fix a nonempty open bisection $U$ of $\mathcal{G}$ and let $f\in C_c(U)$. For any $a\in M$, we choose a sequence $\{\varphi_n\}$ in $C_c(\sigma(M))$ converging  to $a$ in the reduced norm, thus in the uniform norm. Hence $\{f\cdot \varphi_n\}$ in $C_c(U)$ converges to $f\cdot a$ in the uniform norm of $C_0(\mathcal{G})$. From Lemma \ref{l1}, $\{f\cdot \varphi_n\}$ in $C_c(U)$ converges to $f\cdot a$ in the reduced norm of $C_r^*(\mathcal{G})$. Also since $f\cdot\varphi_n\in C_c(\sigma(M))\subseteq M$ for each $n$ and $M$ is closed, we have $f\cdot a\in M$. Using the partition of unity, we also have $f\cdot a\in M$ for each $f\in C_c(\mathcal{G})$ and $a\in M$.
		
		For (iii) $\Rightarrow$ (i), assume that $f\cdot a\in M$ for each $f\in C_c(\mathcal{G})$ and $a\in M$. In order to show that $M$ is spectral, we only show that $C_c(U)$ is contained in $M$ for any nonempty open bisection $U\subseteq \sigma(M)$.
		For such a bisection $U$, we consider $C_0(U)\cap M$.
		
		From Lemma \ref{l1}, $C_0(U)\cap M$ is a closed subspace in $C_0(U)$ in the uniform norm and is closed in the point-wise multiplicative. For $a\in C_0(U)\cap M$, let $V:=\text{supp}'(a)$. Then $V\subseteq U$. We adopt the notations, $V_n,K_n, \psi_n$,  in the proof of Theorem \ref{th0}. Let $\varphi_n=\psi_n\cdot \overline{a}$ for each $n$. Then $\varphi_n\in C_c(V_{n+1}) \subseteq C_c(U)$, and $\lim\limits_{n\rightarrow \infty}\varphi_n =\overline{a}$ in both of the supremum norm and the reduced norm (see Lemma \ref{l1}). Let
		$$g_n(\gamma) = \begin{cases}
			\frac{\varphi_n(\gamma)}{a(\gamma)}, & \text{for } \gamma \in V, \\
			0, & \text{for } \gamma \in \mathcal{G} \setminus \mathrm{supp}(\varphi_n).
		\end{cases}$$
		Then $g_n\in  C_c(V_{n+1})$ and $\varphi_n=g_n\cdot a$ for each $n$. By the assumption, we have $\varphi_n=g_n\cdot a\in M$ for each $n$. Thus $\overline{a}\in M$, so $C_0(U)\cap M$ is a $C^*$-subalgebra of $C_0(U)$.
		
		Given two distinct $\gamma_1,\gamma_2\in U$, it follows that there exist $a_1,a_2\in M$ such that $a_1(\gamma_1)\neq 0$ and $a_2(\gamma_2)\neq 0$. Let $b_1=\frac{1-a_2(\gamma_1)}{a_1(\gamma_1)}a_1+a_2$ and $b_2=a_1+\frac{2-a_1(\gamma_2)}{a_2(\gamma_2)}a_2$. Then $b_1,b_2\in M$, $b_1(\gamma_1)=1$ and $b_2(\gamma_2)=2$. Choose $f_1,f_2\in C_c(U)$ such that $f_i(\gamma_j)=\delta_{ij}$ for $i,j=1,2$. By the assumption, we have $f=f_1\cdot b_1+f_2\cdot b_2\in C_0(U)\cap M$ and $f(\gamma_1)=1$, $f(\gamma_2)=2$. The Stone-Werstrass theorem gives that $C_0(U)\cap M=C_0(U)$, thus $C_0(U)\subseteq M$.
	\end{proof}
	
	The following result extends the applicability of the spectral theorem for diagonal bimodules of principal groupoid $C^*$-algebras.
	
	\begin{corollary}  Let $\mathcal{G}$ be a principal  \'{e}tale groupoid with the Fourier coefficients approximation property. Then each diagonal bimodule $M$ of $C_r^*(\mathcal{G})$ is spectral.

	\end{corollary}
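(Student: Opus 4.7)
The plan is to apply Theorem~\ref{p1} by verifying condition (iii): for every $f\in C_c(\mathcal{G})$ and every $a\in M$, the pointwise product $f\cdot a$ lies in $M$. By a partition of unity subordinate to open bisections of $\mathcal{G}$, the question reduces to the case $f\in C_c(U)$ for a single open bisection $U$. The structural input from principality is that the map $(r,s)\colon \mathcal{G}\to \mathcal{G}^{(0)}\times \mathcal{G}^{(0)}$ is globally injective, so that its restriction to any compact subset of $\mathcal{G}$ is a homeomorphism onto its image.

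Using the Fourier coefficients approximation property, fix $\phi_m\in C_c(\mathrm{supp}'(a))$ with $\phi_m\to a$ in reduced norm. For each $m$, define $g_m$ on the compact set $\mathrm{supp}(\phi_m)$ by $g_m=f$ on $\mathrm{supp}(\phi_m)\cap U$ and $g_m=0$ on $\mathrm{supp}(\phi_m)\setminus U$; continuity follows from $f$ vanishing on $\partial U$. Transporting $g_m$ across the embedding $(r,s)$ and invoking Tietze extension combined with an Urysohn cut-off produces $\tilde g_m\in C_c(\mathcal{G}^{(0)}\times \mathcal{G}^{(0)})$, compactly supported in a product set, such that $\tilde g_m\circ(r,s)\cdot\phi_m=f\cdot\phi_m$ identically on $\mathcal{G}$. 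Stone--Weierstrass then yields finite tensor sums $F_{m,k}=\sum_{j} b_{1,j,k}\otimes b_{2,j,k}$ with $b_{i,j,k}\in C_c(\mathcal{G}^{(0)})$ converging to $\tilde g_m$ in sup norm as $k\to\infty$. The associated element $T_{m,k}:=\sum_j b_{1,j,k}\,a\,b_{2,j,k}$ lies in $M$ by the bimodule property and equals $F_{m,k}\circ(r,s)\cdot a$ pointwise.

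To show $T_{m,k}\to f\cdot a$ in reduced norm along a suitable diagonal sequence $(m_n,k_n)$, I would write
\[
T_{m,k}-f\cdot a = F_{m,k}\!\circ\!(r,s)\cdot(a-\phi_m) \;+\; (F_{m,k}-\tilde g_m)\!\circ\!(r,s)\cdot\phi_m \;+\; f\cdot(\phi_m-a)
\]
and bound each summand: the first by $C_{m,k}\|a-\phi_m\|_{\mathrm{red}}$ with $C_{m,k}:=\sum_j\|b_{1,j,k}\|_\infty\|b_{2,j,k}\|_\infty$; the second by $N_m\|F_{m,k}-\tilde g_m\|_\infty\|\phi_m\|_\infty$, where $N_m$ is the bisection multiplicity of $\mathrm{supp}(\phi_m)$ (using that sup-norm convergence on a common compact support dominates reduced-norm convergence); and the third by $\|f\|_\infty\|a-\phi_m\|_{\mathrm{red}}$, since $f\cdot(\phi_m-a)$ is supported in the bisection $U$, where sup norm and reduced norm coincide. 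Selecting $k_n$ first to control the middle term, then choosing $m_n$ sufficiently far along the FCAP sequence so that $\|a-\phi_{m_n}\|_{\mathrm{red}}$ beats the projective norm $C_{m_n,k_n}$, drives all three terms to zero, giving $f\cdot a\in\overline{M}=M$.

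The main obstacle is the lack of a sup-norm bound on the reduced-norm multiplier action of tensor sums: the projective norm $C_{m,k}$ may grow uncontrollably as the sup-norm error $\|F_{m,k}-\tilde g_m\|_\infty$ shrinks. This is resolved by the flexibility of the FCAP, which permits the rate $\|a-\phi_m\|_{\mathrm{red}}\to 0$ to be prescribed \emph{after} $C_{m_n,k_n}$ has been determined, realised by passing to a nested subsequence of $\{\phi_m\}$.
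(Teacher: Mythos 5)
Your overall skeleton coincides with the paper's: both arguments reduce the corollary to verifying condition (iii) of Theorem~\ref{p1} (that $f\cdot a\in M$ for all $f\in C_c(\mathcal{G})$, $a\in M$), and both use a partition of unity to reduce to $f\in C_c(U)$ for a single open bisection $U$. The difference is that the paper does not re-prove this multiplier stability at all: it quotes \cite[Proposition 2.1(ii)]{YCX}, which establishes exactly this statement for principal \'etale groupoids (by a duality argument in the spirit of Muhly--Solel, not by tensor approximation), whereas you attempt a direct proof via the embedding $(r,s)$, Tietze extension, and Stone--Weierstrass. That re-derivation contains a genuine gap, and it sits precisely where you yourself flag the ``main obstacle.''

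The gap is the convergence $T_{m,k}\to f\cdot a$. The only available bound on the summand $F_{m,k}\circ(r,s)\cdot(a-\phi_m)$ is $C_{m,k}\|a-\phi_m\|_{\mathrm{red}}$, where $C_{m,k}=\sum_j\|b_{1,j,k}\|_\infty\|b_{2,j,k}\|_\infty$ is a projective-type norm, and sup-norm approximation gives no control on it: the projective tensor algebra $C_0(\mathcal{G}^{(0)})\widehat{\otimes}\,C_0(\mathcal{G}^{(0)})$ is a proper, non-closed subalgebra of $C_0(\mathcal{G}^{(0)}\times\mathcal{G}^{(0)})$, so forcing $\|F_{m,k}-\tilde g_m\|_\infty\to 0$ will in general force $C_{m,k}\to\infty$; indeed, for fixed $m$ the sequence $T_{m,k}$ need not converge at all as $k\to\infty$. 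Your proposed repair is circular: $k_n$ must be chosen \emph{after} $m_n$ (the middle term involves $\tilde g_{m_n}$ and $\phi_{m_n}$), so $C_{m_n,k_n}$ is a function of $\phi_{m_n}$; you cannot then choose $m_n$ so that $\|a-\phi_{m_n}\|_{\mathrm{red}}$ beats $C_{m_n,k_n}$, because passing to a later approximant $\phi_{m'}$ changes $\mathrm{supp}(\phi_{m'})$, hence $\tilde g_{m'}$, hence all the constants anew. The Fourier coefficients approximation property supplies a sequence whose error tends to zero; it does not allow the error to be prescribed relative to constants that depend on the approximant itself, and no nested subsequence breaks this dependency. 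This loss of norm control is not a technicality: boundedness of pointwise multipliers on $C_r^*(\mathcal{G})$ in terms of sup norms is exactly what fails in general (the Herz--Schur phenomenon). Tensor-sum arguments of your type do succeed when $\mathcal{G}^{(0)}$ is totally disconnected and one can use characteristic functions of disjoint compact open sets, where orthogonality replaces the projective bound --- essentially the setting of Theorem~\ref{Th1} in the paper; in the general principal case the needed lemma is obtained in \cite{MS,YCX} via the bipolar theorem, which is why the paper cites it rather than arguing directly.
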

	\begin{proof} A result of Yan-Chen-Xu asserts that if $\mathcal{G}$ is a principal  \'{e}tale groupoid  and $M$ is a diagonal bimodule of $C_r^*(\mathcal{G})$,  then $f\cdot a\in M$ for each $f\in C_c(B)$ and $a\in M$, where $B$ is any open bisection of $\mathcal{G}$ (\cite[Proposition 2.1 (ii)]{YCX}). An argument by the partition of unity can imply that $f\cdot a\in M$ for each $f\in C_c(\mathcal{G})$ and $a\in M$, thus it follows from Theorem \ref{p1} that $M$ is spectral when $\mathcal{G}$ has the Fourier coefficients approximation property.\end{proof}
	
	\begin{remark}
		Let $\mathcal{G}$ be the transformation groupoid arising from a free homeomorphism action of a weakly amenable countable group $\Gamma$ (or a countable group $\Gamma$ with the  Haagerup-Kraus approximation property) on a second countable, locally compact and Hausdorff space $X$, then $\mathcal{G}$ is principal and satisfies the Fourier coefficients approximation property, thus the proceeding corollary shows that each diagonal bimodule $M$ of $C_r^*(\mathcal{G})$ is spectral. This result is consistent with Theorem 5.7 in \cite{BEFPR}.
		
	\end{remark}
	
	For diagonal bimodules $M$ and $N$ of $C_r^*(\mathcal{G})$, let $MN$ be the closure of linear span of $\{ab: a\in M, b\in N\}$, which is a diagonal bimodule. Remark that, for two open subsets $P, Q$ of $\mathcal{G}$, $PQ$ is also open in $\mathcal{G}$.
	
	\begin{corollary}\label{c2}
		Let $\mathcal{G}$ be an \'{e}tale groupoid with the Fourier coefficients approximation property.
		
		{\rm (i)} If $M$ and $N$ are two nonzero spectral diagonal bimodules of  $C_r^*(\mathcal{G})$, then $MN$ is also a spectral diagonal bimodule and $\sigma(MN)=\sigma(M)\sigma(N)$.
		
		{\rm (ii)} For two nonempty open subsets $P, Q$ of $\mathcal{G}$, we have  $A(PQ)=A(P)A(Q)$.
		
	\end{corollary}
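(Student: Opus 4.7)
The plan is to prove part (ii) first and then deduce part (i) almost immediately. Once (ii) is established, the spectrality of $M$ and $N$ gives $M = A(\sigma(M))$ and $N = A(\sigma(N))$; applying (ii) yields $MN = A(\sigma(M))A(\sigma(N)) = A(\sigma(M)\sigma(N))$, which is spectral by construction, and Proposition \ref{p3} then yields $\sigma(MN) = \sigma(M)\sigma(N)$.

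For the inclusion $A(P)A(Q) \subseteq A(PQ)$ in (ii), I would use the convolution formula $j(ab)(\gamma) = \sum_{\sigma \in \mathcal{G}_{s(\gamma)}} j(a)(\gamma\sigma^{-1}) j(b)(\sigma)$ directly. For $\gamma \notin PQ$ and any composable $\sigma$, at least one of $\gamma\sigma^{-1} \notin P$ or $\sigma \notin Q$ must hold, since otherwise $\gamma = (\gamma\sigma^{-1})\sigma$ would itself lie in $PQ$; hence every summand vanishes. Thus $ab \in A(PQ)$ whenever $a \in A(P)$ and $b \in A(Q)$, and closing up the linear span preserves this.

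For the reverse inclusion, I would invoke Corollary \ref{c1} to replace $A(PQ)$ with $\overline{C_c(PQ)}$, reducing the task to showing $C_c(PQ) \subseteq A(P)A(Q)$. Given $h \in C_c(PQ)$, for each $\gamma \in \mathrm{supp}(h)$ I write $\gamma = p_\gamma q_\gamma$ with $p_\gamma \in P, q_\gamma \in Q$ and pick open bisections $U_\gamma \subseteq P$ and $V_\gamma \subseteq Q$ containing $p_\gamma$ and $q_\gamma$. Then $U_\gamma V_\gamma$ is an open bisection containing $\gamma$, and by compactness of $\mathrm{supp}(h)$ together with a partition of unity I reduce to the case of a single $h \in C_c(UV)$ with $U \subseteq P$ and $V \subseteq Q$ open bisections. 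The key step is to factor such an $h$ as a convolution $h = fg$ with $f \in C_c(U)$ and $g \in C_c(V)$. Because $U, V$ are bisections, each $\gamma \in UV$ has a unique decomposition $\gamma = p(\gamma) q(\gamma)$ with $p(\gamma) \in U$ and $q(\gamma) \in V$, and the convolution collapses on $UV$ to $(fg)(\gamma) = f(p(\gamma)) g(q(\gamma))$ (vanishing outside $UV$). I would pick $g \in C_c(V)$ equal to $1$ on a neighbourhood of $q(\mathrm{supp}(h))$ and define $f$ on $U$ by $f(p(\gamma)) := h(\gamma)$ on $p(\mathrm{supp}(h))$, extended by zero elsewhere.

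The main obstacle I expect is precisely this factorization. One must verify that $f$ extended by zero remains continuous on $U$, which holds because $p$ is a homeomorphism of $UV$ onto an open subset of $U$, the set $p(\mathrm{supp}(h))$ is compact, and $h$ vanishes on the boundary of its support. One must also confirm that the convolution $fg$ genuinely recovers $h$ without spurious contributions from $\gamma \notin UV$, which is automatic from the bisection hypothesis: it forces each fibre of the convolution sum to have at most one nonzero term, so $\mathrm{supp}'(fg) \subseteq UV$. Modulo these verifications, the factorization will deliver $C_c(UV) \subseteq A(P) \cdot A(Q)$, and the argument closes.
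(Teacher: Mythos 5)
Your proof is correct, but it runs the paper's argument in reverse. The paper proves (i) first and obtains (ii) as a two-line consequence: since $M$ and $N$ are spectral, Theorem \ref{p1}(ii) writes each as the closed span of elements whose open supports are bisections; products of two such elements again have bisection open supports and span $MN$ densely, so $MN$ is spectral by the same criterion, and $\sigma(MN)=\sigma(M)\sigma(N)$ is then computed directly from the convolution formula, the nontrivial inclusion $\sigma(M)\sigma(N)\subseteq\sigma(MN)$ using bisection-supported $a,b$ so that $(ab)(\gamma_1\gamma_2)=a(\gamma_1)b(\gamma_2)$ with no cancellation; (ii) follows because $A(P)$ and $A(Q)$ are spectral by Proposition \ref{p3}. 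You instead prove (ii) directly --- the easy inclusion from the convolution formula (valid with no approximation hypothesis at all), the reverse inclusion via Corollary \ref{c1}, partitions of unity, and the explicit factorization $C_c(UV)\subseteq C_c(U)\,C_c(V)$ for open bisections $U\subseteq P$, $V\subseteq Q$ --- and then deduce (i) purely formally. Your factorization does check out: $UV$ is an open bisection, the maps $p(\gamma)=(r|_U)^{-1}(r(\gamma))$ and $q(\gamma)=p(\gamma)^{-1}\gamma$ are continuous, so $q(\mathrm{supp}(h))$ is compact in $V$ and Urysohn supplies your $g$; your $f$ is continuous because $U$ is covered by the two open sets $p(UV)$, where $f=h\circ p^{-1}$, and $U\setminus p(\mathrm{supp}(h))$, where $f=0$; and injectivity of $s$ on $V$ collapses the convolution sum to the single term $\sigma=q(\gamma)$, giving $fg=h$ and $\mathrm{supp}'(fg)\subseteq UV$. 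What each route buys: the paper's is shorter and leverages the section's main theorem, avoiding all point-set verifications; yours is more constructive and self-contained, isolates a factorization lemma of independent interest, and sidesteps the cancellation analysis in the spectrum computation entirely, since once $MN=A(\sigma(M)\sigma(N))$ is known, $\sigma(MN)=\sigma(M)\sigma(N)$ and the spectrality of $MN$ both drop out of Proposition \ref{p3} (your argument also silently handles the degenerate case $PQ=\emptyset$, where both sides are $\{0\}$).
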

	
	\begin{proof} (i) For $a\in M$ and $b\in N$ such that the open supports of $a$ and $b$ are bisections in $\mathcal{G}$, then $ab$ is in $MN$ and its open support is a bisection. From (ii) in  Theorem \ref{p1}, $MN$ is a spectral diagonal bimodule.
		
		If $\gamma\in \sigma(MN)$, then there exist $a\in M$ and $b\in N$ such that $0\neq ab(\gamma)=\sum\limits_{\gamma_1\gamma_2=\gamma}a(\gamma_1)b(\gamma_2)$. Hence there are $\gamma_1\in \sigma(M)$ and $\gamma_2\in \sigma(N)$ such that $\gamma=\gamma_1\gamma_2\in\sigma(M)\sigma(N)$.
		For the other direction, suppose that $\gamma_1\in \sigma(M)$ and $\gamma_2\in \sigma(N)$ such that $s(\gamma_1)=r(\gamma_2)$. Since $M$ and $N$ are spectral, it follows from (ii) in  Theorem \ref{p1} that there are $a\in M$ and $b\in N$ whose open supports are bisections in $\mathcal{G}$ such that $a(\gamma_1)\neq 0$ and $b(\gamma_2)\neq 0$. Hence $(ab)(\gamma_1\gamma_2)=a(\gamma_1)b(\gamma_2)\neq 0$, which implies that $\gamma_1\gamma_2\in\sigma(MN)$. Consequently, $\sigma(MN)=\sigma(M)\sigma(N)$.
		
		(ii) For two open subsets $P, Q$ of $\mathcal{G}$, since $A(P)$ and $A(Q)$ are spectral, it follows from (i) and Proposition \ref{p3} that $\sigma(A(P)A(Q))=\sigma(A(P))\sigma(A(Q))=PQ$, thus $A(P)A(Q)=A(PQ)$.
	\end{proof}
	
	Closed two-sided ideals and closed subalgebras containing $C_r^*(\mathcal{G}^{(0)})$ of $C_r^*(\mathcal{G})$ are two special types of diagonal bimodules. We have the following characterizations for them.
	
	\begin{lemma} {\rm (i)} Let $\mathcal{H}\subseteq \mathcal{G}$ be an open subgroupoid. Then $\mathcal{H}$ is invariant under the multiplication in the sense that $\mathcal{H}\mathcal{G}\subseteq \mathcal{H}$ and $\mathcal{G}\mathcal{H}\subseteq \mathcal{H}$ if and only there is an open invariant subset $U$ of $\mathcal{G}^{(0)}$ such that $\mathcal{H}=\mathcal{G}|_U$.
		
		{\rm (ii)} If $I$ is a closed two-sided ideal of $C_r^*(\mathcal{G})$, then $\sigma(I)$ is an open subgroupoid of $\mathcal{G}$ which is invariant under the multiplication.  Moreover, $\sigma(I)\cap \mathcal{G}^{(0)}=s(\sigma(I))$ is invariant in $\mathcal{G}^{(0)}$.
	\end{lemma}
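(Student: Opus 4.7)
The plan is to dispatch part (i) by elementary manipulation and to reduce part (ii) to a single localization trick.

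For (i), the reverse direction uses the fact that invariance of $U$ (i.e.\ $U=r(\mathcal{G}_U)$) is equivalent to the biconditional $s(g)\in U\iff r(g)\in U$ for all $g\in\mathcal{G}$; from this it is immediate that $\mathcal{G}|_U$ absorbs arbitrary composable elements of $\mathcal{G}$ on either side. For the forward direction, I would set $U:=\mathcal{H}\cap\mathcal{G}^{(0)}$, which is open since $\mathcal{G}^{(0)}$ is clopen in $\mathcal{G}$. If $s(g)\in U$, then $\mathcal{G}\mathcal{H}\subseteq\mathcal{H}$ applied to $g=g\cdot s(g)$ gives $g\in\mathcal{H}$, so $r(g)\in\mathcal{H}\cap\mathcal{G}^{(0)}=U$; thus $U$ is invariant. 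The inclusion $\mathcal{H}\subseteq\mathcal{G}|_U$ is automatic from $\mathcal{H}$ being a subgroupoid of $\mathcal{G}$, and the reverse inclusion is the same computation $g=g\cdot s(g)\in\mathcal{G}\mathcal{H}\subseteq\mathcal{H}$ applied to $g\in\mathcal{G}|_U$.

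For (ii), closure of $\sigma(I)$ under inverse follows from $I$ being self-adjoint: if $a(\gamma)\ne0$, then $a^{\ast}\in I$ and $a^{\ast}(\gamma^{-1})=\overline{a(\gamma)}\ne0$. The crucial step is the one-sided invariance $\mathcal{G}\sigma(I)\subseteq\sigma(I)$. Given $g\in\mathcal{G}$ and $\gamma\in\sigma(I)$ with $s(g)=r(\gamma)$, fix $a\in I$ with $a(\gamma)\ne0$, choose an open bisection $U\ni g$, and take $f\in C_c(U)$ with $f(g)=1$. Since $I$ is an ideal, $fa\in I$, and I would evaluate the convolution
\[(fa)(g\gamma)=\sum_{\sigma\in\mathcal{G}_{s(\gamma)}}f\bigl(g\gamma\sigma^{-1}\bigr)\,a(\sigma).\]
The support condition on $f$ forces $g\gamma\sigma^{-1}\in U$; combined with the equality of ranges $r(g\gamma\sigma^{-1})=r(g)$ and injectivity of $r|_U$, this singles out $g\gamma\sigma^{-1}=g$, i.e.\ $\sigma=\gamma$. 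Hence $(fa)(g\gamma)=a(\gamma)\ne0$ and $g\gamma\in\sigma(I)$. The symmetric computation with $af$ yields $\sigma(I)\mathcal{G}\subseteq\sigma(I)$; combined with inverse-closure, these imply closure under composition and hence that $\sigma(I)$ is an open subgroupoid.

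For the last claim, $\gamma^{-1}\gamma=s(\gamma)\in\sigma(I)\mathcal{G}\subseteq\sigma(I)$ for any $\gamma\in\sigma(I)$, giving $s(\sigma(I))\subseteq\sigma(I)\cap\mathcal{G}^{(0)}$; the reverse inclusion is trivial. Setting $U:=\sigma(I)\cap\mathcal{G}^{(0)}$, any $g\in\mathcal{G}$ with $s(g)\in U$ satisfies $g=g\cdot s(g)\in\mathcal{G}\sigma(I)\subseteq\sigma(I)$, so $r(g)\in s(\sigma(I))=U$, proving $U$ is invariant. The only nontrivial step I foresee is the bisection-plus-bump-function trick at the heart of the second paragraph: the art lies in picking $f$ so that the a priori unwieldy convolution sum collapses to a single surviving term. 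With that reduction in hand, all remaining claims are formal consequences of the groupoid axioms and the ideal property.
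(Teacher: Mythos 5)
Your proposal is correct and follows essentially the same route as the paper: the same unit-absorption manipulations for (i) (you even make explicit the verification $\mathcal{H}=\mathcal{G}|_U$, which the paper leaves implicit), and for (ii) the same self-adjointness argument together with the bump-function-on-a-bisection trick that collapses the convolution sum to a single term. The only cosmetic differences are that you multiply on the left (using injectivity of $r|_U$) where the paper multiplies on the right, and you prove the final invariance claim directly instead of citing part (i).
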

	
	\begin{proof} (i) Assume that $\mathcal{H}\mathcal{G}\subseteq \mathcal{H}$ and $\mathcal{G}\mathcal{H}\subseteq \mathcal{H}$. Since $\mathcal{H}\subseteq \mathcal{G}$ is an open subgroupoid, $U:=s(\mathcal{H})=r(\mathcal{H})$ is open in $\mathcal{G}^{(0)}$. For $\gamma\in \mathcal{G}$ with $r(\gamma)\in U$, choose $\tau\in \mathcal{H}$ such that $s(\tau)=r(\gamma)$. Hence $\tau\gamma\in \mathcal{G}\mathcal{H}\subseteq \mathcal{H}$, and thus, $s(\gamma)=s(\tau\gamma)\in s(\mathcal{H})=U$. So, $U$ is invariant.
		
		On the other hand, let $\mathcal{H}=\mathcal{G}|_U$ for  an open invariant subset $U$ of $\mathcal{G}^{(0)}$. For $\tau\in \mathcal{G}$ and $\gamma\in \mathcal{H}$ with $s(\tau)=r(\gamma)$, we have $\tau\gamma\in \mathcal{G}\mathcal{H}$ and $s(\tau\gamma)=s(\gamma)\in U$. Since $U$ is invariant, we have $r(\tau\gamma)\in U$. Thus $\tau\gamma\in \mathcal{H}$, so $\mathcal{G}\mathcal{H}\subseteq \mathcal{H}$. Similarly, we have $\mathcal{H}\mathcal{G}\subseteq \mathcal{H}$.
		
		(ii) Let  $I$ be a closed two-sided ideal of $C_r^*(\mathcal{G})$.  For $\gamma\in \sigma(I)$, choose $a\in I$ such that $a(\gamma)\neq 0$. Remark that $I$ is self-adjoint. Thus $a^*\in I$, and $a^*(\gamma^{-1})=\overline{a(\gamma)}\neq 0$, so $\gamma^{-1}\in\sigma(I)$.
		
		For $\gamma\in\sigma(I)$ and $\tau\in \mathcal{G}$ with $s(\gamma)=r(\tau)$, choose $a\in I$ such that $a(\gamma)\neq 0$, and choose  an open bisection neighbourhood $B$ of $\tau$ and $f\in C_c(B)$ such that $f(\tau)=1$. Then $af\in I$ and $(af)(\gamma\tau)=a(\gamma)\neq 0$, so $\gamma\tau\in\sigma(I)$. Hence  $\sigma(I)\mathcal{G}\subseteq\sigma(I)$. By a similar argument, we have that $\mathcal{G}\sigma(I)\subseteq\sigma(I)$. Hence $\sigma(I)$ is invariant under the multiplication.
		
		It follows from (i) that  $\sigma(I)\cap \mathcal{G}^{(0)}=s(\sigma(I))$ is invariant in $\mathcal{G}^{(0)}$.\end{proof}
	
	Recall that a closed two-sided ideal $I$ of $C_r^*(\mathcal{G})$ is called a dynamical ideal if  there exists an open invariant subset $U$ of  $\mathcal{G}^{(0)}$ such that $I=C_r^*(\mathcal{G}|_U)$. The following is motivated by \cite{BCS}.
	
	\begin{theorem}\label{th2} Let $\mathcal{G}$ be an \'etale groupoid with the Fourier coefficients approximation property. If $I$ is a closed two-sided ideal of $C_r^*(\mathcal{G})$, then $A(\sigma(I))$ is the smallest dynamical ideal of $C_r^*(\mathcal{G})$ containing $I$. Moreover, $I$ is spectral as a diagonal bimodule if and only if $I$ is a dynamical ideal.
		
		Hence the map $\mathcal{H}\mapsto A(\mathcal{H})$ gives a one-to-one correspondence between open subgroupoids of $\mathcal{G}$ which are invariant under the multiplication, and the dynamical ideals of $C_r^*(\mathcal{G})$.
	\end{theorem}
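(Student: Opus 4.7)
The plan is to prove the theorem in four stages: (a) show $A(\sigma(I))$ is a dynamical ideal containing $I$; (b) show it is the smallest such; (c) deduce the spectrality equivalence; and (d) extract the bijective correspondence.

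For stage (a), I would invoke part (ii) of the preceding lemma to obtain $\sigma(I) = \mathcal{G}|_U$, where $U := s(\sigma(I))$ is an open invariant subset of $\mathcal{G}^{(0)}$. By Corollary \ref{c1} (which uses the Fourier coefficients approximation property), $A(\sigma(I))$ equals the reduced-norm closure of $C_c(\mathcal{G}|_U)$ inside $C_r^*(\mathcal{G})$. Appealing to the standard fact that for an open invariant $U \subseteq \mathcal{G}^{(0)}$ the inclusion $C_c(\mathcal{G}|_U) \hookrightarrow C_c(\mathcal{G})$ extends to an isometric $*$-embedding of $C_r^*(\mathcal{G}|_U)$ onto a closed two-sided ideal of $C_r^*(\mathcal{G})$, I would identify $A(\sigma(I)) = C_r^*(\mathcal{G}|_U)$, which is by definition a dynamical ideal. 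The containment $I \subseteq A(\sigma(I))$ is automatic, since each $a \in I$ satisfies $\mathrm{supp}'(a) \subseteq \sigma(I)$.

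For stage (b), let $J$ be any dynamical ideal of $C_r^*(\mathcal{G})$ containing $I$, so $J = C_r^*(\mathcal{G}|_V)$ for an open invariant $V \subseteq \mathcal{G}^{(0)}$. A direct check (using $C_c(\mathcal{G}|_V) \subseteq J$ and the vanishing of elements of $J$ off $\mathcal{G}|_V$) gives $\sigma(J) = \mathcal{G}|_V$; hence $I \subseteq J$ yields $\sigma(I) \subseteq \mathcal{G}|_V$, and therefore $A(\sigma(I)) \subseteq A(\mathcal{G}|_V) = J$. This proves the minimality statement. Stage (c) is then immediate: $I$ is spectral iff $I = A(\sigma(I))$, and the right-hand side is a dynamical ideal by stage (a); conversely, if $I$ is dynamical, applying the minimality argument with $J = I$ forces $I = A(\sigma(I))$, so $I$ is spectral.

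For stage (d), part (i) of the preceding lemma shows that open subgroupoids of $\mathcal{G}$ invariant under multiplication are precisely the sets $\mathcal{G}|_U$ for open invariant $U \subseteq \mathcal{G}^{(0)}$, so by stage (a) the assignment $\mathcal{H} \mapsto A(\mathcal{H})$ lands in the dynamical ideals. Surjectivity follows from the definition of dynamical ideal, and injectivity follows from Proposition \ref{p3}, since $\sigma(A(\mathcal{H})) = \mathcal{H}$. The one point of genuine difficulty in the whole plan is the identification used in stage (a) of the closure $\overline{C_c(\mathcal{G}|_U)}^{\|\cdot\|_r}$ inside $C_r^*(\mathcal{G})$ with the reduced $C^*$-algebra $C_r^*(\mathcal{G}|_U)$ of the restricted groupoid; this rests on the standard faithfulness of the canonical $*$-homomorphism $C_r^*(\mathcal{G}|_U) \to C_r^*(\mathcal{G})$ for open invariant $U$, a fact not proved in the excerpt that must be cited from the groupoid $C^*$-algebra literature.
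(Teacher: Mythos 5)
Your proposal is correct and follows essentially the same route as the paper's proof: both use the preceding lemma to write $\sigma(I)=\mathcal{G}|_U$ for the open invariant set $U=s(\sigma(I))$, identify $A(\sigma(I))=\overline{C_c(\mathcal{G}|_U)}=C_r^*(\mathcal{G}|_U)$ via the density result (you cite Corollary \ref{c1}, which is the right reference under the Fourier coefficients approximation hypothesis, where the paper cites Theorem \ref{th0}), and prove minimality by computing $\sigma(J)=\mathcal{G}|_V$ for any dynamical ideal $J=C_r^*(\mathcal{G}|_V)\supseteq I$. The standard isometric embedding $C_r^*(\mathcal{G}|_U)\hookrightarrow C_r^*(\mathcal{G})$ that you flag is likewise used without proof in the paper, so your treatment matches it and in fact spells out the spectrality equivalence and the bijectivity of $\mathcal{H}\mapsto A(\mathcal{H})$ more explicitly than the paper does.
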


	\begin{proof}  Let $I$ be a closed two-sided ideal of $C_r^*(\mathcal{G})$. From the proceeding lemma,  $U=s(\sigma(I))=r(\sigma(I))$ is invariant in $\mathcal{G}^{(0)}$ and $\sigma(I)=\mathcal{G}|_U$. Thus it follows from Theorem \ref{th0} that $A(\sigma(I))=\overline{C_c(\mathcal{G}|_U)}=C_r^*(\mathcal{G}|_U)$  is a dynamical ideal containing $I$. If $J$ is another dynamical ideal containing $I$, we let $J=C_r^*(\mathcal{G}|_V)$ for an open invariant subset $V$ of $\mathcal{G}^{(0)}$. Then $\mathcal{G}|_U=\sigma(I)\subseteq \sigma(J)=\mathcal{G}|_V$, so $A(\sigma(I))\subseteq J$. \end{proof}

	A subgroupoid $\mathcal{H}$ of an \'{e}tale groupoid $\mathcal{G}$ is called wide if $\mathcal{H}$ contains the unit space $\mathcal{G}^{(0)}$. The following is referred  to \cite[Theorem 3.3]{BEFPR}
	
	\begin{proposition}  Let $\mathcal{G}$ be a topologically principal, amenable and \'etale groupoid.  If $M$ is a $C^*$-subalgebra of $C_r^*(\mathcal{G})$ containing $C_0(\mathcal{G}^{(0)})$, then  $C_0(\mathcal{G}^{(0)})$ is a Cartan subalgebra of $M$ if and only if there exists a unique open wide subgroupoid $\mathcal{H}$ of $\mathcal{G}$ such that $M=C_r^*(\mathcal{H})$.
		In this case, $\mathcal{H}=\sigma(M)$, and $M$ is spectral as a diagonal bimodule.
		
		Consequently, the map $\mathcal{H}\mapsto A(\mathcal{H})$ gives a one-to-one correspondence between open wide subgroupoids of $\mathcal{G}$, and the $C^*$-subalgebra, containing $C_0^*(\mathcal{G}^{(0)})$ as a Cartan subalgebra, of $C_r^*(\mathcal{G})$.
	\end{proposition}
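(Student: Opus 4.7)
The plan is to derive this proposition by combining \cite[Theorem 3.3]{BEFPR} with the spectral machinery developed earlier in the paper, particularly Theorem \ref{th0} and Proposition \ref{p3}. Since $\mathcal{G}$ is amenable, it has the Fourier coefficients approximation property, which is the key bridge between the abstract Cartan classification and the ``spectral'' language used here.

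First, I would handle the forward direction. Assume $C_0(\mathcal{G}^{(0)})$ is a Cartan subalgebra of $M$. By \cite[Theorem 3.3]{BEFPR}, since $\mathcal{G}$ is a topologically principal, amenable \'etale groupoid, there exists a unique open wide subgroupoid $\mathcal{H}\subseteq\mathcal{G}$ such that $M=C_r^*(\mathcal{H})$ as a subalgebra of $C_r^*(\mathcal{G})$. The subgroupoid $\mathcal{H}$ inherits amenability from $\mathcal{G}$, so $\mathcal{H}$ itself satisfies the Fourier coefficients approximation property, and Theorem \ref{th0} applied to the open set $\mathcal{H}\subseteq\mathcal{G}$ gives
$$C_r^*(\mathcal{H})=\overline{C_c(\mathcal{H})}=A(\mathcal{H}).$$
Hence $M=A(\mathcal{H})$, and Proposition \ref{p3} yields $\sigma(M)=\sigma(A(\mathcal{H}))=\mathcal{H}$. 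Combining these, $M=A(\sigma(M))$, so $M$ is spectral as a diagonal bimodule.

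Next, for the reverse direction, suppose $M=C_r^*(\mathcal{H})$ for some open wide subgroupoid $\mathcal{H}$. Then $\mathcal{H}$ is an \'etale groupoid with unit space $\mathcal{G}^{(0)}$ which inherits topological principality and amenability from $\mathcal{G}$ (these properties pass to open wide subgroupoids). The Renault--Cartan theorem, in its formulation for topologically principal amenable \'etale groupoids (again as encoded in \cite[Theorem 3.3]{BEFPR}), then tells us that $C_0(\mathcal{H}^{(0)})=C_0(\mathcal{G}^{(0)})$ is a Cartan subalgebra of $C_r^*(\mathcal{H})=M$. The uniqueness of $\mathcal{H}$ in the statement follows from the identification $\mathcal{H}=\sigma(M)$ established above: if $\mathcal{H}_1,\mathcal{H}_2$ both represent $M$, then $\mathcal{H}_1=\sigma(M)=\mathcal{H}_2$.

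The final one-to-one correspondence is then a formal consequence. The map $\mathcal{H}\mapsto A(\mathcal{H})=C_r^*(\mathcal{H})$ from open wide subgroupoids of $\mathcal{G}$ into $C^*$-subalgebras of $C_r^*(\mathcal{G})$ admitting $C_0(\mathcal{G}^{(0)})$ as a Cartan subalgebra is surjective by the forward direction and well-defined by the reverse direction; injectivity follows from Proposition \ref{p3}, since $\sigma(A(\mathcal{H}))=\mathcal{H}$. The main subtlety I anticipate is the invocation of \cite[Theorem 3.3]{BEFPR}: one must check that the abstract Weyl groupoid associated to the Cartan pair $(M,C_0(\mathcal{G}^{(0)}))$ embeds concretely as an open wide subgroupoid of $\mathcal{G}$, rather than merely as an étale groupoid with the correct $C^*$-algebra. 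This relies on the topological principality of $\mathcal{G}$, which ensures that every normalizer of $C_0(\mathcal{G}^{(0)})$ in $C_r^*(\mathcal{G})$ has an open bisection as its open support, so that the Weyl groupoid of $M$ can be canonically realized inside $\mathcal{G}$; everything else is a direct application of the spectral results already proved.
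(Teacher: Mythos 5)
Your argument is correct in outline, but it takes a genuinely different route from the paper's. You treat \cite[Theorem 3.3]{BEFPR} as a black box producing the open wide subgroupoid $\mathcal{H}$ from the Cartan pair $(M, C_0(\mathcal{G}^{(0)}))$, and then use Theorem \ref{th0} and Proposition \ref{p3} only as glue: $C_r^*(\mathcal{H})=\overline{C_c(\mathcal{H})}=A(\mathcal{H})$, hence $\sigma(M)=\mathcal{H}$, spectrality, uniqueness, and the bijection. The paper instead proves the hard direction internally, and its proof is precisely the resolution of the ``subtlety'' you flag at the end and defer to \cite{BEFPR}: since $C_0(\mathcal{G}^{(0)})$ is Cartan in $M$, the normalizer $N(C_0(\mathcal{G}^{(0)}),M)=N(C_0(\mathcal{G}^{(0)}),C_r^*(\mathcal{G}))\cap M$ is dense in $M$; by \cite[Proposition 4.8]{Ren08} (topological principality) these normalizers are exactly the elements of $M$ whose open supports are bisections, so Theorem \ref{p1} ((ii) $\Rightarrow$ (i)) makes $M$ spectral; and a direct computation $ab(\gamma\tau)=a(\gamma)b(\tau)\neq 0$ for normalizers $a,b$ shows $\sigma(M)$ is an open wide subgroupoid, so $M=\overline{C_c(\sigma(M))}=C_r^*(\sigma(M))$. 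Thus the Weyl groupoid is never abstractly reconstructed and re-embedded: $\mathcal{H}$ is exhibited concretely as $\sigma(M)$, and the proposition is tied directly to the paper's spectral machinery rather than imported wholesale. Your route buys brevity, and is defensible since the paper itself attributes the statement to \cite[Theorem 3.3]{BEFPR}, but two of your side claims need repair: (a) you assert that amenability passes to open wide subgroupoids --- this is neither obvious (a $C^*$-subalgebra of a nuclear $C^*$-algebra need not be nuclear) nor needed, since Theorem \ref{th0} is applied to the amenable ambient groupoid $\mathcal{G}$ with $P=\mathcal{H}$, and the converse direction only requires that $\mathcal{H}$ inherits topological principality, Renault's Cartan theorem needing no amenability; (b) the identification of the abstract $C_r^*(\mathcal{H})$ with $\overline{C_c(\mathcal{H})}\subseteq C_r^*(\mathcal{G})$, which you use silently, rests on the standard fact that for a wide open subgroupoid the inclusion $C_c(\mathcal{H})\subseteq C_c(\mathcal{G})$ is isometric for the reduced norms (cf.\ \cite[Theorem 4.2]{BEFPR}); this should be cited or proved rather than assumed.
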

	\begin{proof} Assume that $M=C_r^*(\mathcal{H})$ for an open wide subgroupoid $\mathcal{H}$ of $\mathcal{G}$. Since each open wide subgroupoid of a topologically principal \'etale groupoid is also topologically principal, $C_0(\mathcal{G}^{(0)})$ is a Cartan subalgebra of $M$.
		
		On the other direction, assume that $C_0(\mathcal{G}^{(0)})$ is a Cartan subalgebra of $M$. Then the normalizer  $N(C_0(\mathcal{G}^{(0)}), M)$  of $C_0(\mathcal{G}^{(0)})$ in $M$ is dense in $M$. Remark that, if let $N(C_0(\mathcal{G}^{(0)}), C_r^*(\mathcal{G}))$  be the normalizer of $C_0(\mathcal{G}^{(0)})$ in $C_r^*(\mathcal{G})$, then $N(C_0(\mathcal{G}^{(0)}), M)=N(C_0(\mathcal{G}^{(0)}), C_r^*(\mathcal{G})) \cap M$.  Since $\mathcal{G}$ is  topologically principal, it follows from \cite[Proposition 4.8]{Ren08} that $N(C_0(\mathcal{G}^{(0)}), M)$ consists exactly of the elements of $M$ whose open support is an open bisection. From Theorem \ref{p1}, $M$ is spectral as a diagonal module, thus $M=\overline{C_c(\sigma(M))}$.
		
		We now claim that $\sigma(M)$ is a wide subgroupoid of $\mathcal{G}$. In fact, For $\gamma,\tau\in \sigma(M)$ satisfying $s(\gamma)=r(\tau)$, since the linear span of $N(C_0(\mathcal{G}^{(0)}), M)$ is dense in $M$, there exist $a,b\in N(C_0(\mathcal{G}^{(0)}), M)$ such that $a(\gamma)\neq 0$ and $b(\tau)\neq 0$. Noting that the open supports of $a$ and $b$ are bisections of $\mathcal{G}$, we have that
		$ab(\gamma\tau)=a(\gamma)b(\tau)\neq 0$, so $\gamma\tau\in\sigma(M)$. Clearly, $\sigma(M)$ contains $\mathcal{G}^{(0)}$ and is closed in the inverse, so it is an open wide subgroupoid of $\mathcal{G}$. Thus $M=C_r^*(\sigma(M))$.
	\end{proof}

	\section{ \'etale groupoids with  continuous $1$-cocycles }

	In this section, we let $\mathcal{G}$ be an \'etale groupoid satisfying  the Fourier coefficients approximation
	property and let $\Gamma$ be a countable  infinite  group with the identity $e$. If $c: \mathcal{G}\rightarrow \Gamma$ is a continuous $1$-cocycle, then $(\mathcal{G},c)$ is said to be  a $\Gamma$-graded groupoid. At this time, $\mathcal{G}_t:=c^{-1}(t)$ is an open-closed subset of $\mathcal{G}$ for each $t\in \Gamma$. In particular, $\mathcal{G}_e$ is an open-closed wide subgroupoid of $\mathcal{G}$.  By Corollary \ref{c1},   $A(\mathcal{G}_t)=\overline{C_c(\mathcal{G}_t)}$ for each $t\in \Gamma$. If $\Gamma$ and $\mathcal{G}_e$ are amenable, then it follows from \cite[Proposition 9.3]{Sp} (or \cite[Corollary 4.5]{RW}) that $\mathcal{G}$ is amenable.
	
	\subsection{Abelian groups} In this subsection, we assume that $\Gamma$ is abelian, and denote the group operation in $\Gamma$ as addition $+$, with the identity element as the zero element $0$. Let $\widehat{\Gamma}$ be the dual group of $\Gamma$. Recall that the cocycle $c$ gives rise to an automorphism action, denoted by $\alpha^c$, of $\widehat{\Gamma}$ on $C_r^*(\mathcal{G})$ characterized by
	$$\alpha^c_{\chi}(f)(\gamma)=\chi(c(\gamma)) f(\gamma)\quad \text{for $\chi\in \widehat{\Gamma}$, $f\in C_c(\mathcal{G})$ and $\gamma\in \mathcal{G}$}.$$
	By the continuity of the map $c$, the above equation holds for all $f\in C_r^*(\mathcal{G})$ \cite{Ren}. For $t\in \Gamma$ and $a\in C_r^*(\mathcal{G})$, define
	$$\Phi_t(a):=\int_{\widehat{\Gamma}} \overline{\chi(t)}\, \alpha^c_{\chi}(a) \,d\chi,$$
	and let
	$$C_r^*(\mathcal{G})_t:=\Phi_t(C_r^*(\mathcal{G}))$$
	be the range of $\Phi_t$. Then each $\Phi_t$ is a completely contractive and idempotent linear map on $C_r^*(\mathcal{G})$.

	In fact, for $t\in \Gamma$, $a\in C_r^*(\mathcal{G})$ and $\gamma\in \mathcal{G}$, we have
	$\Phi_t(a)(\gamma)= a(\gamma)$ when $c(\gamma)=t$, and $\Phi_t(a)(\gamma)=0$ when $c(\gamma)\neq t$. Hence, for each $t\in \Gamma$, $\Phi_t$ is exactly the bounded extension on $C_r^*(\mathcal{G})$ of the restriction map $f\in C_c(\mathcal{G})\mapsto f|_{\mathcal{G}_t}\in C_c(\mathcal{G}_t) (\subset C_c(\mathcal{G}))$. Thus, $C_r^*(\mathcal{G})_t=\overline{C_c(\mathcal{G}_t)}=A(\mathcal{G}_t)$.
	
	The following properties are from \cite{BFPR}.
	
	\begin{lemma}\label{l2.1}
		\begin{enumerate}
			\item[\rm (i)]  $C_r^*(\mathcal{G})_0$ is the fixed point algebra under the action $\alpha^c$, and $\Phi_0$ is a faithful conditional expectation from $C_r^*(\mathcal{G})$ onto $C_r^*(\mathcal{G})_0$.
			\item[\rm (ii)] $C_r^*(\mathcal{G})_t=\{a\in C_r^*(\mathcal{G}): \alpha^c_{\chi}(a)=\chi(t)a \mbox{ for all $\chi\in \widehat{\Gamma}$}\}$.
			\item[\rm (iii)] If $a\in C_r^*(\mathcal{G})_t$ and $b\in C_r^*(\mathcal{G})_s$, then $ab\in C_r^*(\mathcal{G})_{s+t}$.
			\item[\rm(iv)] If $a\in C_r^*(\mathcal{G})_t$ and $s\in \Gamma$, then $\Phi_s(a)=a$ when $s=t$, $\Phi_s(a)=0$ when $s\neq t$.
			\item[\rm (v)] If $a\in C_r^*(\mathcal{G})$, then $a\in \overline{\operatorname{span}}\{\Phi_t(a): t\in \Gamma\}$. Thus $C_r^*(\mathcal{G})=\overline{\operatorname{span}}\{C_r^*(\mathcal{G})_t: t\in \Gamma\}$.
			
		\end{enumerate}
	\end{lemma}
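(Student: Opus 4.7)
The guiding observation is the pointwise description already recorded in the paragraph preceding the lemma: for every $a\in C_r^*(\mathcal{G})$, $t\in\Gamma$ and $\gamma\in\mathcal{G}$, one has $\Phi_t(a)(\gamma)=a(\gamma)$ if $c(\gamma)=t$ and $\Phi_t(a)(\gamma)=0$ otherwise. This identifies $\Phi_t$ with the ``spectral cutoff'' by $\mathcal{G}_t$ and makes $C_r^*(\mathcal{G})_t=A(\mathcal{G}_t)=\overline{C_c(\mathcal{G}_t)}$ (using Corollary \ref{c1}). From this identification every statement apart from (v) reduces to either a one-line pointwise check or an application of the character orthogonality relation $\int_{\widehat{\Gamma}}\overline{\chi(s)}\chi(t)\,d\chi=\delta_{s,t}$ on the compact abelian group $\widehat{\Gamma}$.

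For (i), since $c$ takes $\mathcal{G}^{(0)}$ into $0$, $\alpha^c_\chi$ fixes $C_0(\mathcal{G}^{(0)})$, so $\Phi_0(a)(\gamma)=a(\gamma)$ iff $c(\gamma)=0$ whenever $a(\gamma)\neq 0$; this makes $\Phi_0$ a contractive idempotent onto the fixed point algebra, and comparing $\Phi_0$ with the canonical faithful conditional expectation $E:C_r^*(\mathcal{G})\to C_0(\mathcal{G}^{(0)})$ (which factors through $\Phi_0$ since $\mathcal{G}^{(0)}\subseteq\mathcal{G}_0$) transfers faithfulness of $E$ to $\Phi_0$. Part (ii) is the standard forward/backward argument: if $a=\Phi_t(b)$, pulling $\alpha^c_\chi$ inside the defining integral and changing variables gives $\alpha^c_\chi(a)=\chi(t)a$; conversely, if $\alpha^c_\chi(a)=\chi(t)a$ for all $\chi$, the orthogonality relation yields $\Phi_t(a)=a$. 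Part (iii) is immediate from (ii): $\alpha^c_\chi(ab)=\chi(t)\chi(s)\,ab=\chi(s+t)\,ab$. Part (iv) is a direct computation using (ii) and again the orthogonality relation.

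The only substantive work is in (v). For the second assertion, if $f\in C_c(\mathcal{G})$ then $c(\mathrm{supp}(f))$ is a compact subset of the discrete group $\Gamma$, hence finite; the pointwise formula for $\Phi_t$ then gives $f=\sum_{t\in c(\mathrm{supp}(f))}\Phi_t(f)$ as a \emph{finite} sum, and density of $C_c(\mathcal{G})$ in $C_r^*(\mathcal{G})$ in the reduced norm finishes this part. For the first assertion, a na\"{\i}ve truncation fails because the number of nontrivial terms can blow up under approximation, so the main obstacle is to produce finite linear combinations of the $\Phi_t(a)$ that converge to $a$ in norm. I would resolve this by a Fej\'er-type argument on the compact abelian group $\widehat{\Gamma}$: fix an approximate identity $\{k_n\}$ in $L^1(\widehat{\Gamma})$ consisting of positive trigonometric polynomials (so each $k_n=\sum_{t\in F_n}\widehat{k_n}(t)\,\chi\mapsto \chi(t)$ for a finite $F_n\subseteq\Gamma$), form the contractions
\[
F_n(a)=\int_{\widehat{\Gamma}}k_n(\chi)\,\alpha^c_\chi(a)\,d\chi=\sum_{t\in F_n}\widehat{k_n}(t)\,\Phi_t(a),
\]
and observe that because the orbit map $\chi\mapsto\alpha^c_\chi(a)$ is norm continuous and $\widehat{\Gamma}$ is compact, $F_n(a)\to a$ in the reduced norm, placing $a$ in $\overline{\operatorname{span}}\{\Phi_t(a):t\in\Gamma\}$.
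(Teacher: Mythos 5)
Your proof is correct, and it is worth noting that the paper itself offers no argument for this lemma at all: it simply says ``The following properties are from \cite{BFPR}.'' So relative to the text you have supplied the missing proof, and your route is precisely the standard one underlying that citation: (i)--(iv) reduce to the pointwise identity $\Phi_t(a)(\gamma)=a(\gamma)$ for $c(\gamma)=t$ (and $=0$ otherwise) together with translation invariance of Haar measure and the orthogonality relation on $\widehat{\Gamma}$; the second half of (v) follows from the clopen decomposition of $\mathrm{supp}(f)$ for $f\in C_c(\mathcal{G})$; and you correctly isolate the one substantive point, namely that naive truncation of the Fourier series does not prove the first half of (v), which you resolve by a Fej\'er-type summability kernel. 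Three small points deserve a sentence each if this were written out. First, the existence of the kernel: since $\Gamma$ is countable, $\widehat{\Gamma}$ is a metrizable compact abelian group, and one can take $k_n=|p_n|^2/\|p_n\|_{L^2}^2$ where $p_n$ is a trigonometric polynomial close in $L^2(\widehat{\Gamma})$ to a bump function supported near the trivial character; this yields nonnegative trigonometric polynomials with $\int k_n=1$ and mass concentrating at $1$, which is exactly what your estimate $\|F_n(a)-a\|\le\int k_n(\chi)\,\|\alpha^c_{\chi}(a)-a\|\,d\chi$ requires (this construction, or a citation to the literature on spectral subspaces for compact abelian group actions, should be recorded). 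Second, the norm continuity of the orbit map $\chi\mapsto\alpha^c_{\chi}(a)$ used there follows from the case $a\in C_c(\mathcal{G})$, where the grading support is finite, plus an $\varepsilon/3$ density argument, and should be flagged. Third, a harmless convention slip: with $\widehat{k}(t)=\int\overline{\chi(t)}\,k(\chi)\,d\chi$ one actually gets $F_n(a)=\sum_{t}\widehat{k_n}(t)\,\Phi_{-t}(a)$, still a finite linear combination of the $\Phi_s(a)$, so nothing breaks. Finally, in (i) you could bypass the canonical expectation $E$ entirely (faithfulness of $\Phi_0$ follows by applying a state $\phi$ with $\phi(a)>0$ to the integrand), though your factorization $E=E\circ\Phi_0$, valid because $\mathcal{G}^{(0)}\subseteq\mathcal{G}_0$, is equally sound given the standard faithfulness of $E$ on $C_r^*(\mathcal{G})$.
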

	
	\begin{definition}A diagonal bimodule $M$ of $C_r^*(\mathcal{G})$ is said to be invariant under the action $\alpha^c$ (or simply $\alpha^c$-invariant) if $\alpha^c_{\chi}(M)\subseteq M$ for all $\chi\in \widehat{\Gamma}$.
	\end{definition}
	From the definition of $\Phi_t$ and Lemma \ref{l2.1}(v), if a diagonal bimodule $M$ is $\alpha^c$-invariant then $\Phi_t(M)\subseteq M$ for each $t\in \Gamma$ and $M=\overline{\operatorname{span}}\{M\cap C_r^*(\mathcal{G})_t: t\in \Gamma\}$. From Lemma \ref{l2.1}(ii), $C_r^*(\mathcal{G})_t=A(\mathcal{G}_t)$ is an $\alpha^c$-invariant diagonal bimodule.

	\begin{proposition}\label{p4} Let  $(\mathcal{G},c)$ be a $\Gamma$-graded groupoid with $\Gamma$ abelian and $M$ be a diagonal bimodule of $C_r^*(\mathcal{G})$. Then the following statements hold.
		\begin{enumerate}
			\item[\rm (i)] If $M$ is $\alpha^c$-invariant, then $M$ is spectral if and only if $M\cap C_r^*(\mathcal{G})_t$ is spectral for each $t\in \Gamma$.
			\item[\rm (ii)] If $M$ is spectral, then $M$ is $\alpha^c$-invariant. Moreover, when $c^{-1}(0)=\mathcal{G}^{(0)}$, the converse also holds; that is, $M$ is spectral if and only if $M$ is $\alpha^c$-invariant.
		\end{enumerate}
	\end{proposition}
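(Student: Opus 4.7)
My plan is to exploit the spectral decomposition provided by the completely contractive projections $\Phi_t$ and the identification $C_r^*(\mathcal{G})_t = A(\mathcal{G}_t)$, reducing the study of $M$ to its graded components $M_t := M \cap C_r^*(\mathcal{G})_t$. The remark immediately following the definition of $\alpha^c$-invariance already records that $\alpha^c$-invariance gives $\Phi_t(M) = M_t$ and $M = \overline{\operatorname{span}}\{M_t : t \in \Gamma\}$; I will use this throughout.

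For part (i), I would first establish the pivotal identity
\[\sigma(M) = \bigcup_{t \in \Gamma} \sigma(M_t).\]
The nontrivial inclusion follows from $\alpha^c$-invariance: given $\gamma \in \sigma(M)$ with $c(\gamma) = t$, choose $a \in M$ with $a(\gamma) \neq 0$; then $\Phi_t(a) \in M_t$ and $\Phi_t(a)(\gamma) = a(\gamma) \neq 0$. For the forward direction, Corollary~\ref{c3}(ii) then gives
\[M_t = M \cap A(\mathcal{G}_t) = A(\sigma(M)) \cap A(\mathcal{G}_t) = A(\sigma(M) \cap \mathcal{G}_t) = A(\sigma(M_t)),\]
so each $M_t$ is spectral. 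For the converse, $M_t = A(\sigma(M_t))$ together with Corollary~\ref{c3}(i) yields
\[A(\sigma(M)) = \overline{\operatorname{span}}\{A(\sigma(M_t)) : t \in \Gamma\} = \overline{\operatorname{span}}\{M_t : t \in \Gamma\} = M.\]

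The forward implication of (ii) should be essentially automatic: for $\chi \in \widehat{\Gamma}$ and $a \in M$, the relation $\alpha^c_\chi(a)(\gamma) = \chi(c(\gamma))\, a(\gamma)$ together with $|\chi(c(\gamma))| = 1$ gives $\operatorname{supp}'(\alpha^c_\chi(a)) = \operatorname{supp}'(a) \subseteq \sigma(M)$, whence $\alpha^c_\chi(a) \in A(\sigma(M)) = M$.

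The main obstacle, and the only place where the stronger hypothesis $c^{-1}(0) = \mathcal{G}^{(0)}$ enters essentially, is the converse of (ii). My strategy is to prove the geometric claim that under this hypothesis each fibre $\mathcal{G}_t = c^{-1}(t)$ is itself an open bisection. Indeed, for $\gamma, \gamma' \in \mathcal{G}_t$ with $s(\gamma) = s(\gamma')$, the product $\gamma'\gamma^{-1}$ is composable and $c(\gamma'\gamma^{-1}) = 0$, forcing $\gamma'\gamma^{-1} \in \mathcal{G}^{(0)}$ and hence $\gamma' = \gamma$; the analogous argument applies to the range map. Granting this, every $a \in M_t$ has open support contained in the bisection $\mathcal{G}_t$, so its open support is itself a bisection. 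Since $M = \overline{\operatorname{span}}\{M_t : t \in \Gamma\}$ by $\alpha^c$-invariance, the linear span of elements of $M$ with bisection open supports is dense in $M$, and Theorem~\ref{p1}(ii) concludes that $M$ is spectral, completing (ii) by invoking (i).
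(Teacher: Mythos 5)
Your proposal is correct and follows essentially the same route as the paper's proof: the fibrewise identity $\sigma(M\cap C_r^*(\mathcal{G})_t)=\sigma(M)\cap\mathcal{G}_t$ via the projections $\Phi_t$, the lattice properties of $A(\cdot)$ from Corollary~\ref{c3} together with the decomposition $M=\overline{\operatorname{span}}\{M\cap C_r^*(\mathcal{G})_t\}$ for part (i), support-preservation of $\alpha^c_\chi$ for the forward direction of (ii), and the observation that $c^{-1}(0)=\mathcal{G}^{(0)}$ makes each $\mathcal{G}_t$ a bisection so that Theorem~\ref{p1} applies for the converse. The only (cosmetic) differences are that you spell out the verification that $\mathcal{G}_t$ is a bisection, which the paper merely asserts, and you apply Theorem~\ref{p1}(ii) to $M$ directly rather than to each graded piece $M\cap C_r^*(\mathcal{G})_t$ followed by part (i).
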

	
	\begin{proof}
		(i) Clearly, $\sigma(M\cap C_r^*(\mathcal{G})_t)\subseteq \sigma(M)\cap \mathcal{G}_t$ for each $t\in \Gamma$. Conversely, for $\gamma\in \sigma(M)\cap \mathcal{G}_t$, there exists $a\in M$ such that $a(\gamma)\neq 0$. Since $\Phi_t(a)(\gamma)=a(\gamma)$ and $\Phi_t(a)\in M\cap C_r^*(\mathcal{G})_t$, we have $\gamma\in \sigma(M\cap C_r^*(\mathcal{G})_t)$. Thus $\sigma(M\cap C_r^*(\mathcal{G})_t)=\sigma(M)\cap \mathcal{G}_t$. Consequently, it follows from Corollary \ref{c2} that $A(\sigma(M\cap C_r^*(\mathcal{G})_t))=A(\sigma(M)\cap \mathcal{G}_t)=A(\sigma(M))\cap C_r^*(\mathcal{G})_t$. Hence, from Lemma \ref{l2.1}(v), $M$ is spectral if and only if $M\cap C_r^*(\mathcal{G})_t$ is spectral for each $t\in \Gamma$.
		
		(ii) Assume that $M$ is spectral. Then $M=\overline{C_c(\sigma(M))}$. From the definition of $\alpha^c$, we have $\mathrm{supp}(\alpha^c_{\chi}(f))\subseteq \mathrm{supp}(f)$ for all $f\in C_c(\sigma(M))$ and $\chi\in \widehat{\Gamma}$, thus $C_c(\sigma(M))$ is $\alpha^c$-invariant, which implies that $M$ is $\alpha^c$-invariant.

		Now assume that $c^{-1}(0)=\mathcal{G}^{(0)}$. Then each $\mathcal{G}_t$ is a    clopen bisection of $\mathcal{G}$. If $M$ is $\alpha^c$-invariant, then, for each $t\in\Gamma$ and $a\in M\cap C_r^*(\mathcal{G})_t$, the open support of $a$ is contained in $\mathcal{G}_t$. From Theorem \ref{p1}, $M\cap C_r^*(\mathcal{G})_t$ is spectral, and thus, by (i), $M$ is spectral.
	\end{proof}
	\begin{remark} Assuming that $c^{-1}(0)\neq \mathcal{G}^{(0)}$, the $\alpha^c$-invariance of a diagonal bimodule $M$ does not necessarily imply the spectral property of $M$. Let $\Gamma$ be as before, $H$ be an amenable countable infinite group, and $\mathcal{G}:=\Gamma\times H$ be the direct product of $\Gamma$ and $H$. Let $c(t,h)=t$ be the projection to the first coordinate from $\mathcal{G}$ onto $\Gamma$. Then $c^{-1}(0)=\{0\}\times H$. For two distinct elements $h_1,h_2$ in $H$, let
		$M=\mathbb{C}(\lambda_{(0,h_1)}+\lambda_{(0,h_2)})$ be the one-dimensional subspace generated by the operator $\lambda_{(0,h_1)}+\lambda_{(0,h_2)}$ in $C_r^*(\mathcal{G})$, where $\lambda$ is the left regular representation of $\mathcal{G}$. One can verify that $\alpha^c_{\chi}(\lambda_{(0,h_1)}+\lambda_{(0,h_2)})=\lambda_{(0,h_1)}+\lambda_{(0,h_2)}$ for each $\chi\in\widehat{\Gamma}$, so $M$ is $\alpha^c$-invariant. However, $M$ is not spectral by Example \ref{ex1}.
		
	\end{remark}

	\begin{theorem}\label{Th1}
		Assume that the unit space $\mathcal{G}^{(0)}$ of $\mathcal{G}$ is totally disconnected, $\Gamma$ has a subsemigroup $\mathcal{P}$ containing the zero element $0$ and satisfying $\Gamma=\mathcal{P}-\mathcal{P}$, and the kernel $\mathcal{G}_0$ of $c$ is amenable and principal. If for each $p\in \mathcal{P}$, there exists a cover $\{U_{p,i} \mid i\in \Lambda_{p}\}$ of $\mathcal{G}_p$ by compact open bisections of $\mathcal{G}$ such that the collection $\{U_{p,i}U_{q,j}^{-1} \mid p,q\in \mathcal{P}, i\in \Lambda_{p}, j\in \Lambda_{q}, s(U_{p,i})=s(U_{q,j})\}$ forms an open cover of $\mathcal{G}$, then a diagonal bimodule $M$ of $C_{r}^{*}(\mathcal{G})$ is spectral if and only if $M$ is $\alpha^c$-invariant.
	\end{theorem}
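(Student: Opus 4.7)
The plan is to prove the nontrivial direction -- that $\alpha^c$-invariance implies spectrality; the reverse is Proposition \ref{p4}(ii). Assuming $M$ is $\alpha^c$-invariant, I first invoke Proposition \ref{p4}(i) to reduce to showing that $M\cap C_r^*(\mathcal{G})_t$ is spectral for every $t\in\Gamma$. By the equivalence (i)$\,\Leftrightarrow\,$(iii) of Theorem \ref{p1}, this further reduces to checking that $f\cdot a\in M$ for all $f\in C_c(\mathcal{G})$ and $a\in M\cap C_r^*(\mathcal{G})_t$. Using a partition of unity subordinate to the hypothesized cover $\{U_{p,i}U_{q,j}^{-1}\}$, I may assume $f\in C_c(W)$ for a single bisection $W=U_{p,i}U_{q,j}^{-1}$ with $s(U_{p,i})=s(U_{q,j})$. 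Since $(f\cdot a)(\gamma)$ vanishes unless $c(\gamma)=t$, the case $p-q\ne t$ gives $f\cdot a=0\in M$, so I focus on $W\subseteq\mathcal{G}_t$.

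The main structural observation, and the step I expect to be the crux of the argument, is that principality of $\mathcal{G}_0$ forces the map $(r,s)\colon\mathcal{G}_t\to\mathcal{G}^{(0)}\times\mathcal{G}^{(0)}$ to be \emph{injective}: if $\gamma_1,\gamma_2\in\mathcal{G}_t$ share both range and source, then $\gamma_1^{-1}\gamma_2$ is an isotropy element of $\mathcal{G}_0$, hence a unit, forcing $\gamma_1=\gamma_2$. Since $(r,s)$ is a local homeomorphism by the étale structure of $\mathcal{G}$, it is therefore a homeomorphism from $\mathcal{G}_t$ onto its open image in $\mathcal{G}^{(0)}\times\mathcal{G}^{(0)}$. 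The remainder of the proof is bookkeeping that exploits the total disconnectedness of $\mathcal{G}^{(0)}$.

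Concretely, for each $\eta\in W$ I would choose, using total disconnectedness, compact open sets $V_1\ni r(\eta)$ and $V_2\ni s(\eta)$ in $\mathcal{G}^{(0)}$ small enough that $V_1\times V_2$ is contained in the $(r,s)$-image of some open neighbourhood of $\eta$ inside $W$. Then $N_\eta:=(r,s)^{-1}(V_1\times V_2)\cap\mathcal{G}_t$ is a compact open subset of $W$, and a short computation gives the identity
$$1_{V_1}\,a\,1_{V_2}=a\cdot 1_{N_\eta},$$
where the left-hand side is a triple convolution and the right-hand side a pointwise product. Since $1_{V_1},1_{V_2}\in C_c(\mathcal{G}^{(0)})$ and $M$ is a $C_0(\mathcal{G}^{(0)})$-bimodule, the element $a\cdot 1_{N_\eta}$ lies in $M$. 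To conclude, I would cover $\mathrm{supp}(f)\cap W$ by finitely many such sets $N_{\eta_k}$, refine to a pairwise disjoint compact open partition of $\mathrm{supp}(f)$ inside $W$, and approximate $f$ uniformly by step functions supported on this partition; the associated linear combinations of the $a\cdot 1_{N_{\eta_k}}$ remain in $M$ and converge to $f\cdot a$ uniformly. Because every element in play is supported on the bisection $W$, the supremum and reduced norms coincide by Lemma \ref{l1}(i), so the limit $f\cdot a$ lies in the closed bimodule $M$, as desired.
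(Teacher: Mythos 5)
Your reduction to $f\in C_c(W)$ with $W=U_{p,i}U_{q,j}^{-1}\subseteq\mathcal{G}_t$, your observation that principality of $\mathcal{G}_0$ makes $(r,s)$ injective on each $\mathcal{G}_t$, and the identity $1_{V_1}a1_{V_2}=a\cdot 1_{N_\eta}$ are all correct. But the step you yourself flag as the crux is false: $(r,s)\colon\mathcal{G}\to\mathcal{G}^{(0)}\times\mathcal{G}^{(0)}$ is \emph{not} a local homeomorphism, and its restriction to $\mathcal{G}_t$ is not a homeomorphism onto an open image. \'Etaleness makes $r$ and $s$ separately local homeomorphisms; the pair map is merely continuous and injective on $\mathcal{G}_t$, and its image is typically nowhere open. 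Take $\mathcal{G}=X\rtimes\mathbb{Z}$ with $X$ the Cantor set and $c$ the canonical cocycle (all hypotheses of the theorem hold with $\mathcal{P}=\mathbb{N}$): then $(r,s)(\mathcal{G}_1)$ is the graph of the acting homeomorphism, which has empty interior in $X\times X$, so no nonempty product $V_1\times V_2$ is ever contained in the $(r,s)$-image of any neighbourhood inside $W$ (that would force $V_1$ to be a singleton). Consequently the sets you actually obtain, $N_\eta=(r,s)^{-1}(V_1\times V_2)\cap\mathcal{G}_t$, are in general neither contained in $W$ nor bisections: they pick up every $\gamma\in\mathcal{G}_t\setminus W$ with range in $V_1$ and source in $V_2$, and such elements are unavoidable, e.g.\ for $X\rtimes\mathbb{Z}^2$ with a free minimal action and first-coordinate cocycle (which also satisfies all hypotheses, with $\mathcal{G}_0=X\rtimes(\{0\}\times\mathbb{Z})$ principal and amenable but strictly larger than $\mathcal{G}^{(0)}$): there $\mathcal{G}_t$ is an infinite disjoint union of graphs whose $(r,s)$-images accumulate on one another by recurrence.

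This kills the endgame in two ways. First, since $N_{\eta_k}\not\subseteq W$, the approximants $a\cdot 1_{N_{\eta_k}}$ are not supported on the bisection $W$, so Lemma \ref{l1}(i) does not apply and uniform convergence no longer yields reduced-norm convergence. Second, your step functions are constant on sets of the initial topology that $(r,s)$ induces on $\mathcal{G}_t$, which is strictly coarser than the groupoid topology in the examples above, so they cannot even approximate $f$ (extended by zero off $W$) uniformly on $\mathrm{supp}'(a)$. This is exactly the point where the paper pays a nonelementary price that your scheme tries to bypass: it conjugates each piece $P\cap\mathcal{G}_{p-q}\cap\mathcal{G}^{r(U_{p,i})}_{r(U_{q,j})}$ into the kernel via the homeomorphism $\gamma\mapsto U_{p,i}^{-1}\gamma U_{q,j}$ of Lemma \ref{l3.1}, transports $M$ to a diagonal bimodule $N$ of $C_r^*(\mathcal{G}_0)$ using $\pi(f)=\chi_{U_{p,i}}f\chi_{U_{q,j}}^*$, and then invokes the Muhly--Solel spectral theorem for the amenable principal groupoid $\mathcal{G}_0$, whose proof rests on the bipolar theorem and representation theory rather than pointwise approximation. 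Note that if your product-neighbourhood argument were valid, then taking $\Gamma$ trivial it would give an elementary proof of the spectral theorem for all ample principal amenable groupoids; the obstruction above is precisely why no such proof is available, and why the case $c^{-1}(0)=\mathcal{G}^{(0)}$ (where each $\mathcal{G}_t$ genuinely is a bisection, Proposition \ref{p4}(ii)) is so much easier than the present theorem.
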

	
	We firstly have the following observation.
	\begin{lemma}\label{l3.1}
		Let $\mathcal{F}_{0}=\{(p,q,i,j) \mid p,q\in \mathcal{P}, \ i\in \Lambda_{p}, \ j\in \Lambda_{q},\ s(U_{p,i})=s(U_{q,j})\}$. For $(p,q,i,j)\in \mathcal{F}_{0}$, let $W=s(U_{p,i})$.  Then the map $\psi: \mathcal{G}_{p-q}\cap \mathcal{G}^{r(U_{p,i})}_{r(U_{q,j})} \rightarrow \mathcal{G}_{0}\cap\mathcal{G}^{W}_{W}$  defined by $\gamma \mapsto U_{p,i}^{-1} \gamma U_{q,j}$ is a homeomorphism.
	\end{lemma}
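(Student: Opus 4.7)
The plan is to establish $\psi$ as a homeomorphism by (i) verifying it is well-defined, (ii) constructing an explicit inverse, and (iii) deducing continuity in both directions from the bisection hypothesis.

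First I would unpack the ``multiplication notation.'' Since $U_{p,i}$ is an open bisection, the range map $r$ restricts to a homeomorphism from $U_{p,i}$ onto $r(U_{p,i})$; let $\rho_{p,i}: r(U_{p,i}) \to U_{p,i}$ denote its continuous inverse. Similarly define $\rho_{q,j}$. For $\gamma$ in the domain $\mathcal{G}_{p-q}\cap \mathcal{G}^{r(U_{p,i})}_{r(U_{q,j})}$, set $\alpha = \rho_{p,i}(r(\gamma))\in U_{p,i}$ and $\beta = \rho_{q,j}(s(\gamma))\in U_{q,j}$. Then $\alpha$ is the unique element of $U_{p,i}$ composable with $\gamma$ on the left (via $\alpha^{-1}$), and $\beta$ is the unique element of $U_{q,j}$ composable with $\gamma$ on the right, so $\psi(\gamma) := \alpha^{-1}\gamma \beta$ is an unambiguous element of $\mathcal{G}$. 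Checking ranges and sources using $s(U_{p,i})=s(U_{q,j})=W$ gives $r(\psi(\gamma)) = s(\alpha)\in W$ and $s(\psi(\gamma)) = s(\beta)\in W$, while the cocycle identity yields $c(\psi(\gamma)) = -p + (p-q) + q = 0$. Hence $\psi(\gamma)\in \mathcal{G}_0\cap \mathcal{G}^W_W$, and the formula $\psi(\gamma) = \rho_{p,i}(r(\gamma))^{-1}\gamma\, \rho_{q,j}(s(\gamma))$ is continuous as a composition of the continuous range/source maps, the continuous inverses $\rho_{p,i},\rho_{q,j}$, inversion, and groupoid multiplication.

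Next I would construct the inverse by exactly the symmetric procedure. For $\eta\in \mathcal{G}_0\cap \mathcal{G}^W_W$, define $\phi(\eta) := \rho_{p,i}(s(\alpha))\cdot \eta\cdot \rho_{q,j}(s(\beta))^{-1}$ where here I pick the unique elements of $U_{p,i}$ with source $r(\eta)\in W$ and of $U_{q,j}$ with source $s(\eta)\in W$; more symmetrically, let $s|_{U_{p,i}}^{-1}: W \to U_{p,i}$ and $s|_{U_{q,j}}^{-1}: W \to U_{q,j}$ be the continuous source-inverses on the bisections, and set $\phi(\eta) = s|_{U_{p,i}}^{-1}(r(\eta))\cdot \eta \cdot (s|_{U_{q,j}}^{-1}(s(\eta)))^{-1}$. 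A direct range/source/cocycle computation exactly mirroring the one above shows $\phi(\eta)\in \mathcal{G}_{p-q}\cap \mathcal{G}^{r(U_{p,i})}_{r(U_{q,j})}$, and continuity follows for the same reasons as for $\psi$.

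Finally I would verify $\phi\circ\psi = \mathrm{id}$ and $\psi\circ\phi=\mathrm{id}$. This is a one-line calculation since the $\alpha\in U_{p,i}$ appearing in $\psi(\gamma)=\alpha^{-1}\gamma\beta$ satisfies $s(\alpha) = r(\psi(\gamma))$, so reapplying $\phi$ reinserts the same $\alpha$ on the left (by uniqueness of the element of $U_{p,i}$ with prescribed source), and analogously $\beta$ on the right; cancelling gives $\gamma$ back. The reverse composition is identical. There is no serious obstacle here: the content is entirely bookkeeping for the bisection property combined with the cocycle relation, and the only place to be careful is in spelling out that the ``product of sets'' $U_{p,i}^{-1}\gamma U_{q,j}$ in the statement really denotes this single element, which is precisely what the hypotheses $r(\gamma)\in r(U_{p,i})$, $s(\gamma)\in r(U_{q,j})$ and the bisection property guarantee.
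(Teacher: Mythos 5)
Your proposal is correct and takes essentially the same route as the paper's proof: well-definedness via the bisection property (the set $U_{p,i}^{-1}\gamma U_{q,j}$ is a singleton), the symmetric map $\eta \mapsto U_{p,i}\,\eta\,U_{q,j}^{-1}$ as the explicit inverse, and the homeomorphism conclusion. You simply make explicit two points the paper leaves implicit, namely the continuity of both maps as compositions involving the continuous inverses $(r|_{U_{p,i}})^{-1}$ and $(s|_{U_{q,j}})^{-1}$, and the cocycle computation $c(\psi(\gamma))=-p+(p-q)+q=0$.
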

	
	\begin{proof}
		For each $\gamma\in \mathcal{G}_{p-q}\cap \mathcal{G}^{r(U_{p,i})}_{r(U_{q,j})}$, we have that $c(\gamma)=p-q$ and there exist   elements $\tau\in U_{p,i}$ and $\sigma\in U_{q,j}$ such that $r(\gamma)=r(\tau)$ and $s(\gamma)=r(\sigma)$. Hence the product $\tau^{-1}\gamma\sigma$ is well-defined and lies in $\mathcal{G}_{0}\cap\mathcal{G}^{W}_{W}$.
		Now suppose $\tau_{1}, \tau_{2}\in U_{p,i}^{-1}\gamma U_{q,j}$. Then we can write $\tau_{1}=\sigma_{1}^{-1}\gamma\sigma_{1}'$ and $\tau_{2}=\sigma_{2}^{-1}\gamma\sigma_{2}'$ for some $\sigma_{1}, \sigma_{2}\in U_{p,i}$ and $\sigma_{1}', \sigma_{2}'\in U_{q,j}$. Since $r(\sigma_{1})=r(\gamma)=r(\sigma_{2})$ and $s(\sigma_{1}')=s(\gamma)=s(\sigma_{2}')$, it follows that $\sigma_{1}=\sigma_{2}$ and $\sigma_{1}'=\sigma_{2}'$. Therefore, $\tau_{1}=\tau_{2}$, which shows that $U_{p,i}^{-1}\gamma U_{q,j}$ is a single point, and thus, the map $\psi(\gamma)= U_{p,i}^{-1} \gamma U_{q,j}=\tau^{-1}\gamma\sigma$ is well-defined.
		Moreover, the map from $\mathcal{G}_{0}\cap\mathcal{G}^{W}_{W}$ onto $\mathcal{G}_{p-q}\cap \mathcal{G}^{r(U_{p,i})}_{r(U_{q,j})}$, defined by $\sigma \mapsto U_{p,i}\sigma U_{q,j}^{-1}$,  is the inverse of $\psi$. Therefore, $\psi$ is a homeomorphism.\end{proof}

	\begin{proof}[\bf{The proof of Theorem \ref{Th1}}]
		By the hypothesis, $\mathcal{G}$ is amenable. From Proposition \ref{p4}, it suffices to show that if $M$ is an $\alpha^c$-invariant nonzero diagonal bimodule of $C_{r}^{*}(\mathcal{G})$, then it is spectral.
		
		Let $P=\sigma(M)$. For each $(p,q,i,j)\in \mathcal{F}_{0}$, define $\Omega(p,q,i,j)=U_{p,i}U_{q,j}^{-1}$. Then the collection $\{\Omega(p,q,i,j) \mid (p,q,i,j)\in \mathcal{F}_{0}\}$ forms an open cover for $\mathcal{G}$, thus $P=\bigsqcup_{(p,q,i,j)\in \mathcal{F}_{0}}[P\cap \mathcal{G}_{p-q}\cap \mathcal{G}^{r(U_{p,i})}_{r(U_{q,j})}]$.
		In order to show that $M$ is spectral, by the partition of unity, it suffices to show that $C_{c}(P\cap \mathcal{G}_{p-q}\cap \mathcal{G}^{r(U_{p,i})}_{r(U_{q,j})})\subseteq M$ for every $(p,q,i,j)\in \mathcal{F}_{0}$.

		To prove this, given $(p,q,i,j)\in \mathcal{F}_{0}$ with $P(p,q,i,j)=P\cap \mathcal{G}_{p-q}\cap \mathcal{G}^{r(U_{p,i})}_{r(U_{q,j})}\neq \emptyset$, let $W=s(U_{p,i})=s(U_{q,j})$ and $Q=\psi(P(p,q,i,j))$, where $\psi$ is the homeomorphism from $\mathcal{G}_{p-q}\cap \mathcal{G}^{r(U_{p,i})}_{r(U_{q,j})}$ onto $\mathcal{G}_{0}\cap\mathcal{G}^{W}_{W}$ in Lemma \ref{l3.1}. Then $\psi$ induces a $*$-isomorphism $\pi: C_{c}(\mathcal{G}_{0}\cap\mathcal{G}_{W}^{W})\to C_{c}(\mathcal{G}_{p-q}\cap\mathcal{G}^{r(U_{p,i})}_{r(U_{q,j})})$ defined by $\pi(f)=f\circ\psi$. One can verify that $\pi(f)=\chi_{U_{p,i}}f\chi^{*}_{U_{q,j}}$ for $f\in C_{c}(\mathcal{G}_{0}\cap\mathcal{G}_{W}^{W})$. Thus $\pi$ extends to a bounded linear map, still denoted by $\pi$, from $A(\mathcal{G}_{0}\cap\mathcal{G}_{W}^{W})$ onto $A(\mathcal{G}_{p-q}\cap\mathcal{G}^{r(U_{p,i})}_{r(U_{q,j})})$. In particular, we have $\pi(C_{c}(Q))=C_{c}(P(p,q,i,j))$, and hence $\pi(A(Q))=A(P(p,q,i,j))$.

		Let $\theta_{p,i}: r(U_{p,i})\rightarrow s(U_{p,i})$ and $\theta_{q,j}: r(U_{q,j})\rightarrow s(U_{q,j})$ be the canonical homeomorphisms defined by the compact open bisections $U_{p,i}$ and $U_{q,j}$, respectively. For each $\phi\in C_0(\mathcal{G}^{(0)})$, we have $\phi_{p,i}:=\phi\circ\theta_{p,i}$ and $\phi_{q,j}:=\phi\circ\theta_{q,j}$ are in $C(r(U_{p,i}))$ and $C(r(U_{q,j}))$, respectively, and thus both are in $C_0(\mathcal{G}^{(0)})$. From the proof of Lemma \ref{l3.1}, we can verify that $\pi(\phi a)=\phi_{p,i}\pi(a)$ and $\pi(a \phi)=\pi(a)\phi_{q,j}$ for $\phi\in C_0(\mathcal{G}^{(0)})$ and $a\in A(\mathcal{G}_{0}\cap\mathcal{G}_{W}^{W})$.

		Now define $N:=\{a\in A(\mathcal{G}_{0}\cap\mathcal{G}_{W}^{W}) \mid \pi(a)\in M\cap A(P(p,q,i,j))\}$. Clearly, $N=\{a\in A(Q) \mid \pi(a)\in M\}$. Then by the above paragraph, $N$ is a diagonal bimodule in $C_{r}^{*}(\mathcal{G}_{0})$. Since $\mathcal{G}_{0}$ is amenable and principal, the Spectral Theorem for bimodules of Muhly-Solel implies that $N=A(\sigma(N))$. As $N\subseteq A(Q)$, it follows that $\sigma(N)\subseteq Q$. Conversely, let $\eta\in Q$, and choose $\gamma\in P(p,q,i,j)$ such that $\eta=\psi(\gamma)$. Note that $P(p,q,i,j)\subseteq P$. Choose $a\in M$ such that $a(\gamma)\neq 0$. Let $b=\chi_{U_{p,i}}^{*}\Phi_{p-q}(a)\chi_{U_{q,j}}$. Since $M$ is $\alpha^c$-invariant, we have $\Phi_{p-q}(a)\in M$ and $\mathrm{supp}'(b)\subseteq U_{p,i}^{-1}(c^{-1}(p-q)\cap \sigma(M))U_{q,j}\subseteq Q$. Thus $b\in A(Q)$ and $\pi(b)=\chi_{U_{p,i}}\chi_{U_{p,i}}^{*}\Phi_{p-q}(a)\chi_{U_{q,j}}\chi_{U_{q,j}}^*\in M$. So $b\in N$. By calculation, we have $b(\eta)=\Phi_{p-q}(a)(\gamma)=a(\gamma)\neq 0$. Hence $\eta\in \sigma(N)$, and thus $\sigma(N)=Q$. Consequently, $C_{c}(P(p,q,i,j))\subseteq \pi(A(Q))=\pi(A(\sigma(N)))=\pi(N)\subseteq M$.
	\end{proof}
	
	\subsection{Nonabelian groups} In this subsection, we replace dual actions with coactions to extend all results from the previous subsection to gradings by nonabelian groups.
	
	As before, let $(\mathcal{G},c)$ be a $\Gamma$-graded groupoid, where $\Gamma$ is nonabelian. Denote by $\lambda$ the regular representation of $\Gamma$ on $\ell^2(\Gamma)$, let $\{\delta_t\}_{t\in\Gamma}$ be the canonical orthonormal basis of $\ell^2(\Gamma)$, and let $\mathrm{Tr}$ be the canonical tracial state on $C_r^*(\Gamma)$, defined by $\mathrm{Tr}(a)=\langle a\delta_e,\delta_e\rangle$ for $a\in C_r^*(\Gamma)$. Let $\delta_{\Gamma}: C_{r}^{*}(\Gamma)\to C_{r}^{*}(\Gamma)\otimes C_{r}^{*}(\Gamma)$ be the comultiplication such that 
	$$\delta_{\Gamma}(\lambda_{t})=\lambda_{t}\otimes\lambda_{t} \quad \text{for } t\in \Gamma.$$
	By \cite[Lemma 6.1]{CRST}, the cocycle $c$ gives rise to a coaction $\delta_{c}$ of $\Gamma$ on $C_{r}^{*}(\mathcal{G})$ such that $\delta_{c}(f)=f\otimes\lambda_{t}$ for $f\in C_{c}(\mathcal{G})$ with $\mathrm{supp}(f)\subseteq c^{-1}(t)$ and $t\in \Gamma$. That is, $\delta_{c}: C_{r}^{*}(\mathcal{G})\to C_{r}^{*}(\mathcal{G})\otimes_{\min}C_{r}^{*}(\Gamma)$ is a nondegenerate $*$-homomorphism satisfying the coaction identity 
	$$(\delta_{c}\otimes \mathrm{id})\circ\delta_{c}=(\mathrm{id}\otimes\delta_{\Gamma})\circ\delta_{c}.$$ 
	The spectral subspaces of $C_{r}^{*}(\mathcal{G})$ for $\delta_c$ are defined as 
	$$C_{r}^{*}(\mathcal{G})_{t}:=\{a\in C_{r}^{*}(\mathcal{G}) : \delta_c(a)=a\otimes\lambda_{t}\} \quad \text{for } t\in \Gamma.$$ 
	Then the map $\Phi_{t}: C_{r}^{*}(\mathcal{G})\to C_{r}^{*}(\mathcal{G})_{t}$, defined by 
	$$\Phi_{t}(a)=(\mathrm{id}_{C_{r}^{*}(\mathcal{G})}\otimes \mathrm{Tr})\left(\delta_c(a)(1_{C_{r}^{*}(\mathcal{G})}\otimes\lambda_{t^{-1}})\right)$$
	for $a\in C_{r}^{*}(\mathcal{G})$, is a norm-decreasing linear map such that $\Phi_{t}(a)=a$ for $a\in C_{r}^{*}(\mathcal{G})_{t}$ and $\Phi_{t}(a)=0$ for $a\in C_{r}^{*}(\mathcal{G})_{s}$ with $s\neq t$. In particular, $\Phi_e$ is a conditional expectation from $C_r^*(\mathcal{G})$ onto $C_{r}^{*}(\mathcal{G})_{e}$.
	
	We list some properties of $\delta_c$ and the maps $\Phi_t$ as follows.
	
	\begin{lemma}\label{l2.3}
		\begin{enumerate}
			\item[\rm (i)] $C_r^*(\mathcal{G})_t=A(c^{-1}(t))$ for all $t\in\Gamma$.
			\item[\rm (ii)] $\Phi_t(a)(\gamma)=a(\gamma)$ for all $a\in C_r^*(\mathcal{G})$, $t\in\Gamma$ and $\gamma\in c^{-1}(t)$.
			\item[\rm (iii)] If $a\in C_r^*(\mathcal{G})_t$ and $b\in C_r^*(\mathcal{G})_s$, then $ab\in C_r^*(\mathcal{G})_{ts}$.
			\item[\rm (iv)] If $a\in C_r^*(\mathcal{G})$, then $a\in \overline{\operatorname{span}}\{\Phi_t(a): t\in \Gamma\}$. Thus $C_r^*(\mathcal{G})=\overline{\operatorname{span}}\{C_r^*(\mathcal{G})_t: t\in \Gamma\}$.
			
		\end{enumerate}
	\end{lemma}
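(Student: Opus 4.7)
My plan is to prove the four parts in the order (ii), (i), (iii), (iv), with (ii) serving as the technical workhorse.

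For (ii), I first compute $\Phi_t$ on elementary functions $f\in C_c(c^{-1}(s))$: the defining property $\delta_c(f)=f\otimes \lambda_s$ reduces the formula to $\Phi_t(f)=(\mathrm{id}\otimes \mathrm{Tr})(f\otimes \lambda_{st^{-1}})=\mathrm{Tr}(\lambda_{st^{-1}})f$, and $\mathrm{Tr}(\lambda_{st^{-1}})=\langle \delta_{st^{-1}},\delta_e\rangle$ equals $1$ when $s=t$ and $0$ otherwise. Since each $c^{-1}(s)$ is clopen in $\mathcal{G}$ and $c(\mathrm{supp}(f))$ is a finite subset of the discrete group $\Gamma$ for any $f\in C_c(\mathcal{G})$, I decompose $f=\sum_{s\in F}f|_{c^{-1}(s)}$ as a finite sum of elementary functions; by linearity, $\Phi_t(f)(\gamma)=f(\gamma)$ for $\gamma\in c^{-1}(t)$ and both sides vanish otherwise. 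To extend to arbitrary $a\in C_r^*(\mathcal{G})$, I use density of $C_c(\mathcal{G})$ together with continuity in the reduced norm of both $\Phi_t$ (norm-decreasing by construction) and the pointwise evaluation $b\mapsto j(b)(\gamma)$ (bounded by $\|j(b)\|_{\infty}\leq \|b\|_r$).

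Part (i) then falls out. The inclusion $A(c^{-1}(t))\subseteq C_r^*(\mathcal{G})_t$ follows because $C_c(c^{-1}(t))\subseteq C_r^*(\mathcal{G})_t$ by the defining property of $\delta_c$, while Corollary \ref{c1} identifies $A(c^{-1}(t))=\overline{C_c(c^{-1}(t))}$ and $C_r^*(\mathcal{G})_t$ is norm-closed as the kernel of the continuous map $a\mapsto \delta_c(a)-a\otimes \lambda_t$. For the reverse inclusion, given $a\in C_r^*(\mathcal{G})_t$, the orthogonality $\Phi_s(a)=0$ for $s\neq t$ stated in the lemma's setup combines with (ii) to give $a(\gamma)=\Phi_{c(\gamma)}(a)(\gamma)=0$ whenever $c(\gamma)\neq t$, placing $a\in A(c^{-1}(t))$.

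Part (iii) is an immediate multiplicativity calculation: $\delta_c(ab)=\delta_c(a)\delta_c(b)=(a\otimes \lambda_t)(b\otimes \lambda_s)=ab\otimes \lambda_{ts}$, so $ab\in C_r^*(\mathcal{G})_{ts}$. For (iv), the second assertion is easy: each $f\in C_c(\mathcal{G})$ decomposes as $f=\sum_{t\in F}\Phi_t(f)$ (finite, via the clopen partition from (ii)) with summands in $C_r^*(\mathcal{G})_t=A(c^{-1}(t))$ by (i), and $C_c(\mathcal{G})$ is dense in $C_r^*(\mathcal{G})$. I expect the first assertion---that $a$ itself lies in the closed linear span of its own Fourier components $\{\Phi_t(a):t\in\Gamma\}$---to be the main obstacle, since naive partial summation $\sum_{t\in F}\Phi_t$ is not uniformly bounded. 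To handle it, I would mirror the Fejér-type argument behind the abelian analogue Lemma \ref{l2.1}(v), replacing integration against $\widehat{\Gamma}$ by the slice $(\mathrm{id}\otimes \varphi_n)\circ\delta_c$ against a net of finitely supported multipliers $\varphi_n\in A(\Gamma)$ approximating the counit, and use continuity of $\Phi_t$ on $C_r^*(\mathcal{G})$ to pass to the norm limit $a=\lim_n\sum_{t\in \mathrm{supp}(\varphi_n)}\varphi_n(t)\Phi_t(a)$.
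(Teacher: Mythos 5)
The paper offers no proof of this lemma at all: it is presented as a list of known properties of $\delta_c$ and $\Phi_t$, with the nontrivial assertions implicitly delegated to \cite{CRST} and \cite{BFPR} (as with the abelian analogue, Lemma \ref{l2.1}). Measured against that, your arguments for (i), (ii), (iii) and the second assertion of (iv) are correct and are exactly the routine verifications the paper suppresses: the computation $\Phi_t(f)=\mathrm{Tr}(\lambda_{st^{-1}})f$ on $C_c(c^{-1}(s))$, the finite clopen decomposition of $f\in C_c(\mathcal{G})$ (valid since $c(\mathrm{supp}(f))$ is compact in a discrete group, hence finite), the density extension using $|j(b)(\gamma)|\leq\|b\|_r$, the closedness of $C_r^*(\mathcal{G})_t$ as a kernel together with Corollary \ref{c1} and the section's standing Fourier coefficients approximation hypothesis, and multiplicativity of $\delta_c$ for (iii) are all sound.

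The genuine gap is where you anticipated it: the first assertion of (iv). Your proposed repair---slicing $\delta_c$ against a net of finitely supported functions $\varphi_n$ on $\Gamma$ that implement uniformly bounded multipliers and converge pointwise to $1$---presupposes that such a net exists, and for an arbitrary countable discrete group it does not: this is precisely the Haagerup--Kraus approximation property (or weak amenability), which fails, e.g., for groups containing $SL_3(\mathbb{Z})$ by Lafforgue--de la Salle. Nor can the standing FCA hypothesis on $\mathcal{G}$ substitute: FCA places $a$ in $\overline{C_c(\mathrm{supp}'(a))}$, whose graded pieces $C_c(\mathrm{supp}'(\Phi_t(a)))$ are far larger than the one-dimensional spaces $\mathbb{C}\Phi_t(a)$, so it does not force $a$ into $\overline{\operatorname{span}}\{\Phi_t(a):t\in\Gamma\}$ (the two coincide only in degenerate cases such as $\mathcal{G}=\Gamma$). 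So as written your sketch does not prove (iv) in the lemma's stated generality; to be fair, the paper supplies no argument either, and in all of its applications (Theorem \ref{th3} via amenability of $\mathcal{G}$, and the examples of Subsection 4.3, where $\Gamma$ is amenable) enough amenability is present. In that regime your plan closes cleanly: take $\xi_n=|F_n|^{-1/2}\chi_{F_n}$ for a F\o lner sequence $\{F_n\}$, set $\varphi_n(t)=\langle\lambda_t\xi_n,\xi_n\rangle$ (finitely supported, positive definite, $\varphi_n\to 1$ pointwise), and observe that $S_n:=(\mathrm{id}\otimes\omega_{\xi_n})\circ\delta_c$ is a slice by a \emph{state}, hence contractive, satisfies $S_n(b)=\varphi_n(s)b$ for $b\in C_r^*(\mathcal{G})_s$, and agrees with the finite sum $\sum_t\varphi_n(t)\Phi_t$ on $C_c(\mathcal{G})$ and hence everywhere by continuity; then the uniform bound $\|S_n\|\leq 1$, pointwise convergence on each spectral subspace, and the already-proved density $C_r^*(\mathcal{G})=\overline{\operatorname{span}}\{C_r^*(\mathcal{G})_t\}$ give $S_n(a)\to a$ with $S_n(a)\in\operatorname{span}\{\Phi_t(a)\}$. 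You should either add such an amenability/AP hypothesis to (iv) or flag explicitly that the general statement is being quoted rather than proved.
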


	\begin{definition}
		A closed subspace $M$ of $C_{r}^{*}(\mathcal{G})$ is said to be invariant under the coaction $\delta_c$ (or simply $\delta_c$-invariant) if $\delta_c(M) \subseteq M \otimes_{\min} C_{r}^{*}(\Gamma)$, where $M \otimes_{\min} C_{r}^{*}(\Gamma) = \overline{\operatorname{span}}\{a \otimes b \mid a \in M, b \in C_{r}^{*}(\Gamma)\}$.
		
	\end{definition}

	\begin{lemma}
		If a closed subspace $M$ is $\delta_c$-invariant, then $\Phi_{t}(M) \subseteq M$ for all $t \in \Gamma$.\end{lemma}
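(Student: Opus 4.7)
The plan is to unwind the explicit formula defining $\Phi_{t}$ and track, step by step, the subspace to which each intermediate quantity belongs. Fix $a\in M$. Since $M$ is $\delta_{c}$-invariant, we have $\delta_{c}(a)\in M\otimes_{\min}C_{r}^{*}(\Gamma)$ by the very definition of invariance. The whole task is then to show that the operation $x\mapsto (\mathrm{id}\otimes\mathrm{Tr})(x(1\otimes\lambda_{t^{-1}}))$ carries $M\otimes_{\min}C_{r}^{*}(\Gamma)$ into $M$, and apply this to $x=\delta_{c}(a)$.

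First I would check that right multiplication by $1_{C_{r}^{*}(\mathcal{G})}\otimes\lambda_{t^{-1}}$ preserves $M\otimes_{\min}C_{r}^{*}(\Gamma)$: on an elementary tensor $a'\otimes b$ with $a'\in M$ and $b\in C_{r}^{*}(\Gamma)$ one has
\[
(a'\otimes b)(1\otimes\lambda_{t^{-1}})=a'\otimes (b\lambda_{t^{-1}}),
\]
which lies in the algebraic tensor product $M\otimes C_{r}^{*}(\Gamma)$; since right multiplication by a fixed element is norm-continuous, this persists on the norm closure. Hence $\delta_{c}(a)(1\otimes\lambda_{t^{-1}})\in M\otimes_{\min}C_{r}^{*}(\Gamma)$.

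Second, I would verify the standard slice-map property: the bounded linear map
\[
\mathrm{id}_{C_{r}^{*}(\mathcal{G})}\otimes\mathrm{Tr}\colon C_{r}^{*}(\mathcal{G})\otimes_{\min}C_{r}^{*}(\Gamma)\longrightarrow C_{r}^{*}(\mathcal{G})
\]
sends $M\otimes_{\min}C_{r}^{*}(\Gamma)$ into $M$. On an elementary tensor $a'\otimes b$ with $a'\in M$ one has $(\mathrm{id}\otimes\mathrm{Tr})(a'\otimes b)=\mathrm{Tr}(b)\,a'\in M$; finite linear combinations remain in $M$, and since $\mathrm{id}\otimes\mathrm{Tr}$ is bounded (as $\mathrm{Tr}$ is a state) and $M$ is norm-closed, the inclusion extends to the closure. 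Applying this to $\delta_{c}(a)(1\otimes\lambda_{t^{-1}})$ yields $\Phi_{t}(a)\in M$. The only mildly delicate point is this last slice-map lemma, but it is a routine closure argument using continuity and the closedness of $M$; no amenability or representation-theoretic input is needed.
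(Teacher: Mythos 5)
Your proof is correct and follows essentially the same route as the paper: both arguments factor $\Phi_{t}$ as a slice map applied to $\delta_{c}(a)$, and your functional $b\mapsto\mathrm{Tr}(b\lambda_{t^{-1}})$ coincides with the paper's $b\mapsto\langle b\delta_{e},\delta_{t}\rangle$ used to define the slice map $S_{t}$ with $\Phi_{t}=S_{t}\delta_{c}$. The only difference is that you verify the slice-map property on elementary tensors and pass to the closure by hand, while the paper cites \cite{BFPR} for the existence of $S_{t}$.
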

	
	\begin{proof}
		By \cite{BFPR}, for each $t \in \Gamma$, there is a slice map $S_{t}: C_{r}^{*}(\mathcal{G}) \otimes_{\min} C_{r}^{*}(\Gamma) \to C_{r}^{*}(\mathcal{G})$ characterized by $S_{t}(a \otimes b) = \langle b(\delta_{e}), \delta_{t} \rangle a$ for $a \in C_{r}^{*}(\mathcal{G})$ and $b \in C_{r}^{*}(\Gamma)$ such that $\Phi_{t} = S_{t} \delta_c$.
		Note that $S_{t}(M \otimes C_{r}^{*}(\Gamma)) \subseteq M$. It follows from the invariance of $M$ that $\Phi_{t}(M) \subseteq M$ for each $t \in \Gamma$.
	\end{proof}

	\begin{proposition}\label{p5} Let  $M$ be a diagonal bimodule of $C_r^*(\mathcal{G})$. Then the following statements hold.
		\begin{enumerate}
			\item[\rm (i)] If $M$ is $\delta_c$-invariant, then $M$ is spectral if and only if $M\cap C_r^*(\mathcal{G})_t$ is spectral for each $t\in \Gamma$.
			\item[\rm (ii)] If $M$ is spectral, then $M$ is $\delta_c$-invariant. Moreover, when $c^{-1}(e)=\mathcal{G}^{(0)}$, the converse also holds; that is, $M$ is spectral if and only if $M$ is $\delta_c$-invariant.
		\end{enumerate}
	\end{proposition}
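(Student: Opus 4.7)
The plan is to mirror the proof of Proposition \ref{p4}, with the coaction $\delta_c$ and the linear maps $\Phi_t$ playing the roles previously occupied by the dual action $\alpha^c$ and its spectral projections. The key ingredients I intend to invoke are Lemma \ref{l2.3}, Corollary \ref{c1} (so that $A(P)=\overline{C_c(P)}$), Corollary \ref{c2} (for how intersections of open sets interact with $A(\cdot)$), Theorem \ref{p1} (to recognize spectral bimodules supported on bisections), and the lemma immediately preceding the proposition that guarantees $\Phi_t(M)\subseteq M$ whenever $M$ is $\delta_c$-invariant.

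For (i), I will first establish the identity $\sigma(M\cap C_r^*(\mathcal{G})_t)=\sigma(M)\cap \mathcal{G}_t$. The inclusion $\subseteq$ is immediate from Lemma \ref{l2.3}(i), since $C_r^*(\mathcal{G})_t=A(\mathcal{G}_t)$. For $\supseteq$, given $\gamma\in\sigma(M)\cap\mathcal{G}_t$ and $a\in M$ with $a(\gamma)\neq 0$, the preceding lemma gives $\Phi_t(a)\in M\cap C_r^*(\mathcal{G})_t$, while Lemma \ref{l2.3}(ii) says $\Phi_t(a)(\gamma)=a(\gamma)\neq 0$. Combining this with Corollary \ref{c2} yields $A(\sigma(M\cap C_r^*(\mathcal{G})_t))=A(\sigma(M))\cap C_r^*(\mathcal{G})_t$. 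Now, the $\delta_c$-invariance of $M$ together with Lemma \ref{l2.3}(iv) implies $M=\overline{\operatorname{span}}\{M\cap C_r^*(\mathcal{G})_t:t\in\Gamma\}$, while the analogous decomposition holds for $A(\sigma(M))$. Comparing the two slice-by-slice gives the equivalence claimed in (i).

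For (ii), assume first $M$ is spectral, so $M=\overline{C_c(\sigma(M))}$ by Corollary \ref{c1}. Fix $f\in C_c(\sigma(M))$; since $c$ is continuous and $\Gamma$ is discrete, the image $c(\mathrm{supp}(f))$ is a finite subset $F$ of $\Gamma$, and each $\mathcal{G}_t$ is clopen, so $f_t:=f\cdot\chi_{\mathcal{G}_t}\in C_c(\sigma(M)\cap\mathcal{G}_t)\subseteq M$ and $f=\sum_{t\in F}f_t$. By the defining property of $\delta_c$, $\delta_c(f_t)=f_t\otimes\lambda_t\in M\otimes_{\min}C_r^*(\Gamma)$, hence $\delta_c(f)\in M\otimes_{\min}C_r^*(\Gamma)$, and by density and continuity of $\delta_c$ the bimodule $M$ is $\delta_c$-invariant. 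For the converse under $c^{-1}(e)=\mathcal{G}^{(0)}$, observe that in that case each $\mathcal{G}_t$ is an open bisection: if $\gamma_1,\gamma_2\in\mathcal{G}_t$ satisfy $s(\gamma_1)=s(\gamma_2)$, then $\gamma_1\gamma_2^{-1}\in c^{-1}(e)=\mathcal{G}^{(0)}$, forcing $\gamma_1=\gamma_2$ (similarly for $r$). Thus every element of $M\cap C_r^*(\mathcal{G})_t=M\cap A(\mathcal{G}_t)$ has bisection open support, so by Theorem \ref{p1} each slice $M\cap C_r^*(\mathcal{G})_t$ is spectral, and part (i) then yields spectrality of $M$. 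The main obstacle I anticipate is the "spectral $\Rightarrow$ invariant" step: one must realize a general $f\in C_c(\sigma(M))$ as a \emph{finite} sum of $t$-homogeneous pieces on which $\delta_c$ acts by the simple rule $a\mapsto a\otimes\lambda_t$, and this is precisely where the compactness of $\mathrm{supp}(f)$, the clopenness of the fibers $\mathcal{G}_t$, and discreteness of $\Gamma$ must be leveraged together. Once this decomposition is in hand, the rest is a routine translation of the abelian argument.
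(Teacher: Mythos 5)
Your proposal is correct and takes essentially the same route as the paper: the paper's own proof of Proposition~\ref{p5} consists only of the remark that it is ``similar to that of Proposition~\ref{p4},'' and your argument is precisely that adaptation, supplying the one genuinely new ingredient (the finite decomposition $f=\sum_{t\in F}f_t$ of $f\in C_c(\sigma(M))$ into homogeneous pieces with $\delta_c(f_t)=f_t\otimes\lambda_t$, replacing the abelian observation $\mathrm{supp}(\alpha^c_{\chi}(f))\subseteq\mathrm{supp}(f)$) together with the bisection argument for $\mathcal{G}_t$ when $c^{-1}(e)=\mathcal{G}^{(0)}$ and the appeal to Theorem~\ref{p1}. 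One cosmetic point: the intersection identity $A(\sigma(M)\cap\mathcal{G}_t)=A(\sigma(M))\cap A(\mathcal{G}_t)$ is Corollary~\ref{c3}(ii) rather than Corollary~\ref{c2}, though the paper's proof of Proposition~\ref{p4} cites Corollary~\ref{c2} for the same step, so you have merely inherited its slip.
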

	
	\begin{proof}
		The proof is similar to that of Proposition 4.3.
	\end{proof}

	\begin{theorem}\label{th3}
		Assume that the unit space $\mathcal{G}^{(0)}$ of $\mathcal{G}$ is totally disconnected, $\Gamma$ has a subsemigroup $\mathcal{P}$ containing the identity $e$ and satisfying $\Gamma=\mathcal{P}\mathcal{P}^{-1}=\mathcal{P}^{-1}\mathcal{P}$, and the kernel $\mathcal{G}_e$ of $c$ is amenable and principal. If, for each $p\in \mathcal{P}$, there exists a cover $\{U_{p,i} \mid i\in \Lambda_{p}\}$ of $\mathcal{G}_p$ by compact open bisections of $\mathcal{G}$ such that the collection $\{U_{p,i}U_{q,j}^{-1} \mid p,q\in \mathcal{P}, i\in \Lambda_{p}, j\in \Lambda_{q}, s(U_{p,i})=s(U_{q,j})\}$ forms an open cover of $\mathcal{G}$, then a diagonal bimodule $M$ of $C_{r}^{*}(\mathcal{G})$ is spectral if and only if $M$ is $\delta_c$-invariant.
	\end{theorem}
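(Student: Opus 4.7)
The plan is to mirror the proof of Theorem \ref{Th1}, replacing $\alpha^c$-invariance by $\delta_c$-invariance and the abelian additive operation by the nonabelian multiplicative one; the structural steps all survive because the maps $\Phi_t$ continue to act as grading-component projections by Lemma \ref{l2.3}(ii), and $\delta_c$-invariance still forces $\Phi_t(M) \subseteq M$ by the lemma preceding Proposition \ref{p5}.

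First, Proposition \ref{p5}(ii) gives the forward implication, so let $M$ be a nonzero $\delta_c$-invariant diagonal bimodule of $C_r^*(\mathcal{G})$ and set $P = \sigma(M)$. I would reprove the multiplicative analog of Lemma \ref{l3.1}: for $(p,q,i,j)$ with $s(U_{p,i}) = s(U_{q,j}) =: W$, the map $\psi(\gamma) = U_{p,i}^{-1} \gamma U_{q,j}$ is a homeomorphism from $\mathcal{G}_{pq^{-1}} \cap \mathcal{G}^{r(U_{p,i})}_{r(U_{q,j})}$ onto $\mathcal{G}_e \cap \mathcal{G}^W_W$, since $c(\tau^{-1}\gamma\sigma) = c(\tau)^{-1} c(\gamma) c(\sigma) = e$ precisely when $c(\gamma) = pq^{-1}$. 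The hypothesis $\Gamma = \mathcal{P}\mathcal{P}^{-1} = \mathcal{P}^{-1}\mathcal{P}$ ensures the sets $\Omega(p,q,i,j) := U_{p,i} U_{q,j}^{-1}$ cover $\mathcal{G}$, so that with $P(p,q,i,j) := P \cap \mathcal{G}_{pq^{-1}} \cap \mathcal{G}^{r(U_{p,i})}_{r(U_{q,j})}$ we have $P = \bigcup_{\mathcal{F}_0} P(p,q,i,j)$, and a partition-of-unity argument over this cover reduces the spectrality of $M$ to showing $C_c(P(p,q,i,j)) \subseteq M$ for every index tuple.

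Next, fix $(p,q,i,j) \in \mathcal{F}_0$ with $P(p,q,i,j)$ nonempty. Transport the situation to $\mathcal{G}_e$ via the same map $\pi(a) = \chi_{U_{p,i}} a \chi_{U_{q,j}}^*$, which extends to a $\ast$-isomorphism from $A(\mathcal{G}_e \cap \mathcal{G}^W_W)$ onto $A(\mathcal{G}_{pq^{-1}} \cap \mathcal{G}^{r(U_{p,i})}_{r(U_{q,j})})$, and set $N := \{a \in A(Q) : \pi(a) \in M\}$ with $Q := \psi(P(p,q,i,j))$. Using the bimodule intertwining $\pi(\phi a) = \phi_{p,i} \pi(a)$ and $\pi(a\phi) = \pi(a) \phi_{q,j}$, which depends only on the bisection structure of $U_{p,i}, U_{q,j}$ and not on the group operation, $N$ is a diagonal bimodule of $C_r^*(\mathcal{G}_e)$. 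Since $\mathcal{G}_e$ is amenable and principal, the Muhly--Solel spectral theorem yields $N = A(\sigma(N))$, and the inclusion $\sigma(N) \subseteq Q$ is immediate.

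Finally, I would show $\sigma(N) \supseteq Q$. Given $\eta = \psi(\gamma) \in Q$, choose $a \in M$ with $a(\gamma) \neq 0$ and set $b := \chi_{U_{p,i}}^* \Phi_{pq^{-1}}(a) \chi_{U_{q,j}}$. The single step where the nonabelian setting matters is the assertion $\Phi_{pq^{-1}}(a) \in M$; this is exactly where $\delta_c$-invariance is used and it is supplied by the lemma preceding Proposition \ref{p5}, whose proof rests on the slice-map formula $\Phi_t = S_t \circ \delta_c$ together with $S_t(M \otimes_{\min} C_r^*(\Gamma)) \subseteq M$. Given this, Lemma \ref{l2.3}(ii) gives $b \in A(Q)$, $\pi(b) \in M$ (so $b \in N$), and $b(\eta) = a(\gamma) \neq 0$, whence $\eta \in \sigma(N)$ and therefore $\sigma(N) = Q$. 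Consequently $C_c(P(p,q,i,j)) = \pi(C_c(Q)) \subseteq \pi(N) \subseteq M$, and the argument concludes as in Theorem \ref{Th1}. The main obstacle is conceptual rather than technical: one must verify that the coaction version of $\Phi_t$ inherits the key projection-onto-$M$ property from $\delta_c$-invariance; once this transfers, the whole geometric reduction to $\mathcal{G}_e$ is identical to the abelian case.
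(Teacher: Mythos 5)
Your proposal is correct and takes essentially the same approach as the paper: the paper's own proof of Theorem \ref{th3} consists precisely of observing that Lemma \ref{l3.1} survives with $p-q$ replaced by $pq^{-1}$ and $\mathcal{G}_0$ by $\mathcal{G}_e$, and then rerunning the argument of Theorem \ref{Th1}, with $\delta_c$-invariance supplying $\Phi_{pq^{-1}}(M)\subseteq M$ through the slice-map lemma preceding Proposition \ref{p5}, exactly as you identify. Your write-out is a faithful expansion of the details the paper leaves implicit.
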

	
	\begin{proof}By replacing $p-q$ with $pq^{-1}$ and $\mathcal{G}_0$ with $\mathcal{G}_e$, respectively, Lemma \ref{l3.1} still holds. One can prove this theorem using a similar argument to that of Theorem \ref{Th1}. \end{proof}
	
	\subsection{Examples} In this subsections, we provide some examples. Let $X$ be a second countable, locally compact and Hausdorff space, and let $\Gamma$ be an amenable countable discrete group.
	
	\begin{example}   A partial action $\beta$ of $\Gamma$ on $X$ consists of
		\begin{enumerate}
			\item[(1)] a collection $\{U_t\}_{t\in \Gamma}$ of open subsets $U_t$ of $X$,
			\item[(2)] a collection $\{\beta_t\}_{t\in \Gamma}$ of homeomorphisms $\beta_t: U_{t^{-1}}\to U_t$, $x\to tx$ such that $U_e=X$, $\beta_e$ is the identity map $\mathrm{id}_X$ on $X$; and for all $t_1,t_2\in \Gamma$, we have $t_2(U_{(t_1t_2)^{-1}}\cap U_{t_2^{-1}})=U_{t_2}\cap U_{t_1^{-1}}$, and $(t_1t_2)x=t_1(t_2x)$ for all $x\in  U_{(t_1t_2)^{-1}}\cap U_{t_2^{-1}}$.
		\end{enumerate}

		If $U_t=X$ for all $t\in \Gamma$, then $\beta$ is a group action of $\Gamma$ on $X$ by homeomorphisms.

		The transformation groupoid $\mathcal{G}$ attached to the partial action $\beta$ is given by
		$$\mathcal{G}:=\{(tx,t,x): t\in \Gamma, x\in U_{t^{-1}}\}$$
		with source map $s(tx,t,x)=x$, range map   $r(tx,t,x)=tx$,   composition $((t_1t_2)x,t_1,t_2x)(t_2x,t_2,x)=((t_1t_2)x,t_1t_2,x)$ and inverse $(tx,t,x)^{-1}=(x,t^{-1},tx)$. We equip $\mathcal{G}$ with the subspace topology from $X\times \Gamma\times X$. Then $\mathcal{G}$ is an amenable \'{e}tale groupoid whose unit space coincides with $X$. Moreover, there is a conical continuous $1$-cocycle $c: \mathcal{G}\to \Gamma$, $(tx,t,x)\mapsto t$, whose kernel $c^{-1}(e)$ is the unit space $X$.  By Proposition \ref{p4} and Proposition \ref{p5}, we have the following corollary.
		
		\begin{corollary}  Let $\beta$ be a $($partial$)$ action of $\Gamma$ on $X$ and $M$ be a diagonal bimodule of $C_r^*(\mathcal{G})$. Then $M$ is spectral if and only if $M$ is $\alpha^c$-invariant $($respectively, $\delta_c$-invariant$)$ when $\Gamma$ is abelian $($respectively, nonabelian$)$.
		\end{corollary}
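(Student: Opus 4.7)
The plan is to reduce the statement to a direct application of Proposition \ref{p4}(ii) in the abelian case and Proposition \ref{p5}(ii) in the nonabelian case. Both propositions give, under the standing assumption that $\mathcal{G}$ satisfies the Fourier coefficients approximation property and the additional hypothesis $c^{-1}(\text{identity}) = \mathcal{G}^{(0)}$, the equivalence between spectrality of a diagonal bimodule and its invariance under $\alpha^c$ (respectively $\delta_c$). So the entire task is to verify that the transformation groupoid $\mathcal{G}$ attached to $\beta$, together with its canonical cocycle $c$, satisfies these hypotheses.

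First I would observe that $\mathcal{G}$ sits in the framework of Section 4. The paragraph preceding the corollary already records that $\mathcal{G}$ is an amenable \'{e}tale groupoid (this uses the amenability of $\Gamma$ together with the fact that the isotropy over each unit is trivial in the (partial) transformation groupoid case); hence by Theorem \ref{th0} it has the Fourier coefficients approximation property. The map $c : \mathcal{G} \to \Gamma$, $(tx,t,x)\mapsto t$, is, by construction, a continuous $1$-cocycle, so $(\mathcal{G},c)$ is a $\Gamma$-graded groupoid in the sense of Section 4.

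Next I would verify the crucial hypothesis on the kernel of $c$: directly from the definition,
$$c^{-1}(e) = \{(tx,t,x)\in\mathcal{G} : t = e\} = \{(x,e,x): x\in X\},$$
which is exactly $\mathcal{G}^{(0)}$ under the canonical identification $X \cong \mathcal{G}^{(0)}$. Thus the hypothesis $c^{-1}(e) = \mathcal{G}^{(0)}$ (respectively $c^{-1}(0) = \mathcal{G}^{(0)}$ when $\Gamma$ is abelian) needed for the converse direction in Proposition \ref{p4}(ii) and Proposition \ref{p5}(ii) is automatic in both the global and partial action cases. Applying Proposition \ref{p4}(ii) when $\Gamma$ is abelian and Proposition \ref{p5}(ii) when $\Gamma$ is nonabelian then immediately yields the stated equivalence.

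I do not expect any substantive obstacle: all the technical work has already been carried out in Section 4, and the corollary amounts to checking that transformation groupoids of (partial) actions of amenable groups fit into that framework with the identity fibre of $c$ equal to the unit space. The only point worth being explicit about in the write-up is the identification $c^{-1}(e) = \mathcal{G}^{(0)}$, which makes the converse direction of Propositions \ref{p4} and \ref{p5} directly applicable.
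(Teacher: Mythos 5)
Your proposal is correct and follows essentially the same route as the paper: the paper likewise notes that the (partial) transformation groupoid is amenable \'etale (hence has the Fourier coefficients approximation property by Theorem \ref{th0}), that $c:(tx,t,x)\mapsto t$ is a continuous $1$-cocycle with $c^{-1}(e)=\mathcal{G}^{(0)}$, and then applies Proposition \ref{p4} and Proposition \ref{p5} directly. One small correction to a parenthetical aside: the isotropy over a unit need \emph{not} be trivial for a non-free (partial) action, so that is not the reason $\mathcal{G}$ is amenable; the correct justification, recorded at the start of Section 4, is that the kernel $\mathcal{G}_e=c^{-1}(e)=\mathcal{G}^{(0)}$ and $\Gamma$ are both amenable, whence $\mathcal{G}$ is amenable by \cite[Proposition 9.3]{Sp} (or \cite[Corollary 4.5]{RW}) --- a fact your own verification that $c^{-1}(e)=\mathcal{G}^{(0)}$ already supplies.
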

	\end{example}

	\begin{example} Let $\mathcal{P}$ be a semigroup of $\Gamma$ that contains the identity $e$ and satisfies $\Gamma=\mathcal{P}\mathcal{P}^{-1}=\mathcal{P}^{-1}\mathcal{P}$. An injective (right) action $\theta$ of $\mathcal{P}$ on $X$ is defined as follows: for each $a\in \mathcal{P}$, the map $\theta_{a}: X\rightarrow X$ is a continuous, injective, open map. Moreover, the family $\{\theta_a\}_{a\in \mathcal{P}}$ satisfies the conditions $\theta_{a}\theta_{b}=\theta_{ba}$ for every $a,b \in \mathcal{P}$, and $\theta_e$ is the identity map $\mathrm{id}_X$ on $X$ (see \cite{QH}).

		Given such an injective  action, for $x\in X$, let $$Q_{x}:=\{g\in \Gamma: \exists a,b\in \mathcal{P},\, y\in X\; \text{such that} \; g=ab^{-1} \; \text{and} \; \theta_{a}(x)=\theta_{b}(y)\}.$$
		For  $x,y\in X$ and   $g\in \Gamma$, one can check that, $g\in Q_{x}$ if and only if there exists a unique element, denoted by $u(x,g)$, in $ X$ such that if $g=ab^{-1}$ for $a,b\in P$, then $\theta_{a}(x)=\theta_{b}(u(x,g))$
		Let $\mathcal{G}=\{(x,g)\in X\times \Gamma: g\in Q_x\}$ be the associated transformation groupoid. Recall that $\mathcal{G}$ is an amenable \'etale groupoid endowed with the operations: $(x,g)(y,h)=(x,gh)$ if $y=u(x,g)$, and $(x,g)^{-1}=(u(x,g),g^{-1})$, and with the relative product topology on $X\times \Gamma$. Let $c:\mathcal{G}\rightarrow \Gamma$ be the continuous $1$-cocycle defined by $c(x,g)=g$ for $(x,g)\in \mathcal{G}$. Then $c^{-1}(e)=\mathcal{G}^{(0)}$. By Proposition \ref{p4} and Proposition \ref{p5}, we have the following corollary.

		\begin{corollary} Let $\theta$ be an injective action of $\mathcal{P}$ on $X$ and $M$ be a diagonal bimodule of $C_r^*(\mathcal{G})$. Then $M$ is spectral if and only if $M$ is $\alpha^c$-invariant $($respectively, $\delta_c$-invariant$)$ when $\Gamma$ is abelian $($respectively, nonabelian$)$.
		\end{corollary}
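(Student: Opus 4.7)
The plan is to apply Proposition \ref{p4}(ii) in the abelian case and Proposition \ref{p5}(ii) in the nonabelian case. Both propositions assert that a diagonal bimodule $M$ of $C_r^*(\mathcal{G})$ is spectral if and only if it is invariant under the induced (co)action of $\Gamma$, provided that $c^{-1}(e)=\mathcal{G}^{(0)}$ and $\mathcal{G}$ has the Fourier coefficients approximation property. So the only work is to verify these two hypotheses for the transformation groupoid $\mathcal{G}$ attached to the injective action $\theta$.

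First, since $\mathcal{G}$ is an amenable \'etale groupoid, Theorem \ref{th0} implies that $C_c(P)$ is dense in $A(P)$ in the reduced norm for every nonempty open subset $P$ of $\mathcal{G}$. In particular, $\mathcal{G}$ has the Fourier coefficients approximation property, which is the standing assumption throughout Section 4 and is required for the applicability of Propositions \ref{p4} and \ref{p5}.

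Next, the identity $c^{-1}(e)=\mathcal{G}^{(0)}$ is recorded explicitly in the setup of the example, but let me indicate why it holds: by definition $c(x,g)=g$, and for any $x\in X$ the pair $(x,e)$ lies in $\mathcal{G}$ since, taking $a=b=e$ and $y=x$, we have $\theta_{e}(x)=x=\theta_{e}(x)$, whence $e\in Q_{x}$ and $u(x,e)=x$. Thus $c^{-1}(e)=\{(x,e):x\in X\}$, which is exactly the unit space under the identification $x\leftrightarrow (x,e)$.

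Finally, having verified the two hypotheses, Proposition \ref{p4}(ii) gives the equivalence $M\text{ spectral}\iff M\ \alpha^c\text{-invariant}$ when $\Gamma$ is abelian, and Proposition \ref{p5}(ii) gives the equivalence $M\text{ spectral}\iff M\ \delta_c\text{-invariant}$ when $\Gamma$ is nonabelian. Since the entire argument reduces to invoking the two propositions after checking the hypotheses, there is no genuine obstacle; the only mild subtlety is to make the verification $c^{-1}(e)=\mathcal{G}^{(0)}$ explicit from the definition of the groupoid.
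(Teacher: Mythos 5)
Your proposal is correct and follows essentially the same route as the paper: the paper derives this corollary directly from Propositions \ref{p4} and \ref{p5}, with the standing amenability of $\mathcal{G}$ (hence the Fourier coefficients approximation property, via Theorem \ref{th0}) and the identity $c^{-1}(e)=\mathcal{G}^{(0)}$ recorded in the setup of the example. Your explicit check that $e\in Q_x$ with $u(x,e)=x$, so that $c^{-1}(e)$ is exactly the unit space, is a harmless elaboration of what the paper states without proof.
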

	\end{example}

	\begin{remark} When $\Gamma$ is  weakly amenable or satisfies the Haagerup-Kraus approximation property, the transformation groupoid $\mathcal{G}$ associated to a homeomorphism action $\beta$ of $\Gamma$ on $X$  satisfies the Fourier coefficients approximation property, thus the preceeding corollary holds. Hence by Theorem \ref{th2}, a closed two-sided ideal of $C_r^*(G)$ is a dynamical ideal if and only if it is $\alpha^c$-invariant in the abelian case or $\delta^c$-invariant in the non-abelian case if and only if it is generated by $C_0(U)$ in $C_r^*(G)$ for some $\beta$-invariant open subset $U$ of $X$.

	\end{remark}
	
	\begin{example} Let $\mathcal{P}$ be a semigroup of $\Gamma$ that contains the identity $e$ and satisfies $\Gamma=\mathcal{P}\mathcal{P}^{-1}=\mathcal{P}^{-1}\mathcal{P}$. By a surjective local homeomorphism (right) action $\theta$ of $\mathcal{P}$ on $X$, we mean that for each $a\in \mathcal{P}$, the map $\theta_{a}: X\to X$ is a continuous, surjective local homeomorphism, $\theta_{a} \theta_{b}=\theta_{ba}$ for every $a,b \in \mathcal{P}$, and $\theta_e$ is the identity map $\operatorname{id}_X$ on $X$. Let $\mathcal{G}$ be the associated transformation groupoid \cite{RW}. Recall that 
		$$\mathcal{G}:=\{(x,m,y)\in X\times \Gamma\times X: \exists p,q\in \mathcal{P}, m=pq^{-1}, \theta_p(x)=\theta_q(y)\}.$$
		Then $r(x,m,y)=x$ and $s(x,m,y)=y$, where $(x,e,x)\in \mathcal{G}^{(0)}$ is identified with $x\in X$. For open subsets $U,V$ of $X$ and for $p,q\in \mathcal{P}$ such that $\theta_p: U\rightarrow \theta_p(U)$ and $\theta_q: V\rightarrow \theta_q(V)$ are homeomorphisms and $\theta_p(U)=\theta_q(V)$, let
		$$Z(U, p; q,V):=\{(x,pq^{-1},y): x\in U, y\in V, \theta_p(x)=\theta_q(y)\}.$$
		In particular, if we  let $Z(U,p):=Z(U,p; e,\theta_p(U))$, then $Z(U, p; q,V)=Z(U,p)Z(V,q)^{-1}$. Let $\mathfrak{F}$ be the family of all sets of the form $Z(U, p; q,V)$ with compact open subsets $U$ and $V$. Then $\mathfrak{F}$ forms a basis for the topology of $\mathcal{G}$, making $\mathcal{G}$ an amenable \'etale groupoid where each element in $\mathfrak{F}$ is a compact open bisection of $\mathcal{G}$. Let $c: \mathcal{G}\rightarrow \Gamma$ be the canonical continuous $1$-cocycle defined by $c(x,m,y)=m$ for $(x,m,y)\in \mathcal{G}$. Then $c^{-1}(e)=\{(x,e,y): \exists p\in \mathcal{P} \text{ such that } \theta_p(x)=\theta_p(y)\}$ is principal. As before, let $\alpha^c$ be the action of $\widehat{\Gamma}$ on $C_r^*(\mathcal{G})$ induced by $c$. From Theorem \ref{Th1}, Theorem \ref{th3}, and Theorem \ref{th0}, we have the following corollary.	
		
		\begin{corollary} Let $\theta$ be a surjective local homeomorphism action of $\mathcal{P}$ on a second countable, locally compact, Hausdorff, and totally disconnected space $X$. Let $\mathcal{G}$ be the associated transformation groupoid and $M$ be a diagonal bimodule of $C_r^*(\mathcal{G})$. Then the following statements are equivalent.
			\begin{enumerate}
				\item[\rm (i)] $M$ is spectral.
				\item[\rm (ii)]$M$ is generated as a diagonal bimodule by the partial isometries of the form $\chi_{Z(U,p;q,V)}$ that it contains.
				\item[\rm (iii)]$M$ is $\alpha^c$-invariant $($respectively, $\delta_c$-invariant$)$ when $\Gamma$ is abelian $($respectively, nonabelian$)$.

			\end{enumerate}
		\end{corollary}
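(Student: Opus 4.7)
The plan is to verify that the groupoid $\mathcal{G}$ here fits the hypotheses of Theorems \ref{Th1} and \ref{th3}, and then combine these with Theorem \ref{p1} and Theorem \ref{th0} to obtain the three-way equivalence.

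First I would check the structural hypotheses. The unit space $X = \mathcal{G}^{(0)}$ is totally disconnected by assumption; the subsemigroup $\mathcal{P}$ satisfies $\Gamma = \mathcal{P}\mathcal{P}^{-1} = \mathcal{P}^{-1}\mathcal{P}$ by hypothesis; the kernel $\mathcal{G}_e = c^{-1}(e)$ is principal by the description preceding the corollary, and it is amenable because $\mathcal{G}$ is amenable and $\mathcal{G}_e$ is a clopen subgroupoid. Since every $\theta_p$ is a local homeomorphism and $X$ admits a basis of compact open sets, each $\mathcal{G}_p$ is covered by sets of the form $Z(U,p)$ with $U\subseteq X$ compact open on which $\theta_p$ is a homeomorphism; taking $\{U_{p,i}\}$ to be such a family, the products $U_{p,i}U_{q,j}^{-1}=Z(U_i,p;q,V_j)$ lie in $\mathfrak{F}$, and since $\mathfrak{F}$ is a basis for $\mathcal{G}$, these products cover $\mathcal{G}$. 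Finally, amenability of $\mathcal{G}$ and Theorem \ref{th0} supply the Fourier coefficients approximation property. The equivalence (i) $\Leftrightarrow$ (iii) is then the direct conclusion of Theorem \ref{Th1} in the abelian case and Theorem \ref{th3} in the nonabelian case.

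For (ii) $\Rightarrow$ (i), each partial isometry $\chi_{Z(U,p;q,V)}$ in $M$ has open support equal to the bisection $Z(U,p;q,V)$. Hence $M$ is the closed linear span of elements of $M$ whose open supports are bisections, so (ii) $\Rightarrow$ (i) of Theorem \ref{p1} forces $M$ to be spectral. For (i) $\Rightarrow$ (ii), the strategy is to realise every element of $M$ as a reduced-norm limit of linear combinations of the $\chi_{Z(U,p;q,V)}\in M$. Since $M$ is spectral, Theorem \ref{th0} gives $M=A(\sigma(M))=\overline{C_c(\sigma(M))}$ in reduced norm. Each $Z=Z(U,p;q,V)\subseteq\sigma(M)$ is compact and clopen in $\mathcal{G}$, so $\chi_Z\in C_c(\sigma(M))\subseteq M$. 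Given $f\in C_c(\sigma(M))$, I would cover $\mathrm{supp}(f)$ by finitely many $Z_k\in\mathfrak{F}$ with $Z_k\subseteq\sigma(M)$ and use a partition of unity to write $f=\sum_k f_k$ with $f_k\in C_c(Z_k)$. On each compact open bisection $Z_k$, the total disconnectedness of $X$ transports through the homeomorphism $s|_{Z_k}:Z_k\to s(Z_k)$ to furnish a basis of compact clopen subsets, and each such subset is again of the form $Z(U',p_k;q_k,V')$ with $U'\subseteq U_k$, $V'\subseteq V_k$ compact open, hence belongs to $\mathfrak{F}$. Standard step-function approximation then yields $f_k$ as a sup-norm limit of finite sums $\sum_j c_{k,j}\chi_{Z_{k,j}}$ with $Z_{k,j}\in\mathfrak{F}$ contained in $Z_k\subseteq\sigma(M)$. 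By Lemma \ref{l1}(i), sup-norm convergence on $Z_k$ coincides with reduced-norm convergence in $C_r^*(\mathcal{G})$, so $f$ lies in the closed linear span of the relevant $\chi_{Z_{k,j}}\in M$, giving (ii).

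The main obstacle is the refinement step in (i) $\Rightarrow$ (ii): one must verify that every compact clopen subset of $Z(U,p;q,V)$ is again of the form $Z(U',p;q,V')$ so that it belongs to $\mathfrak{F}$, and that the approximation can be carried out in the reduced norm. Both rely crucially on total disconnectedness of $X$ together with the fact that $r$ and $s$ are homeomorphisms on a bisection, converting the reduced norm into the uniform norm and transferring the basis of compact clopen subsets of $X$ to the bisection. Once this is in place, the three equivalences assemble cleanly from Theorems \ref{p1}, \ref{th0}, \ref{Th1}, and \ref{th3}.
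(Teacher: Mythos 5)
Your overall route coincides with the paper's: the paper proves this corollary by simply invoking Theorems \ref{Th1}, \ref{th3} and \ref{th0}, and your treatment of (i) $\Leftrightarrow$ (ii) — via Theorem \ref{p1}, the basis $\mathfrak{F}$, a partition of unity, step-function approximation on compact open bisections, and Lemma \ref{l1}(i) to convert sup-norm into reduced-norm convergence — is correct and supplies details the paper leaves implicit. In particular, your observation that a compact open subset $W$ of $Z(U,p;q,V)$ is again of the form $Z(r(W),p;q,s(W))\in\mathfrak{F}$ is exactly the point needed to land on the specific partial isometries named in (ii).

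There is, however, one step in your verification of the hypotheses of Theorems \ref{Th1}/\ref{th3} that fails as stated: the claim that each $\mathcal{G}_p=c^{-1}(p)$ is covered by the sets $Z(U,p)$. Membership $(x,p,y)\in c^{-1}(p)$ only requires $\theta_a(x)=\theta_b(y)$ for \emph{some} $a,b\in\mathcal{P}$ with $ab^{-1}=p$; since the maps $\theta_b$ need not be injective, this does not force $\theta_p(x)=y$. Concretely, for the Deaconu--Renault groupoid of the full one-sided shift $T$ (with $\mathcal{P}=\mathbb{N}$), any pair with $T^2x=Ty$ but $y\neq Tx$ yields a point of $c^{-1}(1)$ lying in no $Z(U,1)$, so the graphs $Z(U,p)$ cover only a proper clopen subset of $\mathcal{G}_p$ in general. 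The repair is cheap: since $X=\mathcal{G}^{(0)}$ is totally disconnected, $\mathcal{G}$ is ample, so you may take $\{U_{p,i}\}$ to be \emph{all} compact open bisections contained in the clopen set $c^{-1}(p)$; these genuinely cover $\mathcal{G}_p$, and the products $U_{p,i}U_{q,j}^{-1}$ with matching sources still cover $\mathcal{G}$, because among them are all $Z(U,p)Z(V,q)^{-1}=Z(U,p;q,V)$, which form the basis $\mathfrak{F}$. (Alternatively, one can note that the proof of Theorem \ref{Th1} never actually uses that the $U_{p,i}$ cover $\mathcal{G}_p$ — only that they are compact open bisections contained in $\mathcal{G}_p$, so that $c$ is constant on each, and that their products cover $\mathcal{G}$; your family $\{Z(U,p)\}$ satisfies exactly this.) With either correction, your argument goes through and matches the paper's intended proof.
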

	\end{example}
	
	\begin{remark} Let $T: \mathbb{N}^d \curvearrowright X$ be an action by $d$ commuting surjective local homeomorphisms, and $\mathcal{G}$ be the associated transformation groupoid. Let $\alpha$ be the canonical gauge action of $\mathbb{T}^d$ on $C_r^*(\mathcal{G})$. The proceeding corollary and Corollary \ref{c3} indicate that the map that carries an open subset $U$ of $\mathcal{G}$ to the diagonal bimodule  $A(P)$ is a lattice isomorphisms from the lattice of open subsets of $\mathcal{G}$ to the lattice of gauge-invariant diagonal bimodules of $C_r^*(\mathcal{G})$. This is a generalization of \cite[Proposition 3.9]{BCS} from Theorem \ref{th2}.
	\end{remark}

\end{document}